\newtheorem{theorem}{Theorem}[section]
\newtheorem{definition}{Definition}
\newtheorem{corollary}{Corollary}
\newtheorem{proposition}{Proposition}
\newtheorem{lemma}{Lemma}
\newcommand{\C}{\mathcal{C}}
\renewcommand{\S}{\mathcal{S}}
\newcommand{\U}{\mathcal{U}}
\newcommand{\tz}{{\tilde{z}}}
\newcommand{\fp}{{\mathfrak{p}}}
\newcommand{\fP}{{\mathfrak{P}}}
\newcommand{\rlambda}{{\bar{\lambda}}}
\newcommand{\rmu}{{\bar{\mu}}}
\begin{document}

\begin{frontmatter}

\title{FCFS Parallel Service Systems and Matching Models}

\author[address1]{Ivo Adan}
\ead{iadan@win.tue.nl}

\author[address2]{Igor Kleiner\fnref{thanks}}
\ead{igkleiner@gmail.com}

\author[address3]{Rhonda Righter}
\ead{rrighter@ieor.berkeley.edu}

\author[address2]{Gideon Weiss\corref{mycorrespondingauthor}\fnref{thanks}}
\cortext[mycorrespondingauthor]{Corresponding author}
\ead{gweiss@stat.haifa.ac.il}

\address[address1]{Eindhoven University of Technology}
\address[address2]{Department of Statistics,  
The University of Haifa, Mount Carmel 31905, Israel}
\address[address3]{University of California at Berkeley}

\fntext[thanks]{Research supported in part by
Israel Science Foundation Grant 286/13.}

\begin{abstract}
We consider three parallel service models in which customers of several types are served by several types of servers subject to a bipartite compatibility graph, and the service policy is first come first served.  Two of the models have a fixed set of servers.  The first is a queueing model in which arriving customers are assigned to the longest idling  compatible server if available, or else queue up in a single queue, and servers that become available pick the longest waiting compatible customer, as studied by Adan and Weiss, 2014.  The second is a redundancy service model where arriving customers  split into copies that queue up at all the compatible servers, and are served in each queue on FCFS basis, and leave the system when the first  copy completes service, as studied by Gardner et al., 2016.  The third model is a matching queueing model with a random stream of arriving servers.  Arriving  customers queue in a single queue and arriving servers  match with the first compatible customer and leave immediately with the customer, or they leave without a customer.  The last model is relevant to organ transplants, to housing assignments, to adoptions and many other situations.

We study the relations between these models, and show that they are closely related to the FCFS infinite bipartite matching model, in which two infinite sequences of customers and servers of several types are matched FCFS according to a bipartite compatibility graph, as studied by  Adan et al., 2017.  We also introduce a directed  bipartite matching model in which we embed the queueing systems.  This leads to a generalization of Burke's theorem to parallel service systems.
\end{abstract}

\begin{keyword}
parallel service queueing systems; FCFS; redundancy service; infinite matching.
\end{keyword}

\end{frontmatter}


\section{Introduction}
We consider three parallel service models in which customers of several types, indexed by $c_i\in \C =\{c_1,\ldots,c_I\}$  are served by several types of servers, indexed by $s_j\in \S=\{s_1,\ldots,s_J\}$,   subject to a bipartite compatibility graph,  $\mathcal{G}=(\S,\C,\mathfrak{E})$, $\mathfrak{E} \subseteq \S\times \C$, such that $(s_j,c_i)\in\mathfrak{E}$ if customer type $c_i$ can be served by server type $s_j$.  We focus on first come first served  (FCFS) policy in all the models, i.e. customers are prioritized by their order of arrivals, and servers are prioritized by the order in which they become available.  Two of the models have a fixed set of servers, while the third model has a random stream of arriving servers.
Briefly stated the models are as follows:

\begin{compactitem}[-]
\item
{\em FCFS-ALIS Parallel Queueing Model: }  
There are $J$ servers of types $\S$ and a stream of customers of types $\C$. An arriving customer is assigned to the longest idle server which is compatible with it (ALIS - assign longest idle server) if such is available, or else he joins the queue of waiting customers.  A server that completes a service picks up the longest waiting customer which is compatible with him (FCFS), if such is available, or else he joins the queue of idle servers.  This model was  studied by Adan and Weiss \cite{adan2014skill}.
  \item
{\em A Redundancy Service Model: }  
There are $J$ servers of types $\S$, each with his own FCFS queue, 
 and a stream of arriving customers of types $\C$. An arriving customer splits upon arrival into several copies
 that join the queues of the servers which are compatible with it.  Service of a customer can then proceed simultaneously at several compatible servers.  The customer and all its copies leave the system when the first of its copies completes service.
 This model was  studied by Gardner et al. \cite{gardner2016queueing}.  
  \item
{\em A  Parallel FCFS Matching Queue: }  
There is an arrival stream of customers of types  $\C$, and an independent arrival stream of servers of types $\S$.    When a customer arrives he joins a queue of customers waiting for service.  When a server arrives he scans the queue of customers and matches with the longest waiting customer that is compatible with his type, and the matched customer then leaves the system with the server.  If the server does not find a match he leaves immediately without a match.

The matching queue model  is relevant to many types of service systems:  It can describe organ transplants, where patients are waiting to receive organs, and  donated organs arrive in a random stream, and organs are assigned to compatible recipients in FCFS order, or are lost if no compatible recipient is waiting \cite{su2005patient}.  It can also describe an adoption process, where families are waiting for available babies to be adopted (this may only be approximate since unmatched babies do not disappear).  It was used to model assignment of project houses  to families in Boston public housing, by Kaplan \cite{kaplan1984managing,kaplan1988public}.  
Another application is to call centers with inbound and outbound calls, where differently skilled agents (servers) start outbound calls if there are no waiting inbound calls that match their skill sets. Here the state would be the set of customers waiting in the queue, and would not include those in service. 
Our matching queue model, although it seems very relevant to the study of organ transplants and to various other systems, has not, to the best of our knowledge, been analyzed in any level of detail.
\end{compactitem}

We assume Poisson arrivals and exponential server dependent service times for all three models so that their evolution is Markovian and can be described by various discrete-space continuous-time Markov chains.

These models are closely related to a fourth model:
\begin{compactitem}[-]
\item
{\em The  FCFS infinite bipartite matching model:}   This was introduced in \cite{caldentey2009fcfs,adan2012exact} and studied in more detail recently by Adan, Busic, Mairesse and Weiss \cite{adan2015reversibility}.  In this model there are two infinite sequences, drawn independently, one is drawn i.i.d. from $\C$, the other from $\S$, and the two sequences are then matched FCFS according to the compatibility graph $\mathcal{G}$.  This model is much simpler than either of the above models since it does not involve arrival times and service times, and servers and customers play a completely symmetric role.  
\end{compactitem}

In this paper we explore the relations between the three service models, and their connections to the FCFS infinite matching model.  Our results here are:
\begin{compactitem}[-]
\item
The continuous-time Markov chains that describe all three service models share the same stationary distribution.  This leads the way to comparing their performance measures.
\item
We note that the redundancy service model and the matching queue are equivalent, in that they share the same continuous-time Markov chain.  
\item
We compare the performance of the Redundancy Service model and the FCFS-ALIS model, and point out when either should be preferred. 
\item
In particular we study their performance for the `N'-system, and obtain sharp stochastic bounds on the difference in the number in queue for each policy. 
\item
We introduce a new discrete FCFS infinite matching model, which we call the FCFS infinite directed  matching model, that is similar to the model of \cite{adan2015reversibility}.  
\item
We derive properties of this new FCFS infinite directed matching model. 
\item
We embed the three service models in the infinite directed bipartite matching model.
\item
We obtain a version of Burke's Theorem for the redundancy service and for the matching queue systems.
\end{compactitem}

The rest of the paper is structured as follows:  In Section \ref{sec.servicemodels} we describe the three  models, and in Section \ref{sec.relations} we compare their performance.  In Section \ref{sec.N-model} we study performance of the `N'-system under FCFS-ALIS and under Redundancy Service, and present computational and simulation results.  
 In Section \ref{sec.infmatching} we describe the relevant properties of the FCFS infinite bipartite matching model.   In Section \ref{sec.embedded} we introduce the new FCFS infinite directed matching model, and derive properties of the process.
  In Section \ref{sec.surprise} we show how to  embed the three service models in this new matching model, and discover some surprising consequences of this embedding.
We complete the proofs of our results in appendices.

\subsection*{Notation}
We let: $\S(c_i)$ denote the subset of server types compatible with $c_i$, $\C(s_j)$ denote 
the subset of customer types compatible with $s_j$.   For  $C \subset \C$, $S\subset \S$ we let $\S(C) =\bigcup_{c_i\in C} \S(c_i)$, $\C(S) =\bigcup_{s_j\in S} \C(s_j)$,  and denote by $\U(S) = (\C(S^c))^c$ those customer types that are compatible only with server types in $S$.

We associate with $c_i$ a rate $\lambda_{c_i}$, and with $s_j$ a rate $\mu_{s_j}$, these are rates for exponential distributions.   We also let $\rlambda =  \sum_{i=1}^I \lambda_{c_i}$, $\rmu = \sum_{j=1}^J \mu_{s_j}$.
For  subsets $C \subset \C$, $S\subset \S$ we let $\lambda_C=\sum_{c_i \in C} \lambda_{c_i}$,
$\mu_S=\sum_{s_j \in S} \mu_{s_j}$.

In what follows we will denote quantities related to the FCFS-ALIS model by a superscript $^q$,
those related to the Redundancy Service model by a superscript $^r$,  those related to the Matching model by a superscript $^m$.  In addition, we denote quantities related to the FCFS infinite bipartite matching model by a superscript $^\infty$, and those related to the FCFS infinite directed  matching model by a superscript $^{\downarrow\infty}$.

\section{The Service Models}
\label{sec.servicemodels}
\subsection{A stability condition}
\label{sec.stabilitycondition}
\begin{theorem}
All three service models are stable, in the sense that Markov chains describing them are ergodic, if and only if the following condition holds:
\begin{equation}
\label{eqn.stability}
\lambda_C < \mu_{S(C)},  \qquad \mbox{for every }  C\subseteq \C.
\end{equation}
\end{theorem}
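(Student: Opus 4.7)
The plan is to prove the necessity and sufficiency directions separately.

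For necessity, I would argue by flow conservation. Fix any $C \subseteq \C$ and consider customers whose type lies in $C$. In each of the three models a customer of type $c_i \in C$ can only depart by being served by, or matched with, a server whose type lies in $\S(C)$, so the aggregate long-run departure rate of $C$-customers is bounded above by $\mu_{\S(C)}$. Since $C$-customers arrive as Poisson at aggregate rate $\lambda_C$, applying the strong law of large numbers to the Poisson arrival process and the (potential) service completions of $\S(C)$-servers shows that $\lambda_C \ge \mu_{\S(C)}$ forces the number of $C$-customers in the system to diverge almost surely, contradicting positive recurrence. This argument is uniform across the three models and gives the ``only if'' direction.

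For sufficiency I would treat the three models differently. The FCFS-ALIS chain is handled directly by the main result of \cite{adan2014skill}, which constructs an explicit product-form stationary distribution under precisely condition \eqref{eqn.stability}, and hence proves ergodicity. For the redundancy service model I would appeal to its equivalence with the matching queue (noted in the introduction and established in Section \ref{sec.relations}), so that ergodicity of one transfers to the other. For the matching queue itself, whose natural state is the ordered list of waiting customer types, I would attempt a Foster--Lyapunov argument with a weighted-count Lyapunov function $V(x)=\sum_{c_i} w_{c_i}\, x_{c_i}$, choosing positive weights $w_{c_i}$ so that \eqref{eqn.stability} yields strictly negative mean drift outside a finite set. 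The existence of such weights is the linear-programming dual of the Hall-type inequality \eqref{eqn.stability} and can be obtained by a small LP argument.

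The main obstacle will be making the drift calculation for the matching queue rigorous, because when a server of type $s_j$ arrives the customer removed is the \emph{oldest} compatible one, not a random compatible one, so the drift of any count-based $V$ depends on the positions of the different types in the queue rather than on their multiplicities alone. To sidestep this I would in fact prefer to use the main structural contribution of the paper: once the three chains are shown to share a common stationary distribution (developed in the subsequent sections), ergodicity of FCFS-ALIS under \eqref{eqn.stability} --- established by \cite{adan2014skill} --- immediately forces the shared stationary measure to be a genuine probability distribution, whence the other two chains are ergodic as well. This reduces the present theorem to the structural identities that the paper is going to prove anyway.
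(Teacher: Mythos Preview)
The paper's own proof is a single line: it forward-references the explicit product-form invariant measures derived later (Theorems~\ref{thm.adanweiss}--\ref{thm.matchqueue}) and observes that these measures are summable if and only if \eqref{eqn.stability} holds. Both directions come out at once: summability gives a proper stationary distribution (hence ergodicity of the irreducible chain), and failure of \eqref{eqn.stability} makes the unique-up-to-scaling invariant measure diverge, ruling out positive recurrence.

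Your route is genuinely different. Your necessity argument via flow conservation and the SLLN is perfectly sound and more probabilistic than the paper's algebraic ``the series diverges'' observation; either is acceptable. For sufficiency, citing \cite{adan2014skill} for FCFS--ALIS and invoking the sample-path equivalence of the redundancy and matching queues is fine, but your final fallback contains an imprecision worth flagging: the three chains do \emph{not} share a stationary distribution, because $X^q$ lives on a strictly larger state space (waiting customers \emph{and} ordered idle servers) than $X^r$ or $X^m$. What is true is that each chain has its own product-form invariant measure, and the summability condition for each reduces to the same geometric-type series governed by \eqref{eqn.stability}; Corollary~\ref{thm.fcfsalisbusy} merely says that the $X^q$-measure, conditioned on all servers busy, \emph{coincides in form} with the $X^r$-measure. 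So ergodicity does not transfer by a ``shared measure'' argument but by the coincidence of summability conditions --- and once you have identified those measures, you have essentially carried out the paper's one-line proof, making the detour through FCFS--ALIS unnecessary. Your Foster--Lyapunov sketch is, as you yourself note, obstructed by the position-dependence of departures and is best abandoned in favour of the explicit invariant measure for $X^m$ (Theorem~\ref{thm.matchqueue}), whose proof is identical to that of \cite{gardner2016queueing}.
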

\begin{proof}  
This follows from the form of the solutions to the balance equations, that converge if and only if (\ref{eqn.stability}) holds.
\end{proof}
Figure \ref{fig.compatiiblity}  illustrates the compatibility graph for an example we will use throughout  the paper.  In this example there are 3 types of customers and 3 types of servers, customers of type $c_2$  (type $c_3$) can only be served by server of type $s_2$ (type $s_3$), while customers of type $c_1$ can be served by all types of servers.  This model is referred to in the literature as the `W'-model.
\begin{figure}[htbp]
   \centering
   \includegraphics[width=1.1in]{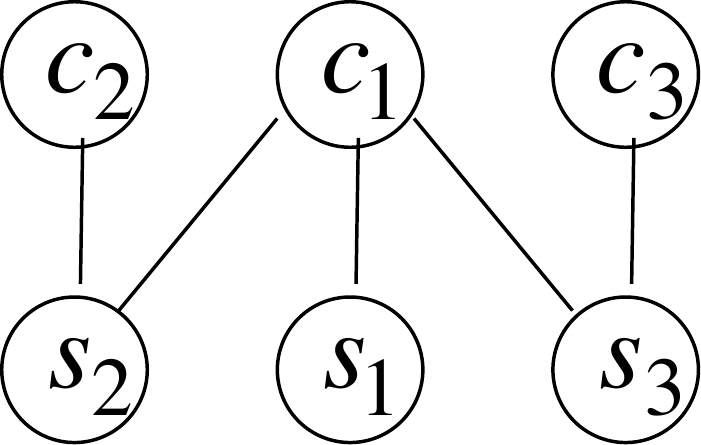} 
   \caption{Compatibility graph for customer and server types}
   \label{fig.compatiiblity}
\end{figure}

The stability condition for this example is: 
\[
\lambda_2 < \mu_2, \quad \lambda_3<\mu_3,  \quad \rlambda < \rmu.
\]

\subsection{The FCFS-ALIS parallel queueing model}
\label{sec.fcfsalis}
Customers arrive in independent Poisson streams, with rate $\lambda_{c_i}$ for type $c_i$.  There are $J$ servers of types $\{s_1,\ldots,s_J\}$, and service by server $s_j$ is exponential  with rate $\mu_{s_j}$.  The service policy as described in the introduction is FCFS-ALIS.  Figure \ref{fig.fcfsalis}  illustrates a possible state for our example.  
\begin{figure}[htbp]
   \centering
   \includegraphics[width=3.0in]{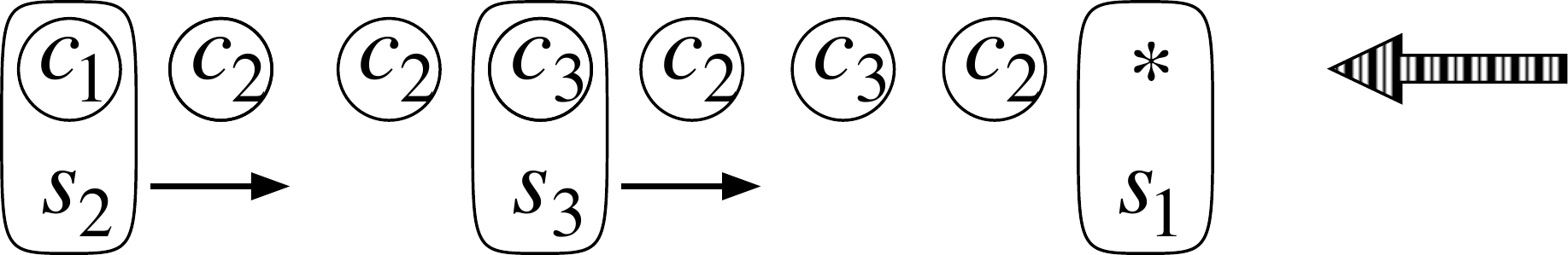} 
   \caption{A current state under FCFS-ALIS}
   \label{fig.fcfsalis}
\end{figure}
In this figure all customers in the system are displayed in order of arrival, with earlier arrivals more to the left.  Customers in service are shown together with their server.  The oldest customer in the system is of type $c_1$ and it is served by server $s_2$, server $s_3$ is serving a customer of type $c_3$ after skipping two incompatible customers of type $c_2$.  Server $s_1$ is idle.  In the future, new customers will  arrive from the right and join the end of the queue, with or without a compatible server, and on completion of service servers move to the right and scan waiting customers until they found a compatible customer or join the end of the idle servers queue.

In \cite{adan2014skill} the system  is described by the process $Y^q(t)=(S_1,n_1$, $\ldots,S_i,n_i,S_{i+1},\ldots,S_J)$ where $S_1,\ldots,S_J$ is a permutation of the servers, servers $S_1,\ldots,S_i$ are busy with $S_1$  serving the oldest customer in the system, $S_2$ has skipped $n_1$ customers and is serving the second oldest customer currently in service, and so on.  $n_j$ is the number of skipped customers between $S_j$ and $S_{j+1}$. The remaining servers, $S_{i+1},\ldots,S_J$ are idle, ordered by length of time they were idle, with $S_J$ the longest idle.  This describes the system at time $t$.  They proved:
\begin{theorem}[Adan and Weiss \cite{adan2014skill}]
\label{thm.adanweiss}
The process $Y^q(t)$ is a continuous-time discrete state Markov chain.  It is ergodic if and only if (\ref{eqn.stability}) holds.  Its stationary distribution is given, up to a normalizing constant, by 
\begin{eqnarray}
\nonumber
&P^q(S_1,n_1,\ldots,S_i,n_i,S_{i+1},\ldots,S_J) \propto 
\prod_{j=1}^i \frac{(\lambda_{\U(\{S_1,\ldots,S_j\})})^{n_j}}{(\mu_{\{S_1,\ldots,S_j\}})^{n_j+1}}  
\\
\label{eqn.adanweiss}
&\times \prod_{j=i+1}^J \frac{1}{\lambda_{\C(\{S_j,\ldots,S_J\})}}
\end{eqnarray}
\end{theorem}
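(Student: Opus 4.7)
The plan is to follow the strategy used by Adan and Weiss, namely to verify directly that the candidate product form (\ref{eqn.adanweiss}) satisfies the balance equations of the Markov chain $Y^q(t)$, and then to show that it is summable if and only if the Hall-type condition (\ref{eqn.stability}) holds. First I would check that $Y^q(t)$ is indeed a continuous-time Markov chain on the countable state space described: by the memorylessness of the exponential arrivals and services, together with the fact that the tuple $(S_1,n_1,\ldots,S_i,n_i,S_{i+1},\ldots,S_J)$ records every piece of information needed to determine the next transition (which server is busy with which, who is oldest, which idle server is longest idle, and how many customers each busy server skipped). I would then enumerate the transitions out of and into a generic state: arrivals of type $c$ at rate $\lambda_c$, which either (i) extend the skip count $n_j$ of the most senior busy server compatible with $c$, (ii) activate the longest-idle compatible server $S_k$ (moving it to the busy prefix) if every busy server is incompatible with $c$, and service completions of $S_j$, $1\le j\le i$, at rate $\mu_{S_j}$, which either remove $S_j$ to become the newest idle server (if it has not skipped anyone) or promote $S_j$ to take the oldest compatible skipped customer.

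Next I would verify the product form by checking a partial balance rather than the full global balance. Concretely, I would group the transitions by the ``boundary'' between busy and idle servers, and check that in every state the total rate out of the busy-prefix class equals the total rate in. The ratio form of the factors $\lambda_{\U(\{S_1,\ldots,S_j\})}^{n_j}/\mu_{\{S_1,\ldots,S_j\}}^{n_j+1}$ is tailored exactly for this: stepping $n_j\to n_j+1$ multiplies the stationary weight by $\lambda_{\U(\{S_1,\ldots,S_j\})}/\mu_{\{S_1,\ldots,S_j\}}$, which matches the ratio of the rate at which such a skip happens to the rate at which the server block $\{S_1,\ldots,S_j\}$ finishes some service. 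Similarly, the idle-server factors $1/\lambda_{\C(\{S_j,\ldots,S_J\})}$ are designed so that activating $S_k$ and moving it into the busy prefix cancels with the corresponding arrival rate. Summing the contributions of all compatible arrival types to a given transition in then gives the clean cancellation.

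For ergodicity, irreducibility on the communicating class containing the empty queue is routine (any state can be reached by a suitable sequence of arrivals and completions, and any state returns to empty under a sufficiently long idle period). Thus ergodicity is equivalent to summability of the right-hand side of (\ref{eqn.adanweiss}). Summing the geometric series in each $n_j$ requires $\lambda_{\U(\{S_1,\ldots,S_j\})} < \mu_{\{S_1,\ldots,S_j\}}$ for every prefix of every permutation of $\S$, i.e.\ $\lambda_{\U(S)}<\mu_S$ for every $S\subseteq\S$. A standard bipartite duality (Hall/max-flow) argument shows that this is equivalent to $\lambda_C<\mu_{\S(C)}$ for every $C\subseteq\C$, which is precisely (\ref{eqn.stability}); since $\U(S)=(\C(S^c))^c$, subadditivity on the other side finishes the equivalence.

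The main obstacle is the verification step in the second paragraph: the arrival transitions are genuinely delicate because an arriving customer of a given type can either extend a skip count at one of several possible positions along the busy prefix, or activate any one of several idle servers, depending on the compatibility graph. Keeping track of which customer types contribute to each transition, and checking that the sums of the arrival rates $\lambda_c$ over those types aggregate correctly into the factors $\lambda_{\U(\{S_1,\ldots,S_j\})}$ and $\lambda_{\C(\{S_j,\ldots,S_J\})}$ that appear in the product form, is the combinatorial heart of the argument. The cleanest way I would carry this out is to identify, for each fixed busy-idle boundary, the partition of arrival types induced by ``the leftmost busy server compatible with $c$'' or ``the rightmost idle server compatible with $c$'', and then check that the partial-balance identities across that boundary reduce, term by term, to the required ratios.
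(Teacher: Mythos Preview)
First, note that the paper does not itself prove this theorem: it is quoted as a result of Adan and Weiss \cite{adan2014skill} (and \cite{visschers2012product}) without proof.  The paper's own partial-balance verification is carried out instead for the alternative process $X^q(t)$ in Theorem~\ref{thm.fcfsalis} and Appendix~\ref{sec.partialbalance}.  So your overall strategy --- verify the product form by partial balance, then check summability and translate the geometric-series condition $\lambda_{\U(S)}<\mu_S$ into (\ref{eqn.stability}) via the bipartite duality --- is exactly the approach of the cited reference and of this paper's companion proof.

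However, your description of the arrival transitions is wrong, and this would derail the verification if carried out as written.  An arriving customer of type $c$ does \emph{not} ``extend the skip count $n_j$ of the most senior busy server compatible with $c$''; a new arrival always joins the \emph{end} of the queue, so only the last skip count $n_i$ can increase on an arrival, and this happens precisely when $c$ is incompatible with every \emph{idle} server, i.e.\ $c\in\U(\{S_1,\ldots,S_i\})$.  Likewise, the condition for your case (ii) is not that ``every busy server is incompatible with $c$'' but rather that some idle server \emph{is} compatible with $c$; under ALIS the customer is then assigned to the longest-idle compatible $S_k$, which becomes the new $(i{+}1)$th busy server with skip count $0$.  The partition of arrival types you anticipate is real, but it lives on the idle side (which idle server is longest-idle-and-compatible determines the new idle suffix), not on the busy side.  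The service-completion transitions are also more involved than you indicate: when $S_j$ finishes and scans forward it may pass several of the intermediate blocks $n_j,n_{j+1},\ldots$ before finding a compatible customer (or becoming idle), and the resulting merging of blocks is where the combinatorics actually sits.  Once the transitions are written correctly, the partial-balance and summability arguments you sketch do go through.
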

Adan and Weiss \cite{adan2014skill} also calculated the normalizing constant.

We introduce an alternative process to describe the system,  $X^q(t) = (c^1,c^2,\ldots,c^L,s^1,\ldots,s^K)$ where $c^\ell$ is the random type of the $\ell$th oldest customer  in the system that is waiting and has not started service yet, and $s^k$  is the type of the $k$th longest idling server in the system, all this at time $t$.  Note that $L$, the number of waiting customers corresponds to $n_1+\cdots+n_i$ of $Y^q(t)$, and can take any value $\ge 0$, while $s^1,\ldots,s^K$ correspond to $S_J,\ldots,S_{i+1}$ of $Y^q(t)$, which are the ordered subset of idle servers, with no replications, so that $K\le J$.
We then have:
\begin{theorem}
\label{thm.fcfsalis}
The process $X^q(t)$ is a continuous-time discrete state Markov chain.  It is ergodic if and only if the stability condition (\ref{eqn.stability}) holds.  Its stationary distribution is given, up to a normalizing constant, by:
\begin{eqnarray}
\nonumber
& P^q(c^1,c^2,\ldots,c^L,s^1,\ldots,s^K) \propto 
\prod_{\ell=1}^L \frac{\lambda_{c^\ell}}{\mu_{\S(\{c^1,\ldots,c^\ell\})}}  \\
\label{eqn.fcfsalis} 
&\times \prod_{k=1}^K \frac{\mu_{s^k}}{\lambda_{\C(\{s^1,\ldots,s^k\})}} 
\end{eqnarray}
\end{theorem}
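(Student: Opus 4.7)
The plan is to deduce Theorem~\ref{thm.fcfsalis} from Theorem~\ref{thm.adanweiss} by identifying $X^q(t)$ as a deterministic function of a Markov chain that refines $Y^q(t)$ by recording the types of the skipped customers. The Markov property of $X^q(t)$ itself is immediate: from $(c^1,\ldots,c^L,s^1,\ldots,s^K)$, the set of busy servers is the known complement $\S\setminus\{s^1,\ldots,s^K\}$ so the total completion rate is a function of the state, and both the ALIS rule on arrivals and the FCFS pickup rule on completions act deterministically on the state. Ergodicity under (\ref{eqn.stability}) is inherited from Theorem~\ref{thm.adanweiss}, since $X^q(t)$ and $Y^q(t)$ are two state descriptors of the same underlying physical system.

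For the stationary distribution, let $W^q(t)$ denote the refined Markov chain that extends $Y^q(t)$ by recording, in each of the $i$ blocks, the ordered list of types of the $n_j$ skipped customers; both $Y^q(t)$ and $X^q(t)$ are then deterministic functions of $W^q(t)$. By Poisson thinning of the independent Poisson arrival streams, conditional on $Y^q$ the skipped types in the $j$-th block are i.i.d.\ with density proportional to $\lambda_c$ on $c\in\U(\{S_1,\ldots,S_j\})$. Multiplying (\ref{eqn.adanweiss}) by these conditional densities, the factors $\lambda_{\U(\{S_1,\ldots,S_j\})}^{n_j}$ cancel exactly and one obtains
\[
P^q(W^q)\;\propto\;\prod_{\ell=1}^L\lambda_{c^\ell}\cdot\prod_{j=1}^i\mu_{\{S_1,\ldots,S_j\}}^{-(n_j+1)}\cdot\prod_{k=1}^K\lambda_{\C(\{s^1,\ldots,s^k\})}^{-1}.
\]
The $X^q$-marginal is the sum of $P^q(W^q)$ over admissible orderings $(S_1,\ldots,S_i)$ of $B=\S\setminus\{s^1,\ldots,s^K\}$ and non-negative compositions $(n_1,\ldots,n_i)$ of $L$ satisfying the consistency constraint $\S(c^\ell)\subseteq\{S_1,\ldots,S_{\tau_\ell}\}$, where $\tau_\ell$ denotes the block containing position $\ell$.

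The main obstacle is the combinatorial identity
\[
\sum_{(S_1,\ldots,S_i),(n_j)\text{ valid}}\;\prod_{j=1}^i\mu_{\{S_1,\ldots,S_j\}}^{-(n_j+1)}\;=\;\frac{1}{\prod_{s\in B}\mu_s}\cdot\prod_{\ell=1}^L\frac{1}{\mu_{\S(\{c^1,\ldots,c^\ell\})}},
\]
which (after absorbing the state-independent factor $\prod_{j=1}^J\mu_{s_j}$ into the normalizing constant) converts the $W^q$-marginal into (\ref{eqn.fcfsalis}). I would prove this identity by induction on $L$: peel off $c^L$, observe that its block index $\tau_L$ must satisfy $\{S_1,\ldots,S_{\tau_L}\}\supseteq\S(\{c^1,\ldots,c^L\})$, and exploit the geometric-sum structure over the partition position between the $\tau_L$-th and $(\tau_L+1)$-th blocks to isolate the required new factor $\mu_{\S(\{c^1,\ldots,c^L\})}^{-1}$; two small test cases (the all-idle state and an N-model state with one waiting customer) confirm that the constant of proportionality really is $1/\prod_j\mu_{s_j}$.

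A direct alternative, bypassing both Theorem~\ref{thm.adanweiss} and the combinatorial identity, is to verify the global balance equations for (\ref{eqn.fcfsalis}) directly. The recursive structure of the right-hand side — appending $c^{L+1}$ multiplies it by $\lambda_{c^{L+1}}/\mu_{\S(\{c^1,\ldots,c^{L+1}\})}$ and appending $s^{K+1}$ multiplies it by $\mu_{s^{K+1}}/\lambda_{\C(\{s^1,\ldots,s^{K+1}\})}$ — suggests that partial balance along the tail indices $L$ and $K$ should close. The delicate point is that an arriving customer may match with an idle server at any interior position, not just the tail, so a purely tail-based cut does not balance one-for-one and a more refined pairing of inflow and outflow transitions must be found.
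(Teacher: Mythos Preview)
Your ``direct alternative'' is precisely what the paper does, and the pairing you flag as delicate is exactly where the work lies. The paper lists eight transition types and pairs them as follows: the flux out by a service completion that picks up a waiting customer balances the flux in by the arrival of $c^L$ (tail-of-customers); symmetrically, the flux out by an arrival that grabs an idle server balances the flux in by $s^K$ becoming idle (tail-of-servers). These two are one-line verifications. The non-trivial pairing is the one you anticipate: the flux out by a type-$c_i$ arrival that joins the queue is balanced against the \emph{sum over all positions} $\ell=0,\ldots,L$ of the flux in from states $(c^1,\ldots,c^\ell,c_i,c^{\ell+1},\ldots,c^L,s^1,\ldots,s^K)$ via a completion that picks up this $c_i$. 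This reduces to the identity
\[
\prod_{\ell=1}^L \frac{1}{\mu_{\S(\{c^1,\ldots,c^\ell\})}}
=\sum_{\ell=0}^L
\Bigl(\prod_{j=1}^\ell \frac{1}{\mu_{\S(\{c^1,\ldots,c^j\})}}\Bigr)
\frac{\mu_{\S(c_i)\setminus\S(\{c^1,\ldots,c^\ell\})}}{\mu_{\S(\{c_i,c^1,\ldots,c^\ell\})}}
\prod_{j=\ell+1}^L \frac{1}{\mu_{\S(\{c_i,c^1,\ldots,c^j\})}},
\]
proved by induction on $L$; a symmetric identity handles the pairing of the flux out by server $s_j$ going idle with the sum over interior idle positions for the incoming arrival-match. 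So the paper's proof is self-contained and does not appeal to Theorem~\ref{thm.adanweiss}.

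Your main route---refine $Y^q$ to $W^q$, invoke Theorem~\ref{thm.adanweiss}, then marginalize via a combinatorial identity---is a genuinely different argument, and the identity you write down is correct (its $L=0$ case is the classical ``sum over orderings of a set of exponentials'' formula, and the inductive step is close in spirit to the paper's identity above). The one soft spot is the clause ``by Poisson thinning \ldots\ the skipped types in the $j$-th block are i.i.d.\ with density proportional to $\lambda_c$ on $\U(\{S_1,\ldots,S_j\})$.'' This is true in stationarity, but it is not an immediate consequence of thinning: the composition of a block is shaped by the history of which servers swept through it, not just by the arrival stream, so the conditional law of the types given $Y^q$ is a statement about the stationary law of the refined chain $W^q$, and that is exactly what you are trying to establish. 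You would need either to verify balance for $W^q$ directly (which is essentially the paper's computation with extra bookkeeping) or to give a separate exchangeability/coupling argument. Either way the step is not free, and once you pay for it your approach costs about the same as the paper's while additionally importing Theorem~\ref{thm.adanweiss}.
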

The proof of this theorem is by partial balance, we present it in   \ref{sec.partialbalance}.
In particular, the following corollary is immediate:
\begin{corollary}
\label{thm.fcfsalisbusy}
The process $X^q(t)$  conditional on the event that all servers are busy, has the stationary distribution given up to a normalizing constant by:
\begin{eqnarray}
\label{eqn.fcfsalisbusy} 
& P^q(c^1,c^2,\ldots,c^L\,|\, \mbox{all busy}) \propto 
\prod_{\ell=1}^L \frac{\lambda_{c^\ell}}{\mu_{\S(\{c^1,\ldots,c^\ell\})}}  
\end{eqnarray}
\end{corollary}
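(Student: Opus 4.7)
My plan is to derive the corollary as an immediate specialization of Theorem \ref{thm.fcfsalis}. The first step is to identify the event `all servers busy' with the set of states of $X^q(t) = (c^1,\ldots,c^L, s^1,\ldots,s^K)$ having $K=0$: since $s^1,\ldots,s^K$ enumerates exactly the idle servers present in the system, `no idle servers' is equivalent to `$K=0$'.

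The second step is to evaluate the stationary product (\ref{eqn.fcfsalis}) on this slice. The idle-server factor $\prod_{k=1}^K \mu_{s^k}/\lambda_{\C(\{s^1,\ldots,s^k\})}$ is an empty product when $K=0$ and therefore equals $1$, so the joint stationary mass assigned by (\ref{eqn.fcfsalis}) to a state of the form $(c^1,\ldots,c^L)$ with no idle servers reduces to the customer factor $\prod_{\ell=1}^L \lambda_{c^\ell}/\mu_{\S(\{c^1,\ldots,c^\ell\})}$.

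The third step is to pass to the conditional distribution: dividing by the total stationary mass $P^q(\text{all busy})$, which is a positive constant independent of $(c^1,\ldots,c^L)$ under the stability condition (\ref{eqn.stability}), preserves the proportionality and yields (\ref{eqn.fcfsalisbusy}) up to the normalizing constant, as claimed.

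I do not anticipate any genuine obstacle; the corollary is essentially a one-line consequence of Theorem \ref{thm.fcfsalis}. The only items worth flagging are the empty-product convention for the idle-server factor when no servers are idle, and the positivity of $P^q(\text{all busy})$ as a normalizer, both of which are immediate under (\ref{eqn.stability}).
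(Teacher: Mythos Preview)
Your proposal is correct and matches the paper's approach exactly: the paper states that the corollary is immediate from Theorem~\ref{thm.fcfsalis}, and your argument---identifying ``all busy'' with $K=0$, using the empty-product convention for the idle-server factor, and then conditioning---is precisely the intended one-line derivation.
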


\subsection{The redundancy service model}
\label{sec.redundancy}
There are servers $s_1,\ldots,s_J$, and each of them has his own FCFS queue of compatible customers, and service by server $s_j$ is exponential with rate $\mu_{s_j}$.  Customers arrive in independent Poisson streams, with rate $\lambda_{c_i}$ for customers of type $c_i$.   Each arriving customer, upon arrival, splits into copies of the same type, and one copy joins the queue of each of the servers with which it is compatible.  Service of a customer can then be performed at several compatible servers simultaneously.  The customer departs from the system, with all its copies, at the instant at which service of one of its copies is complete.  

Figure \ref{fig.redundant}  illustrates a possible state for our example.  
\begin{figure}[htbp]
   \centering
   \includegraphics[width=3.10in]{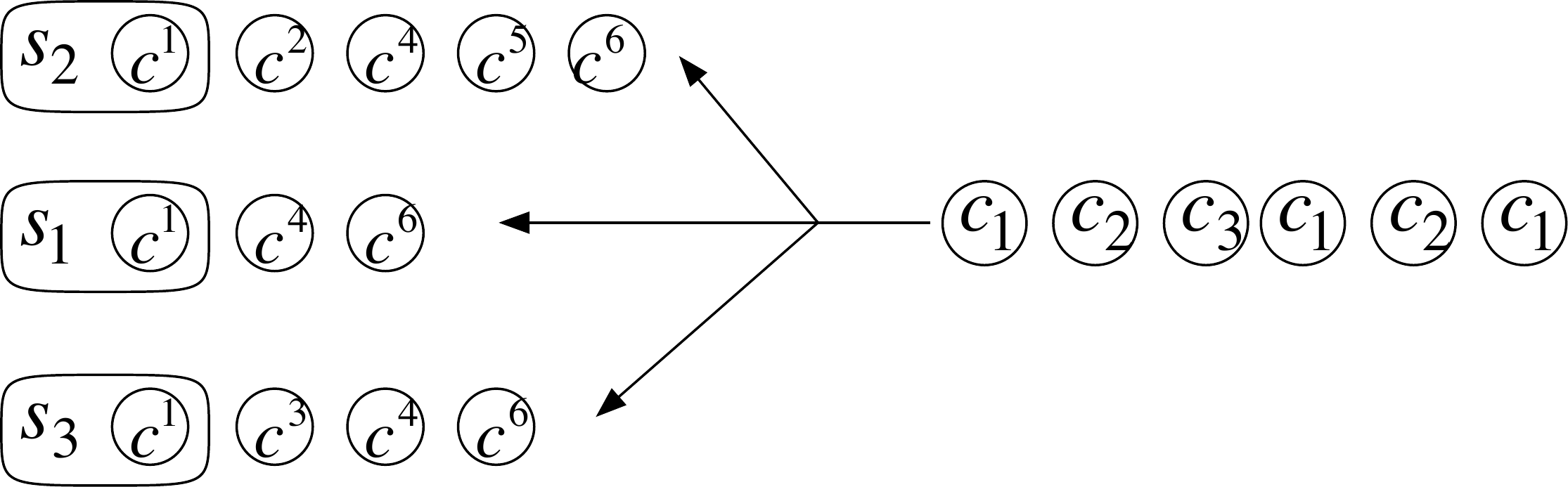} 
   \caption{A current state with redundant queueing}
   \label{fig.redundant}
\end{figure}
In it we display the list of customer types, in order of arrival, on the right side, and on the left side are the servers and their queues.  The first customer, $c^1$ (where the superscript $^1$ indicates his palace in the sequence of customers in the system) is of type $c_1$, and is currently being served simultaneously be all three servers.  The second and third customers are of types $c_2$ and $c_3$ and queue up for servers $s_2,s_3$ respectively.  The fourth and sixth customer, $c^4,c^6$ are again of type $c_1$ and queue up at all  three servers.

Gardner et al. \cite{gardner2016queueing}  have studied this system and defined the following process to describe it:  $X^r(t) = (c^1,\ldots,c^L)$, where $c^1,\ldots,c^L$ are the types of all the customers in the system at time $t$, ordered by their arrival times, with $c^1$ the oldest.  They have shown:
\begin{theorem}[Gardner, Zbarsky, Doroudi, Harchol-Balter, Hyytia and Scheller-Wolf \cite{gardner2016queueing}]
\label{thm.redundancy}
The process $X^r(t)$ is a continuous-time discrete state Markov chain.  It is ergodic if and only if  the stability condition (\ref{eqn.stability}) holds.  Its stationary distribution is given, up to a normalizing constant, by:
\begin{eqnarray}
\label{eqn.redundancy}
&P^r(c^1,c^2,\ldots,c^L) \propto 
\prod_{\ell=1}^L \frac{\lambda_{c^\ell}}{\mu_{\S(\{c^1,\ldots,c^\ell\})}}  
\end{eqnarray}
\end{theorem}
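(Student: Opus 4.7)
The plan is to verify the proposed product form by checking partial balance equations for the Markov chain $X^r(t)$. First I would fix the transitions explicitly: from state $x=(c^1,\ldots,c^L)$ the chain moves to $(c^1,\ldots,c^L,c)$ at rate $\lambda_c$ on a type-$c$ arrival, and, for each busy server $s_j\in\S(\{c^1,\ldots,c^L\})$, at rate $\mu_{s_j}$ it deletes the entry at position $\ell^\ast(s_j,x)=\min\{\ell:s_j\in\S(c^\ell)\}$. Ergodicity under (\ref{eqn.stability}) then reduces, as in Theorem \ref{thm.fcfsalis}, to summability of the right-hand side of (\ref{eqn.redundancy}), so the core task is stationarity.

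Rather than tackle global balance head-on, I would split it into two partial balance equations. The first equates the total service rate out of $x$ with the arrival rate in from truncating the last customer,
\begin{equation*}
\mu_{\S(\{c^1,\ldots,c^L\})}\,P^r(x) \;=\; \lambda_{c^L}\,P^r(c^1,\ldots,c^{L-1}),
\end{equation*}
which is immediate from the product form by cancellation of the first $L-1$ factors. The second equates the total arrival rate out with the total service rate in,
\begin{equation*}
\rlambda\,P^r(x) \;=\; \sum_{\ell=1}^{L+1}\sum_{c'\in\C}\sum_{s_j\in\S(c')\setminus\S(\{c^1,\ldots,c^{\ell-1}\})} \mu_{s_j}\,P^r(y_{\ell,c'}),
\end{equation*}
where $y_{\ell,c'}=(c^1,\ldots,c^{\ell-1},c',c^\ell,\ldots,c^L)$ is the unique predecessor from which a completion of $s_j$ (who serves $c'$ in $y_{\ell,c'}$) yields $x$. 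Writing $A_k=\S(\{c^1,\ldots,c^k\})$ (with $A_0=\emptyset$) and $B=\S(c')$, the product form telescopes into
\begin{equation*}
\frac{P^r(y_{\ell,c'})}{P^r(x)} \;=\; \frac{\lambda_{c'}}{\mu_{A_{\ell-1}\cup B}}\prod_{k=\ell}^{L}\frac{\mu_{A_k}}{\mu_{A_k\cup B}},
\end{equation*}
and the innermost sum over $s_j$ contributes the factor $\mu_{B\setminus A_{\ell-1}}$. After dividing by $P^r(x)$ and using $\sum_{c'}\lambda_{c'}=\rlambda$, the second partial balance reduces to the claim that, for every $c'\in\C$,
\begin{equation}
\sum_{\ell=1}^{L+1}\frac{\mu_{B\setminus A_{\ell-1}}}{\mu_{A_{\ell-1}\cup B}}\prod_{k=\ell}^{L}\frac{\mu_{A_k}}{\mu_{A_k\cup B}} \;=\; 1.
\label{eqn.combid}
\end{equation}

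The main obstacle is establishing (\ref{eqn.combid}), which I would prove by induction on $L$. The base case $L=0$ reduces to $\mu_B/\mu_B=1$. For the inductive step I would peel off the $\ell=L+1$ summand, which has empty product and contributes $\mu_{B\setminus A_L}/\mu_{A_L\cup B}$, then factor $\mu_{A_L}/\mu_{A_L\cup B}$ out of the remaining $L$ summands; the residual sum is precisely (\ref{eqn.combid}) for the truncated sequence $(c^1,\ldots,c^{L-1})$ and equals $1$ by the inductive hypothesis. Combining the two contributions yields $(\mu_{A_L}+\mu_{B\setminus A_L})/\mu_{A_L\cup B}=1$, using the disjoint decomposition $A_L\cup B=A_L\sqcup(B\setminus A_L)$. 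With both partial balance equations established, global balance follows by summation, completing the proof.
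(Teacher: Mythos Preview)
Your proof is correct and follows essentially the same route as the paper. The paper does not give a self-contained proof of Theorem~\ref{thm.redundancy} (it is cited from Gardner et al.), but its Appendix~\ref{sec.partialbalance} proves the closely related Theorem~\ref{thm.fcfsalis} by exactly the partial-balance decomposition you use: the pairing ``service-out $=$ arrival-in from truncation'' is the paper's balance of (iv) with (v), and your per-$c'$ identity~(\ref{eqn.combid}) is, after dividing through by $\prod_{k}1/\mu_{A_k}$, the paper's identity~(\ref{eqn.show}), proved by the same induction on $L$ peeling off the last summand and using $\mu_{A_L}+\mu_{B\setminus A_L}=\mu_{A_L\cup B}$.
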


\subsection{The FCFS parallel matching queue}
\label{sec.matchqueue}
Customers of various types arrive in independent  Poisson streams of rates $\lambda_{c_i}$ and queue up in order of arrival.  Servers of various types arrive in independent Poisson streams of rates $\mu_{s_j}$.  An arriving server scans the queue of customers and matches with the longest waiting customer that is compatible with him, and the two leave the system immediately.  If the server does not find a compatible customer in the queue he leaves immediately without a customer.  

Figure \ref{fig.matchqueue}  illustrates a possible history of this system, for our example.  
\begin{figure}[htbp]
   \centering
   \includegraphics[width=1.9in]{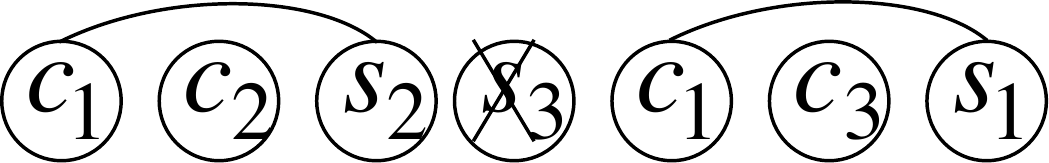} 
   \caption{A partial history of the matching queue}
   \label{fig.matchqueue}
\end{figure}
The figure shows a sequence of customers and servers specified by their types, ordered in the order of arrival from left to right.  Customer of type $c_1$ arrived first, followed by a customer of type $c_2$.  Next a server of type $s_2$ arrived and was immediately matched to the first customer and they departed together.  Next a server of type $s_3$ arrived and left immediately without a match.  This was followed by a customer of type $c_1$, then a customer of type $c_3$ and finally by a server of type $s_1$ that matched immediately with the third customer, of type $c_1$, and departed with him.  At this point in time there was a queue of two customers, the earlier of type $c_2$, the later of type $c_3$.

We describe this system by the process $X^m(t) = (c^1,\ldots,c^L)$, where there are $L$ customers in total, their  types (random) are $c^1,\ldots,c^L$,  ordered in order of arrival, with $c^1$ the longest waiting, and the time is $t$.

\begin{theorem}
\label{thm.matchqueue}
The process $X^m(t)$ is a Markov chain, it is ergodic if and only if the stability condition (\ref{eqn.stability}) holds, and its stationary distribution is given up to a constant by:
\begin{eqnarray}
\label{eqn.matchingqueue}
&P^m(c^1,\ldots,c^L) \propto \prod_{\ell=1}^L \frac{\lambda_{c^\ell}}{\mu_{\S(\{c^1,\ldots,c^\ell\})}}
\end{eqnarray}
\end{theorem}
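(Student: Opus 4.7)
The plan is to exploit the equivalence between the matching queue dynamics and those of the redundancy service model already analyzed in Theorem \ref{thm.redundancy}: since both candidate processes live on the same state space (finite ordered sequences of customer types) and, as I argue below, share the same transition rates, the stationary distribution (\ref{eqn.matchingqueue}) is identical to (\ref{eqn.redundancy}) and need not be re-derived from balance equations.

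First I would verify that $X^m(t)$ is a continuous-time Markov chain. All inter-event times are exponential, customer arrival rates depend only on the types $c_i$ and not on the current state, and an arriving server's effect on the state is determined entirely by the current ordered list $(c^1,\ldots,c^L)$, so the next transition's time and target depend only on the present state.

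Next, I would couple the generator of $X^m(t)$ with that of $X^r(t)$ transition by transition. Customer arrivals act identically in both models: a type $c$ customer is appended to the list at rate $\lambda_c$. For server events, in the matching queue an arrival of type $s_j$ (rate $\mu_{s_j}$) removes the oldest $c^\ell$ in the list with $s_j \in \S(c^\ell)$, and leaves the state unchanged if no such customer is present. In the redundancy model, server $s_j$ completes a service at rate $\mu_{s_j}$ whenever its queue is nonempty, and the customer at the head of $s_j$'s queue is, by FCFS within each queue and by the fact that every compatible customer joins $s_j$'s queue upon arrival, precisely the oldest customer in the system compatible with $s_j$; that customer and all its copies then depart. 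Hence the state-change rule is identical, and the ``null'' server arrivals in the matching queue (when $s_j$ finds no compatible customer) correspond to events that simply do not occur in the redundancy model, but in both cases $X$ is unchanged. Consequently the two generators coincide on $X$, and the stationary distribution of $X^m(t)$ equals that of $X^r(t)$.

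Finally, the product form in (\ref{eqn.matchingqueue}) is summable over all finite sequences $(c^1,\ldots,c^L)$ if and only if the stability condition (\ref{eqn.stability}) holds, by the same convergence analysis already carried out for (\ref{eqn.redundancy}). The main (though mild) obstacle is justifying carefully the claim that in the redundancy model the head of $s_j$'s queue is always the globally oldest customer compatible with $s_j$; this follows by induction on arrival and departure events starting from an empty system, since each arriving compatible customer is appended to the tail of $s_j$'s queue and no subsequent reordering ever occurs.
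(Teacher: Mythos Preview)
Your proposal is correct and coincides with one of the two routes the paper explicitly offers: the paper states that the result ``follows directly from Theorem~\ref{thm.equiv1}'' (the sample-path equivalence of $X^m$ and $X^r$), and your transition-by-transition comparison of the generators is precisely the content of that theorem's proof. The only cosmetic difference is that the paper organizes the argument by customer position (computing the departure rate of $c^\ell$ as $\mu_{\S(c^\ell)\setminus\S(\{c^1,\ldots,c^{\ell-1}\})}$) rather than by server type, but the two viewpoints are trivially equivalent.
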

The proof of this theorem is identical to the proof of Theorem \ref{thm.redundancy}.  It also follows directly from Theorem \ref{thm.equiv1}.

\section{Comparison of the three service models}
\label{sec.relations}
As we see from Theorems \ref{thm.redundancy}, \ref{thm.matchqueue} and Corollary \ref{thm.fcfsalisbusy}, all three parallel service systems are associated with a Markov chain with the same stationary distribution.  Furthermore this stationary distribution is similar to that of the FCFS infinite matching model.  We now explore the relations between  these models.

\subsection{Equivalence of the redundancy service system and the matching queue}
\label{sec.equiv1}
Note that although the redundancy system can have idle servers, and the matching queue cannot, the state of the redundancy system is completely determined by the sequence of customers in the system; servers are idle at a given time if there are no compatible customers  in the system at that time. We will show that the matching and redundancy queues are sample-path equivalent in the sense that if we start them with the same customer state, and we couple the customer arrival processes in the two queues, and we couple potential service completions in the redundancy queue with service arrivals in the matching queue, then the sample paths for the state processes of the two systems will be identical, with probability one.

\begin{theorem}
\label{thm.equiv1}
The redundancy service system and the matching queue are equivalent in the sense that 
the processes $X^r(t)$ and $X^m(t)$ are sample path equivalent.  In particular this means that for each type of customer, the  sojourn time in the system is the same for both models.
\end{theorem}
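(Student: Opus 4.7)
The plan is to construct an explicit sample-path coupling between the two systems, starting from a common state $(c^1,\dots,c^L)$, and show that the joint state evolves identically in both. First, I would couple the customer arrival processes trivially: use the same Poisson streams $(\lambda_{c_i})$ to generate arrivals of each type in both systems. For the server side, I would use the standard Poissonization of the redundancy dynamics: by the memoryless property of exponential service times, I can replace the service mechanism at server $s_j$ by an independent Poisson clock of rate $\mu_{s_j}$ that fires at all times, with the convention that a firing has an effect only when $s_j$ has work, in which case the head-of-line customer at $s_j$'s queue completes service. I would then couple this clock with the Poisson arrival stream of type $s_j$ servers in the matching queue.

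The central observation to prove is that, in the redundancy system, the head-of-line customer at server $s_j$ is precisely the oldest customer in the combined sequence $(c^1,\dots,c^L)$ that is compatible with $s_j$. This follows immediately from two facts: each server processes its own queue in FCFS order, and a customer $c^\ell$ has a copy in $s_j$'s queue if and only if $s_j \in \S(c^{\ell})$. Consequently, when $s_j$'s Poisson clock fires and $s_j$ has work, the customer that departs (together with all its copies) is exactly the oldest compatible customer in the sequence.

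With this identification in hand, I would enumerate the possible events under the coupling and check that each induces the same transition on the sequence $(c^1,\dots,c^L)$ in both systems. A customer arrival of type $c_i$ simply appends $c_i$ to the end of the sequence in both models. A firing of the $s_j$-clock has two cases: if no customer in the sequence is compatible with $s_j$, then $s_j$ is idle in the redundancy model so nothing happens, while in the matching queue the arriving server finds no compatible customer and leaves unmatched, so again the sequence is unchanged; if there is at least one compatible customer, then in both systems the oldest such customer is removed. Hence the state processes $X^r(t)$ and $X^m(t)$ agree at all times almost surely, and the sojourn time of any individual customer, defined as the difference between its arrival time and the firing that removes it, coincides in the two systems.

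The only subtlety I anticipate is justifying the Poissonization step cleanly, i.e.\ that replacing the on/off exponential service clocks of the redundancy model with always-on Poisson clocks (whose firings during idle periods are discarded) yields a process equal in distribution to the original redundancy model. This is a routine consequence of the memoryless property and the thinning/superposition calculus for Poisson processes, but I would spell it out carefully, noting that a server in the redundancy model never abandons a customer copy other than when that customer departs, so the residual service time at each busy-to-next-completion transition is a fresh $\mathrm{Exp}(\mu_{s_j})$ and can be read off the always-on clock.
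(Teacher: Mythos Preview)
Your proposal is correct and follows essentially the same coupling argument as the paper: both show that from any common state $(c^1,\dots,c^L)$, arrivals act identically and a server-$s_j$ event removes the oldest compatible customer (or does nothing) in each system. The only cosmetic difference is that the paper phrases the departure analysis customer-by-customer---computing the departure rate of $c^\ell$ as $\mu_{\S(c^\ell)\setminus\S(\{c^1,\dots,c^{\ell-1}\})}$---whereas you phrase it server-by-server via the head-of-line identification; these are dual descriptions of the same transition structure, and your explicit Poissonization of the idle periods is a welcome clarification that the paper leaves implicit.
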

\begin{proof}
Consider the situation at time $t$ where the customers in the system, ordered in order of arrival, are of types $c^1,\ldots,c^L$, in each of the systems.  Then if a customer of type $c^*$ arrives   
he joins the queue as last in both systems, which remain identical.  The only other thing that can happen is that one of the customers leaves.  In the redundancy service system,  the first customer is currently served by all servers in $\S(c^1)$ simultaneously, and will depart at rate $\mu_{\S(c^1)}$.  In the matching model, the first customer will depart if a server of type in  $\S(c^1)$ arrives, which happens at rate $\mu_{\S(c^1)}$.  Furthermore, 
in the redundancy service system,  the $\ell$th customer is currently served by all servers in $\S(c^\ell)\backslash \S(\{c^1,\ldots,c^{\ell-1}\})$ simultaneously, if this set is non-empty, otherwise it is not being served.  The rate of departure of the $\ell$th customer is then 
$\mu_{\S(c^\ell)\backslash \S(\{c^1,\ldots,c^{\ell-1}\})}$.  In the matching system the $\ell$th customer will depart if a server of type in $\S(c^\ell)\backslash \S(\{c^1,\ldots,c^{\ell-1}\})$ arrives, which has the same rate $\mu_{\S(c^\ell)\backslash \S(\{c^1,\ldots,c^{\ell-1}\})}$.

So, in the coupled system, the first change from state $c^1,\ldots,c^L$ will be the same for both systems.  This completes the proof.
\end{proof}

\subsection{Comparing the FCFS-ALIS and  the redundancy service systems}
\label{sec.equiv2}
In contrast, the situation is different when we compare the FCFS-ALIS system with the redundancy system.   We list some points for comparison:
\begin{compactitem}[-]
\item
The process $X^q(t)|$busy and $X^r(t)$ have the same stationary distribution, but $X^r(t)$ includes all customers in the system, those waiting and those being served, while $X^q(t)$ only includes waiting customers, so there is an additional set of customers which are currently in service in the FCFS-ALIS system.

It is in fact shown that the stationary distribution of the types of customers that are in service in the FCFS-ALIS system cannot be expressed in product form, even for the simple `N' compatibility graph; see \cite{visschers2012product}.
\item
One can regard the FCFS-ALIS system also as a system in which customers split on arrival into several copies that queue up at all the compatible servers, similar to the redundancy queue.  However, at the instant that service of one copy  starts,  all the other copies disappear.  This happens either when the customer has been waiting at several queues, and  reaches the server in one of these queues, or when on arrival he finds several compatible servers, in which case he will be processed by the  longest idle server, so there is no simultaneous processing.  
\item
It is worth mentioning that the FCFS-ALIS system is equivalent to a system in which customers have full information about all the processing times in the system, and each arriving customer  joins the compatible server with the shortest workload.  This  join the shortest workload policy (JSW) leads to  a Nash equilibrium determined by selfish customers.
\item
With the same set of customers  $c^1,\ldots,c^L$, and the same set of idle servers $s^1,\ldots,s^K$ in the system,  
 under FCFS-ALIS each busy server serves a different customer, while in the redundancy system different servers may serve the same customer simultaneously.  Therefore, although the stationary distributions of $X^q(t)|$busy and $X^r(t)$ are the same, they are not sample path equivalent.
\item
Because all the processing times are exponentially distributed, there is no loss of processing time when a customer is served simultaneously by more than one server.  In fact, if a set of servers are processing jobs, the next service completion will be at the same time whether they work on different customers or are processing the same customer simultaneously. 
\item
If in the two systems there is the same set of customers (including both, waiting and in service), then the number of busy servers in the redundancy system is greater or equal to the number of busy servers in the FCFS-ALIS systems, because simultaneous service is allowed under the redundancy system.
\item
Under the Redundancy service policy  flexible customers have an advantage over less flexible customers.  As a result, the composition of  customers in the system under Redundancy service may include more inflexible and fewer flexible customers than  under FCFS-ALIS policy.  This may result in forced idleness when too many inflexible customers accumulate, and there are no  flexible customers left in the system.
\end{compactitem}
The last two considerations indicate that in comparing performance of the two service policies may depend on the parameters of the system, such as workloads and service rates.
  In the next section we take a closer look at this question by  a detailed study of the special case of the `N'-system.


\section{A comparison of FCFS-ALIS and Redundancy Service for the `N'-System}
\label{sec.N-model}
In this section, we compare the performance of  \textit{FCFS-ALIS} policy and  \textit{Redundancy Service} policy for the  `N'-system. In the  comparison of the expected sojourn times and number of customers in steady state under the two policies, we find that neither of  the policies dominates the other. 
We then consider a coupled realization of both systems, and analyze how the sample paths under the two policies differ, and prove a theorem on the difference.  We also present some simulation results that illustrate typical behavior in light traffic and in heavy traffic.

The $N$ model is illustrated in figure \ref{im0}.  There are two servers and two customer types. Type 1 customers arrive at rate $\lambda_1$ and are flexible, and can be served by either server, type 2 customers arrive at rate $\lambda_2$ and can only be served by server 2.  Server 2 is flexible and can serve both types of customers, at rate $\mu_2$, while server 1 can only serve type 1 customers, at rate $\mu_1$.
\begin{figure}[h!]
  \centering
  \includegraphics[scale=0.3]{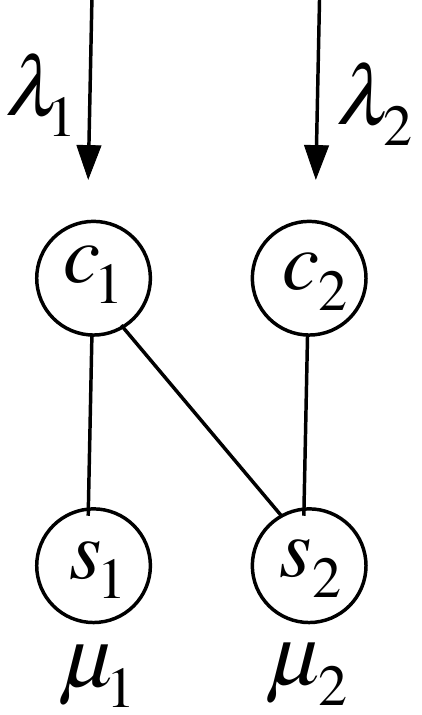}
   \caption{The `N'-system}
  \label{im0}
\end{figure}

   The sojourn time for the `N'-system under the Redundancy Service policy is derived in  Theorems 2 and 3 of \cite{gardner2016queueing}.  From this we obtain the expected sojourn times for type 1 and type 2 customers:
\[
E(W_1^r) = \frac{1}{\mu_1+\mu_2 - \lambda_1 -\lambda_2},  \qquad 
E(W_2^r)= \frac{1}{\mu_2 - \lambda_2} -  \frac{1}{\mu_1+\mu_2 -\lambda_2}  
+ \frac{1}{\mu_1+\mu_2 - \lambda_1 -\lambda_2}. 
\]   
The expected waiting times and the service times for type 1 and type 2 customers under the FCFS-ALIS policy can be calculated using results of section 4 in \cite{visschers2012product,adan2014skill}.  Using these results we obtain first the expected waiting times $V_1^q,\,V_2^q$, and then the expected service times $S_1^q,\,S_2^q$.  The waiting times are:
\[
\begin{array}{l}
E(V_1^q) = B  \frac{1}{(\mu_1+\mu_2 -\lambda_1-\lambda_2)^2}\left( \frac{1}{\mu_1} + \frac{1}{\mu_2 - \lambda_2}  \right), \\ 
\\
 E(V_2^q)=  B\left(\frac{1}{\lambda_1(\mu_2 -\lambda_2)^2} +
 \frac{1}{\mu_1 (\mu_1+\mu_2 -\lambda_1-\lambda_2)^2} +
 \frac{1}{(\mu_2 -\lambda_2)^2(\mu_1+\mu_2 -\lambda_1-\lambda_2)} +
 \frac{1}{(\mu_2 -\lambda_2)(\mu_1+\mu_2 -\lambda_1-\lambda_2)^2} 
  \right)
\end{array}
\]   
where
{\small
\[
B = \left( \frac{1}{\lambda_1(\lambda_1+\lambda_2)} +\frac{1}{(\lambda_1+\lambda_2)^2}
+  \frac{1}{\mu_1 (\lambda_1+\lambda_2)} + 
 \frac{1}{\lambda_1(\mu_2 -\lambda_2)} + 
 \frac{1}{\mu_1 (\mu_1+\mu_2 -\lambda_1-\lambda_2)} +
 \frac{1}{(\mu_2- \lambda_2)(\mu_1+\mu_2 -\lambda_1-\lambda_2)}
\right)^{-1}.
\]
}
To calculate the service times of customers of type 1, we note that the total stationary probability that servers 1 and 2 are busy, denoted here as $b_1^q,\,b_2^q$, are given \cite{adan2014skill} by:
\[
\begin{array}{l}
b_1^q=P(\mbox{server 1 busy})=B \left( \frac{1}{\mu_1}\frac{1}{\lambda_1+\lambda_2} + 
\frac{1}{\mu_1}\frac{1}{\mu_1+\mu_2-\lambda_1-\lambda_2} + 
\frac{1}{\mu_2-\lambda_2}\frac{1}{\mu_1+\mu_2-\lambda_1-\lambda_2}\right),\\
\\
b_2^q=P(\mbox{server 2 busy})=B \left(  \frac{1}{\mu_2-\lambda_2}\frac{1}{\lambda_1} + 
\frac{1}{\mu_1}\frac{1}{\mu_1+\mu_2-\lambda_1-\lambda_2} + 
\frac{1}{\mu_2-\lambda_2}\frac{1}{\mu_1+\mu_2-\lambda_1-\lambda_2}\right).
\end{array}
\] 
and therefore the stationary probability that server 2 is working on customer of type 1 is 
\[
P(\mbox{server 2 working on customers of type 1})  = b_2^q - \frac{\lambda_2}{\mu_2}.
\]
From these we get expressions for the expected service times of the customers in steady state:
\[
E(S_1^q)=\frac{b_1^q}{\mu_1 b_1^q + \mu_2(b_2^q - \frac{\lambda_2}{\mu_2})}
+ \frac{b_2^q - \frac{\lambda_2}{\mu_2}}{\mu_1 b_1^q + \mu_2(b_2^q - \frac{\lambda_2}{\mu_2})},  \qquad  E(S_1^q)= \frac{1}{\mu_2}.
\]

The expected number of customers in the system can now also be obtained, by Little's Law.
This enables us to compare expected sojourn times and number in system under the two policies.  In the following Figure \ref{fig.3dplots} we plot the difference in expected number in system under the two policies.  It is seen from the plots that one policy does not always dominate the other.  
\begin{figure}[h!]
  \centering
  \includegraphics[width=1\textwidth]{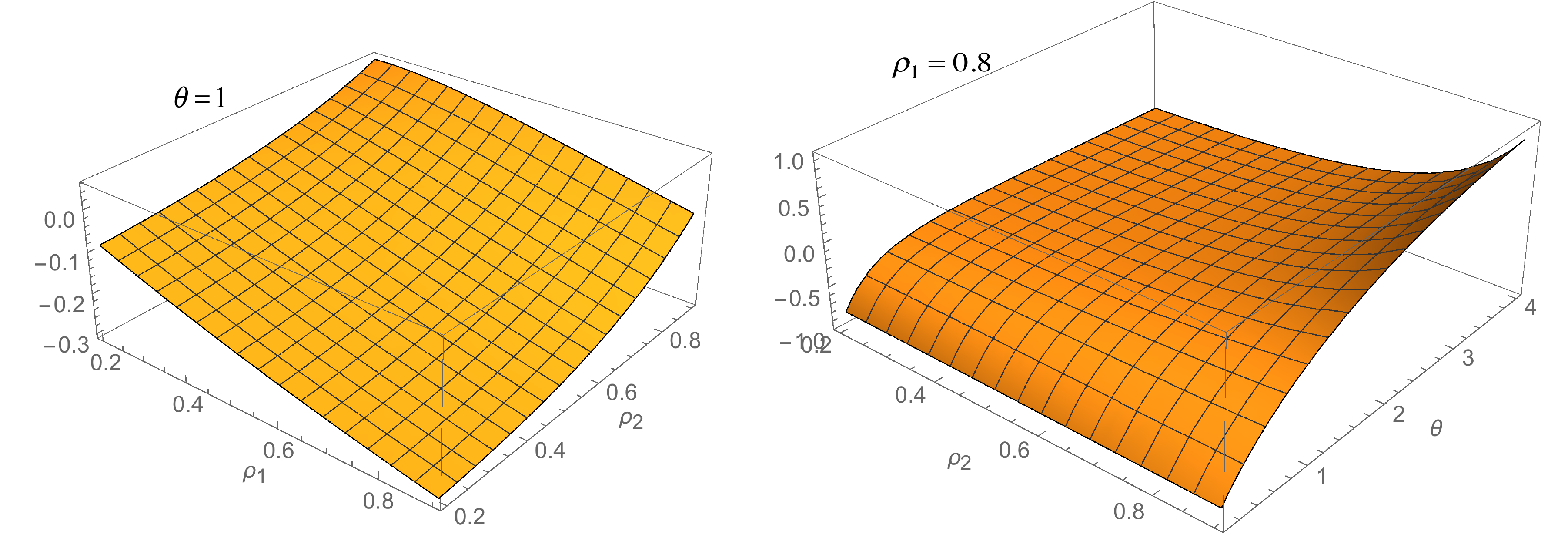}
   \caption{Plots of $E(N_1^r+N_2^r-N_1^q-N_2^q)$ as a function of $\rho_1=\frac{\lambda_1}{\mu_1},\, \rho_2=\frac{\lambda_2}{\mu_2},\,\theta=\frac{\mu_1}{\mu_2}$}
  \label{fig.3dplots}
\end{figure}

To learn more about the behavior of the `N'-system under the two policies, we now study coupled versions of the two systems, which we define as follows:  We have 4 independent Poisson processes of rates $\lambda_1,\lambda_2,\mu_1,\mu_2$ which are shared by the two systems.  The first two give the arrival times of the two types of customers.  The second two give the sequences of potential service completions of the two servers.  When a potential service is completed at either of the systems, if a customer is in service that customer leaves.  In the system with redundant service, if two servers are serving the same customer, then when one of them completes service, the other server will continue the unfinished service period by serving another customer (or it will be idle).  
If the server is idle at a service completion there is no change in the state.  This coupling means that arrival times and potential service completions occur at the same time in both systems, but we may have that the type of customer that leaves is different, or even that under one policy there is a departure, and under the other policy there is no departure.

Let $N_1^q,\, N_1^r,\, N_2^q,\, N_2^r$, denote the random variables counting numbers  of customers in the coupled systems, when the systems are in steady state.  

\begin{theorem}
\label{thm.N-comparison}
The total number in system under Redundant Service is stochastically greater than the total number in the system under FCFS-ALIS minus 1:
\begin{equation}
\label{eqn.comparison}
N_1^r + N_2^r \ge_{ST}  N_1^q + N_2^q -1. 
\end{equation}
\end{theorem}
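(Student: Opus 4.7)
\medskip
\noindent\textbf{Proof plan.}
The plan is to prove the stronger pathwise inequality $N^r(t) \geq N^q(t) - 1$ for all $t \geq 0$ in the coupling described just before the theorem, starting from both systems empty; stochastic dominance in stationarity then follows by ergodicity of the two Markov chains. Let $\Delta(t) := N^r(t) - N^q(t)$. Arrivals leave $\Delta$ unchanged since both systems admit the arriving customer. At a potential $s_j$ completion a departure occurs under FCFS-ALIS iff $s_j$ is currently busy and under Redundancy iff $s_j$'s queue is non-empty, so $\Delta$ decreases by $1$ only in two scenarios: (i) at a potential $s_1$ completion with $N_1^r \geq 1$ but $s_1$ idle under FCFS-ALIS, or (ii) at a potential $s_2$ completion with $N^r \geq 1$ but $s_2$ idle under FCFS-ALIS.

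It therefore suffices to establish, by induction over the events of the coupled process, a joint invariant on the pair $(X^q, X^r)$ that rules out both scenarios (i) and (ii) whenever $\Delta = -1$. The structure of ALIS in the `N'-system makes such an invariant natural. If $s_1$ is idle under FCFS-ALIS, then ALIS forbids a waiting type-$1$ customer, so all type-$1$ customers in the FCFS-ALIS system (at most one) are in service at $s_2$; if $s_2$ is idle under FCFS-ALIS, then no customer of either type is waiting and any remaining customer is a type-$1$ in service at $s_1$, giving $N^q \leq 1$ and $N_2^q = 0$. These constraints are strong enough that, combined with the coupled arrival history, one can match customers present in the two systems and conclude that scenarios (i) and (ii) cannot occur when $\Delta = -1$.

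The main obstacle is pinning down the exact form of the invariant. A single $s_j$ completion can release customers of different types in the two systems and, under Redundancy, can simultaneously disengage $s_j$'s partner from co-serving a type-$1$ customer, so the joint state of $(X^q, X^r)$ can differ substantially in both the identities and the types of the customers present. I plan to describe the joint state by pairing customers present in either system in arrival order, and to track which customers are in service versus waiting in each system; the induction on events then reduces to a finite case analysis per event (arrival of each customer type, potential completion of each server, split by the busy/idle status of the relevant server in each system). As an alternative, one might embed both processes in the FCFS infinite directed matching model of Section \ref{sec.embedded}, read $N^r$ and $N^q$ off a common matching realization, and express the inequality as a purely combinatorial statement about a bipartite matching; I expect the underlying case analysis for the `N'-system to be comparable in either formulation.
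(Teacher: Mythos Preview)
Your high-level strategy---pathwise coupling from empty, induction over the common event sequence, and a joint invariant strictly finer than $\Delta\ge -1$---is exactly the route the paper takes, and your disposal of scenario (ii) is already complete (if $s_2$ is idle under FCFS-ALIS then $N^q\le 1$, so $\Delta=-1$ forces $N^r=0$). The genuine gap is that you have not identified the invariant, and that is the substance of the argument; the plan as written does not yet contain an idea that closes the $s_1$-completion case.

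The paper's invariant is not about totals or a generic arrival-order pairing: it tracks the \emph{queue position of each individual type-$2$ customer}. Writing $L^h_T(n)$ for the position of type-$2$ arrival $z^n$ in system $h$ after event $T$ ($0$ if in service at $s_2$, $j\ge 1$ if $j$th in queue, $-1$ if already departed), the statement maintained by induction is
\[
L^r_T(n)\ \ge\ L^q_T(n)-1 \qquad\text{for every such $n$ and every $T$,}
\]
and the count inequality $N^q-N^r\le 1$ is then a \emph{consequence}, not the invariant itself. The reason a count-level invariant is not self-sustaining is precisely your scenario (i): at an $s_1$ potential completion when both servers in system-r are co-serving a single type-$1$ customer, that customer departs and \emph{two} waiting customers can simultaneously enter service (the head of the queue goes to $s_2$, the first waiting type-$1$ goes to $s_1$), so a given type-$2$ customer's position in system-r can drop by $2$ in one event while its position in system-q is unchanged. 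Ruling this out when the margin is already tight requires knowing how type-$1$ and type-$2$ customers are interleaved ahead of that customer in each system, which is exactly what the position invariant encodes; the contradictions in the delicate subcases all take the form ``some type-$2$ customer still waiting in system-q must already have departed from system-r,'' i.e.\ a violation of the position invariant rather than of $\Delta\ge -1$. With this invariant in hand the induction goes through, though the $s_1$-completion case still splits into several configurations (co-service or not in system-r; presence and location of waiting type-$1$ customers in each system).
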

The proof of this theorem is based on coupling, and is given in \ref{sec.N-comparison}.  

It is instructive to explain under what conditions either policy may be advantageous.
In light traffic, when the servers are not overloaded, the redundant system may be preferred, 
since there will be many occasions when there is a single type 1 customer in the system, in which case under FCFS-ALIS one of the servers is idle, while under Redundancy Service both servers are working.  

This is illustrated in the following coupled  sample realization in Figure \ref{figst3}.  The parameters for this example are:
\begin{figure}[h!]
  \centering
  \includegraphics[width=1\textwidth]{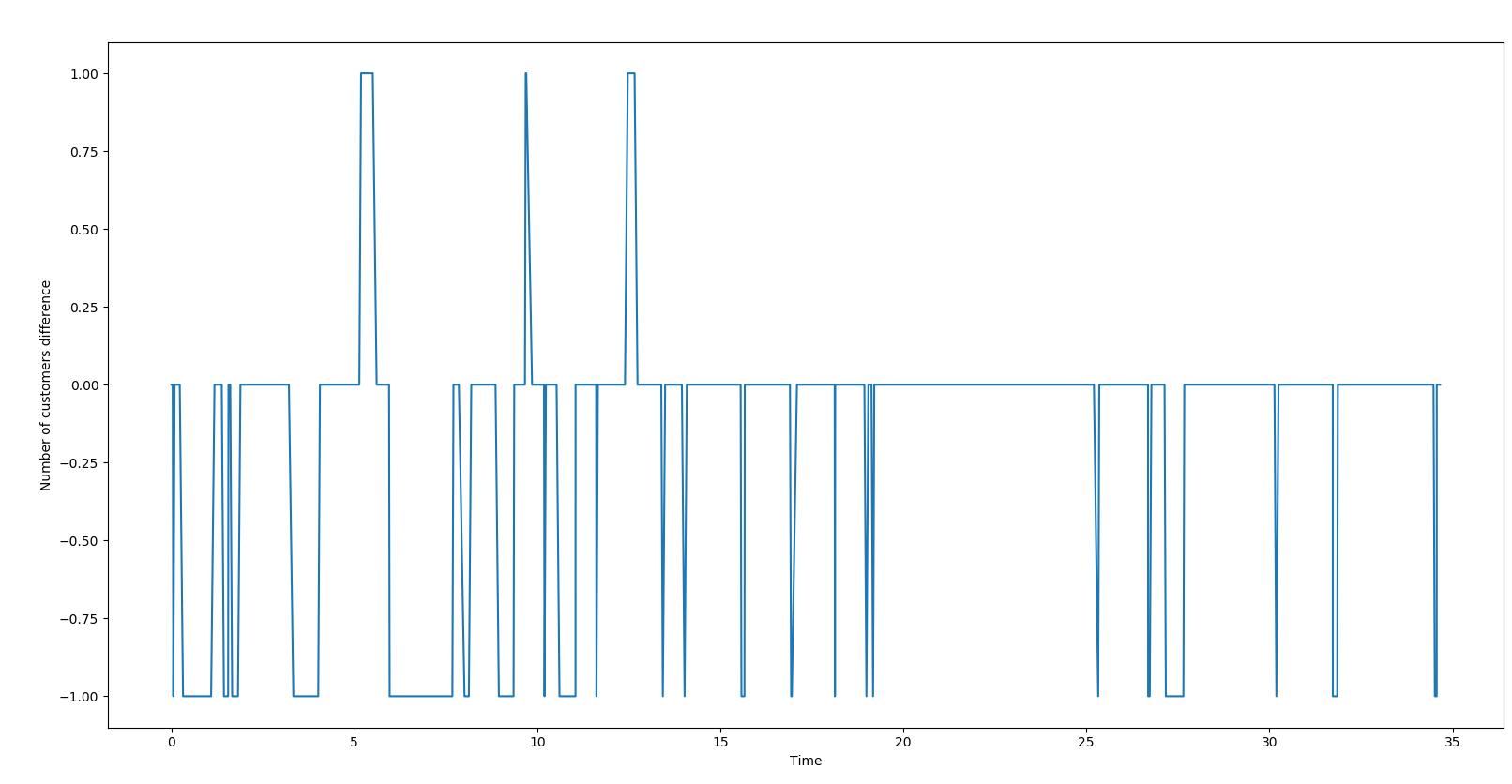}
   \caption{Example 1:  Difference in the number of customers  for: $\lambda_1=3,\lambda_2=2,\mu_1=6,\mu_2=6$}
  \label{figst3}
\end{figure}
\[
\mbox{Example 1:} \quad \lambda_1=3,\,\lambda_2=2,\,\mu_1=6,\,\mu_2=6,
\qquad E(N_1^r+N_2^r)=1.014, \quad E(N_1^q+N_2^q)=1.194.
\]
We plot the number of customers $N_1^r+N_2^r - N_1^q-N_2^q$.  We see that for an appreciable fraction  of the time the difference  equals to $-1$.

On the other hand, when the flexible server 2 is heavily loaded by inflexible customers  of type 2, then at each time that server 2 is `helping' server 1 by serving a type 1 cusotmer, customers of type 2 accumulate, and so redundancy can have more congestion than FCFS-ALIS policy.
This is clearly illustrated in the following coupled  sample realization in Figure \ref{figst2}.
\begin{figure}[h!]
  \centering
  \includegraphics[width=1\textwidth]{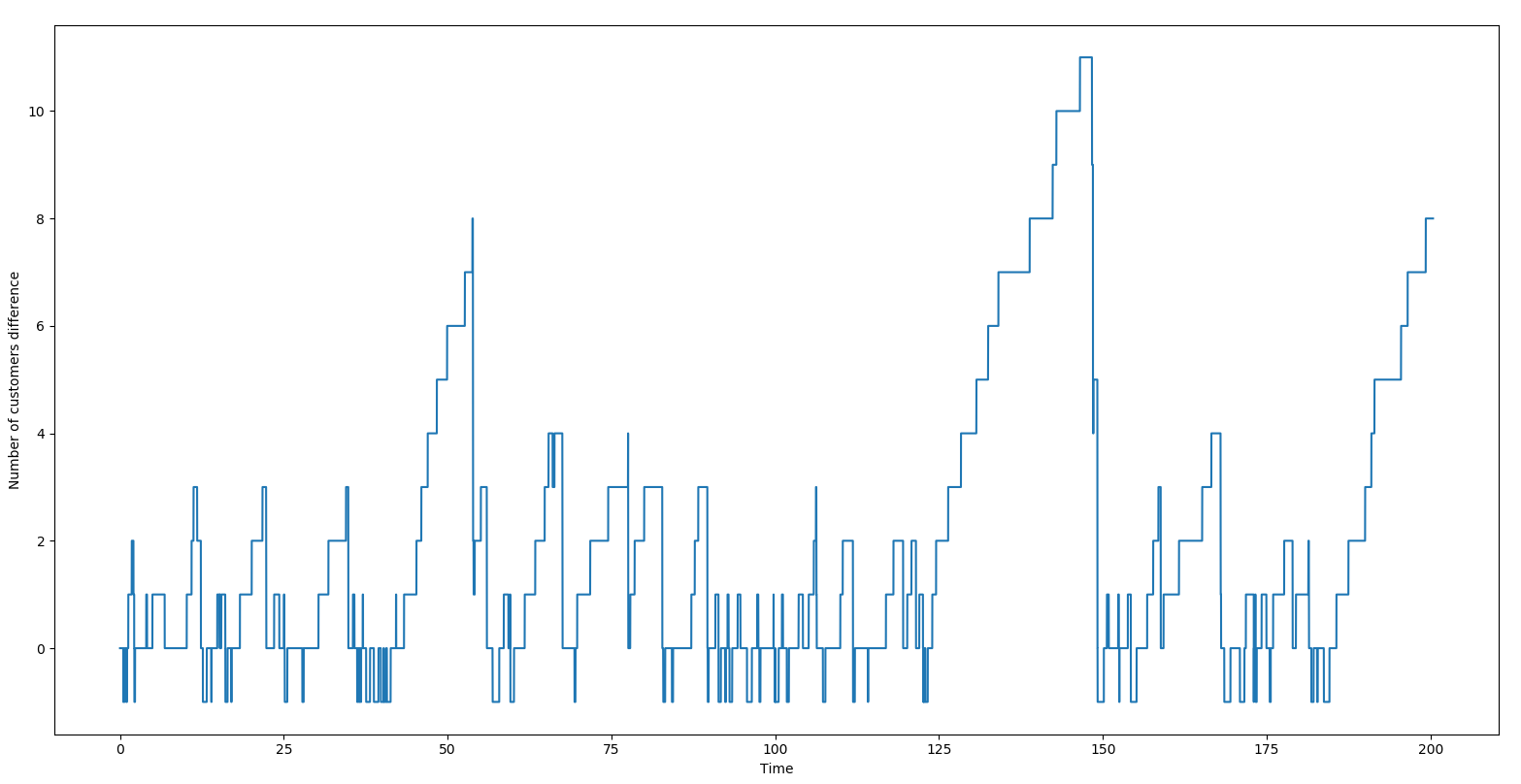}
   \caption{Example 2: Difference in the number of customers for: $\lambda_1=2,\lambda_2=145,\mu_1=3,\mu_2=150$}
  \label{figst2}
\end{figure}
\[
\mbox{Example 2:} \quad \lambda_1=2,\,\lambda_2=145,\,\mu_1=3,\,\mu_2=150,
\qquad E(N_1^r+N_2^r)=35.375, \quad E(N_1^q+N_2^q)=32.4993.
\]
  We see that the difference is positive most of the time and can be as high as $10$.  Note that in this case, if we use dedicated service of customers of type 1 by server 1 only, and customers of type 2 by server 2 only, then $E(N_1)=2$, $E(N_2)=29$.


\section{The FCFS Infinite Bipartite Matching Model}
\label{sec.infmatching}
We note that all three service models have similar stationary distributions.  Furthermore, these distributions are also similar to stationary distributions that were obtained for the infinite bipartite matching model introduced by Kaplan, Caldentey and Weiss \cite{caldentey2009fcfs} and further studied in \cite{adan2012exact,adan2015reversibility}.  We briefly describe the results of \cite{adan2012exact,adan2015reversibility}.

There are two independent doubly infinite series of customers $\ldots,c^{-2},c^{-1},c^0,c^1,c^2,\ldots$ drawn i.i.d. from $\C$ according to the probabilities $\alpha$, and of servers $\ldots,s^{-2},s^{-1},s^0,s^1,s^2,\ldots$, drawn i.i.d. from $\S$ according to the probabilities $\beta$,  
and they are matched FCFS according to the compatibility graph $\mathcal{G}$.  

What we mean by FCFS is that if $s^n$ is matched with $c^m$, then there is no earlier $s^k\in \S(c^m)$ which is unmatched, and no earlier $c^\ell \in \C(s^n)$ which is unmatched.
Figure \ref{fig.infmatch}  illustrates FCFS infinite bipartite matching with the compatibility graph of Figure \ref{fig.compatiiblity}, for a window of the sequences.  
\begin{figure}[htbp]
   \centering
   \includegraphics[width=2.9in]{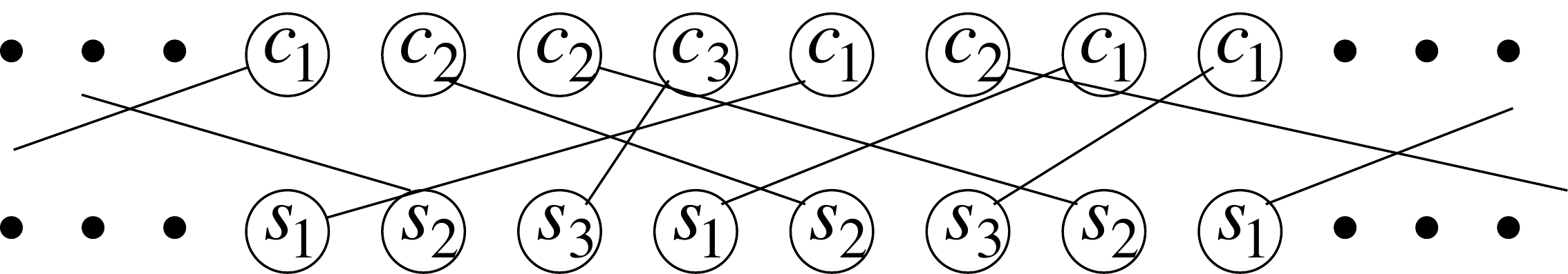} 
   \caption{FCFS infinite  bipartite matching}
   \label{fig.infmatch}
\end{figure}
In this figure one customer and one server were matched to an earlier (left of the window) customer and server, and one customer and one server remain unmatched at the end of the window, and are matched to a later (right of the window) customer and server.
\begin{definition}
We say that this system has complete resource pooling if the following equivalent conditions hold for any  $S\subset \S,\,S\ne\emptyset,\S$ and $C\subset \C,\,C\ne\emptyset,\C$:
\begin{equation}
\label{eqn.comprespool}
\alpha_C < \beta_{\S(C)}, \qquad \beta_S < \alpha_{\C(S)}, \qquad \alpha_{\U(S)} < \beta_S.
\end{equation}
\end{definition}
The following theorem was proved by Adan et al. \cite{adan2015reversibility}:
\begin{theorem}[Adan, Busic, Mairesse and Weiss \cite{adan2015reversibility}]
\label{thm.infmatch1}
If complete resource pooling (\ref{eqn.comprespool}) holds then almost surely there exists a  FCFS matching of the two sequences and this matching is unique.
\end{theorem}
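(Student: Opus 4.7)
The plan is to treat uniqueness and existence separately. Uniqueness requires no probability: suppose two distinct FCFS matchings $\pi_1,\pi_2$ existed on the same realization of the two sequences, and let $n^\ast$ be the smallest index at which they disagree, say on the partner of $s^{n^\ast}$. By minimality both matchings agree strictly before $n^\ast$, so in both the set of compatible unmatched customers preceding $s^{n^\ast}$ is the same; the FCFS constraint then forces the same partner in $\pi_1$ and $\pi_2$, a contradiction. The same reasoning covers a first disagreement at a customer.

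For existence I would use a one-sided Markov chain and a Loynes-type backward construction. Running the FCFS procedure forward from some index $n_0$, the ordered list of not-yet-matched letters is the natural state; a simple observation is that under FCFS this list is always entirely on one side of the bipartition, because the presence of an unmatched customer and an unmatched server from an earlier step that are compatible would contradict the matching rule. Hence the state space is $\C^\ast \cup \S^\ast$ with empty-word atom $\emptyset$, and the state evolves as an irreducible Markov chain on this countable space. The stationary distribution of this chain was computed in \cite{adan2012exact,adan2015reversibility} in product form; the key technical fact to be verified is that this candidate measure is summable precisely when the complete resource pooling inequalities (\ref{eqn.comprespool}) hold. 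Summability gives positive recurrence, so $\emptyset$ is visited infinitely often almost surely.

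Given positive recurrence, extend backward by Loynes: for each $N$ start the chain from $\emptyset$ at index $-N$, producing a matching on $[-N,\infty)$. Let $T_{-N}^{(k)}$ be the successive indices at which the state returns to $\emptyset$; by stationarity of the driving sequences and positive recurrence, $T_{-N}^{(1)} \to -\infty$ almost surely as $N\to\infty$, and for any fixed window $[-M,M]$ the block structure and hence the constructed matching on that window stabilizes for $N$ large enough. The limit defines a doubly infinite FCFS matching in which $\emptyset$ recurs in both directions, so every letter lies in a finite block delimited by two consecutive empty epochs and is therefore matched in finite time, establishing existence.

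The main obstacle is the positive-recurrence step, since the rest of the argument is a fairly routine coupling-from-the-past combined with the block-structure observation. Verifying summability of the candidate stationary measure exactly under (\ref{eqn.comprespool}) is the core combinatorial content borrowed from \cite{adan2015reversibility}; it can be done either by a direct generating-function evaluation on the product form, or by constructing a Foster--Lyapunov function out of the dominating linear combination $\sum_i \alpha_i |c\text{'s of type }i| - \sum_j \beta_j|s\text{'s of type }j|$ and exploiting the CRP inequalities to obtain a uniform negative drift away from $\emptyset$.
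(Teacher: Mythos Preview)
Your uniqueness argument has a genuine gap: the sequences are doubly infinite, so there need not exist a \emph{smallest} index $n^\ast$ at which two FCFS matchings disagree. The FCFS condition in the paper is purely local (no earlier compatible unmatched partner), and on $\mathbb{Z}$-indexed sequences this local condition does not by itself pin down a unique matching---two matchings can disagree on an unbounded-below set of indices. This is exactly why uniqueness here is nontrivial and cannot be decoupled from the existence construction. In the paper's proof of the analogous Theorem~\ref{thm.infbackmatch1} (Appendix~\ref{sec.uniquenss}), uniqueness is obtained only \emph{after} the backward coupling shows that every Loynes approximant $U^{[-k]}$ merges with the stationary version $U^\ast$, so that the limit does not depend on the starting point; there is no separate ``minimal disagreement'' step.

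Your description of the state space is also wrong for this model. In the bipartite matching of Section~\ref{sec.infmatching} one processes $n$ customers and $n$ servers together, and the state $X^\infty(n)$ consists of $L$ unmatched customers \emph{and} $L$ unmatched servers that are mutually incompatible; see (\ref{eqn.infmatch}). Your observation that an unmatched customer and an unmatched server cannot be compatible is correct, but it does not force the state into $\C^\ast\cup\S^\ast$. The Loynes strategy you outline is indeed the paper's route, but on the correct chain, and the paper supplies two ingredients you gloss over: pathwise monotonicity and subadditivity (Lemmas~\ref{thm.monotonicity}, \ref{thm.subadditivity}) controlling how the unmatched count behaves when a block is prepended, and an explicit forward-coupling bound (Proposition~\ref{thm.forward}). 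These are what make the backward-coupling claim ``$T_{-N}^{(1)}\to-\infty$ a.s.'' rigorous---positive recurrence alone does not give it, since the return times for different $N$ use overlapping, dependent pieces of the driving sequence (this is handled carefully in Proposition~\ref{thm.backcoupling}).
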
  

We define the following transformation on the matched sequences:
\begin{definition}
For  given matched sequences, the exchange transformation exchanges the position of each matched pair, so that if $s^n$ was matched to $c^m$ in the original system, then in the exchanged system we have $\tilde{c}^n$ matched to $\tilde{s}^m$.  This defines a permutation of the original sequence $\ldots,c^{-2},c^{-1},c^0,c^1,c^2,\ldots$ to a new sequence 
$\ldots,\tilde{c}^{-2},\tilde{c}^{-1},\tilde{c}^0,\tilde{c}^1,\tilde{c}^2,\ldots$, and the original sequence $\ldots,s^{-2},s^{-1},s^0,s^1,s^2,\ldots$ to a new sequence 
$\ldots,\tilde{s}^{-2},\tilde{s}^{-1},\tilde{s}^0,\tilde{s}^1,\tilde{s}^2,\ldots$. 
\end{definition}
Figure \ref{fig.exchanged}  illustrates the exchanged sequences obtained by the exchange transformation from the illustration in Figure  \ref{fig.infmatch}.
\begin{figure}[htbp]
   \centering
   \includegraphics[width=2.9in]{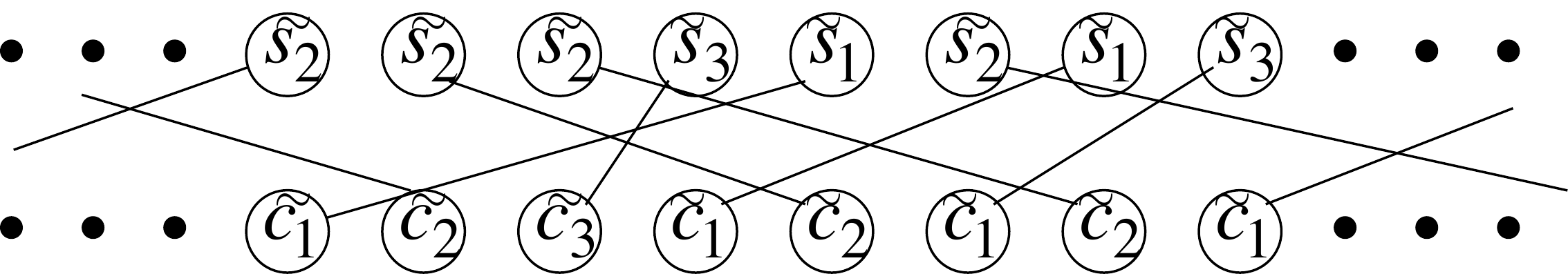} 
   \caption{The exchange transformation applied to Figure \ref{fig.infmatch}}
   \label{fig.exchanged}
\end{figure}

The following reversibility result is proved in \cite{adan2015reversibility}
\begin{theorem}[Adan, Busic, Mairesse and Weiss \cite{adan2015reversibility}]
\label{thm.infmatch2}
The exchanged sequences $\ldots,\tilde{c}^{-2},\tilde{c}^{-1},\tilde{c}^0,\tilde{c}^1,\tilde{c}^2,\ldots$, $\ldots,\tilde{s}^{-2},\tilde{s}^{-1},\tilde{s}^0,\tilde{s}^1,\tilde{s}^2,\ldots$, are independent and each is i.i.d. from $\C$ and from $\S$ according to $\alpha,\,\beta$.  Furthermore, the original matching is now the almost surely unique FCFS matching of the exchanged sequences in reversed time.
\end{theorem}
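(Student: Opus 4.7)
My plan is to prove Theorem \ref{thm.infmatch2} by casting FCFS infinite bipartite matching as a stationary Markov chain indexed by $n \in \mathbb{Z}$, and then applying a time-reversal argument in which the exchange transformation plays the role of a measure-preserving relabelling of the driving inputs. For each $n \in \mathbb{Z}$ let $W_n = (U_n^c, U_n^s)$ denote the pair of ordered lists of still-unmatched customers and servers, respectively, after the indices $\le n$ of both input sequences have been processed by FCFS; Theorem \ref{thm.infmatch1} guarantees this is well-defined almost surely. Because the matching rule is local in $n$ and the inputs $(c^{n+1},s^{n+1})$ are i.i.d., $(W_n)$ is a time-homogeneous Markov chain, and any reachable state must satisfy $\bigl(U^c_n \times U^s_n\bigr) \cap \mathfrak{E} = \emptyset$ (otherwise a compatible pair would already have been matched). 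Under condition (\ref{eqn.comprespool}), a Foster--Lyapunov argument on $|U_n^c| + |U_n^s|$ yields positive recurrence, so $(W_n)$ has a unique stationary distribution $\pi$.

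With this framework in place I would carry out three steps in order. First, compute $\pi$ explicitly via a partial-balance argument completely analogous to the one underlying Theorem \ref{thm.fcfsalis} and the matching-queue formula (\ref{eqn.matchingqueue}); the answer has the familiar product form built out of ratios of the type $\alpha_{c^\ell}/\beta_{\S(\{c^1,\ldots,c^\ell\})}$ and the symmetric expressions for unmatched servers. Second, verify the detailed-balance identity
\[
\pi(w)\,\alpha_{c}\beta_{s}\, q\bigl(w \to w';\,(c,s)\bigr) \;=\; \pi(w')\,\alpha_{\tilde c}\beta_{\tilde s}\, q\bigl(w' \to w;\,(\tilde c,\tilde s)\bigr),
\]
where the driving pair $(\tilde c,\tilde s)$ of the reverse transition consists precisely of the partners that $(c,s)$ are matched to by FCFS at the step in question. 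Summing these identities shows that the time-reversed driving sequence is again i.i.d.\ with marginals $\alpha$ and $\beta$ and that its two coordinates are independent---this is exactly the distributional claim about $(\tilde c^n)_{n\in\mathbb{Z}}$ and $(\tilde s^n)_{n\in\mathbb{Z}}$. Third, for the final assertion on reverse-time FCFS, I would check that the ``no earlier compatible unmatched counterpart'' constraint in forward time translates under the exchange bijection into exactly the same constraint when the exchanged sequences are scanned from right to left, since an earlier compatible unmatched counterpart in the exchanged, reversed system would, by the bijectivity of the swap, pull back to an earlier compatible unmatched counterpart in the original system, contradicting forward FCFS.

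The step I anticipate as the principal obstacle is the detailed-balance verification. The exchange transformation can move matched positions arbitrarily far apart, so expressing a single reverse-chain transition in terms of a forward transition requires identifying these non-local partners and showing that the measure of input trajectories producing each pair of mirrored transitions agrees exactly under the product measure $\alpha\otimes\beta$. Making this bookkeeping airtight for arbitrary compatibility graphs, and confirming that the exchange extends to a measurable involution on $\C^{\mathbb{Z}}\times\S^{\mathbb{Z}}$ that interchanges the forward and reverse FCFS matchings, is the core technical point of the proof.
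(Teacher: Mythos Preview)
The paper does not prove Theorem~\ref{thm.infmatch2} here (it is quoted from \cite{adan2015reversibility}), but it reproduces the same argument for the directed analogue, Theorem~\ref{thm.infbackmatch2}, in Appendix~\ref{sec.reversal}, and that argument is \emph{not} transition-level detailed balance. It works with perfect blocks: segments between consecutive returns of the unmatched-items chain to the empty state, inside which every item is matched internally. Within such a block the exchange is a self-contained bijection; a short combinatorial check (Lemma~\ref{thm.reverse}, which is essentially your third step) shows the exchanged block is FCFS in reversed time, and since the probability of a block depends only on the multiset of types present (Lemma~\ref{thm.blocks}), the exchange preserves the law of each block. Palm calculus then lifts this block-level identity to the full stationary sequence.

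Your detailed-balance identity, by contrast, is not well-posed as a relation between one-step kernels, and this is the gap you flag but do not close. The pair $(\tilde c,\tilde s)$ you want as the reverse driver at step $n$ consists of the eventual partners of $s^n$ and $c^n$ under the full matching, but if, say, $c^n$ finds no compatible server in $U_{n-1}^s$ and is appended to the unmatched list, its partner lies strictly in the future and $\tilde s^n$ is simply not a function of $(w,w',c^n,s^n)$. There is then no single reverse transition with driver $(\tilde c,\tilde s)$ to balance against, and the summation you describe cannot be formed. The block decomposition is precisely the device that tames this non-locality: conditioning on a return to $\emptyset$ isolates a finite window in which all partners are internal, so the exchange becomes a local, measure-preserving involution on blocks. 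To salvage a one-step reversal you would have to enlarge the state so that already-determined exchanged values are emitted while still-unmatched items are carried as state rather than output; this can be made to work, but it is considerably more delicate than the block argument and is not what you have written.
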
  

Using the reversibility, it is easy to obtain stationary distributions for several Markov chains associated with  this system.  We consider making all the FCFS matches of $s^\ell,c^k$ for $k,\ell \le n$, and define the process  $X^\infty(n)= (c^{i_1},\ldots,c^{i_L},s^{j_1},\ldots,s^{j_L})$,  
where customers in positions $i_1,\ldots,i_L$ and servers in positions $j_1,\ldots,j_L$ 
were left unmatched, and $c^{i_1},\ldots,c^{i_L},s^{j_1},\ldots,s^{j_L}$ are the types of these unmatched customers and servers.  
\begin{theorem}[Adan, Busic, Mairesse and Weiss \cite{adan2015reversibility}]
\label{thm.infmatch3}
The process $X^\infty(n)$ is a discrete time discrete state Markov chain.  It is ergodic if and only if complete resource pooling (\ref{eqn.comprespool}) holds.  Its stationary distribution is given, up to a normalizing constant, by:
\begin{eqnarray}
\nonumber
& P^\infty(c^{i_1},c^{i_2},\ldots,c^{i_L},s^{j_1},\ldots,s^{j_L}) \propto 
\prod_{\ell=1}^L \frac{\alpha_{c^{i_\ell}}}{\beta_{\S(\{c^{i_1},\ldots,c^{i_\ell}\})}}  \\
\label{eqn.infmatch} 
&\times \prod_{\ell=1}^L \frac{\beta_{s^{j_\ell}}}{\alpha_{\C(\{s^{j_1},\ldots,s^{j_\ell}\})}} 
\end{eqnarray}
\end{theorem}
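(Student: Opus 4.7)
The plan is to treat Theorem \ref{thm.infmatch3} as a direct descendant of the reversibility result in Theorem \ref{thm.infmatch2}, so the work naturally splits into three parts: (i) showing that $X^\infty(n)$ is a Markov chain on the appropriate countable state space, (ii) characterizing ergodicity via complete resource pooling, and (iii) identifying the product-form stationary measure.

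\textbf{Markov property.} First I would observe that the state $X^\infty(n)$ consists of two ordered lists of types, one for the unmatched customers among positions $\le n$ (in order of their position), one for the unmatched servers. FCFS forces these two lists to be ``orthogonal'' in the sense that no unmatched server at position $\le n$ is compatible with any unmatched customer at position $\le n$ (otherwise the pair would have been matched). Going from $n$ to $n+1$, we append $c^{n+1}\sim\alpha$ and $s^{n+1}\sim\beta$ independently of the past, and then resolve all matches that FCFS now demands: $s^{n+1}$ looks for the oldest compatible unmatched customer in the list (possibly $c^{n+1}$ itself), and symmetrically $c^{n+1}$ looks for the oldest compatible unmatched server. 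Because this resolution is a deterministic function of $X^\infty(n)$ together with $(c^{n+1},s^{n+1})$, and because the latter pair is independent of the past, $X^\infty(n)$ is a Markov chain on the countable set of admissible finite state vectors $(c^{i_1},\ldots,c^{i_L},s^{j_1},\ldots,s^{j_L})$.

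\textbf{Ergodicity.} For the ``if'' direction, I would invoke Theorem \ref{thm.infmatch1}: under complete resource pooling a unique FCFS matching exists almost surely, so the set of indices that remain unmatched when the matching is carried out on $(-\infty,n]$ is almost surely finite, which yields positive recurrence of the chain (e.g.\ via a Foster--Lyapunov drift on the length of the unmatched server list using the inequality $\beta_S>\alpha_{\U(S)}$). For the ``only if'' direction I would run the chain from any state and note that if one of the inequalities in (\ref{eqn.comprespool}) fails for some $C$ (resp.\ $S$), then the count of unmatched customers whose types lie in $C$ (resp.\ unmatched servers whose types lie in $S$) has non-negative drift, hence cannot be ergodic; this mirrors the transience direction of Theorem \ref{thm.infmatch1}.

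\textbf{Stationary distribution.} Here I would exploit Theorem \ref{thm.infmatch2}. In stationarity one can look at the chain simultaneously in forward and reverse time: the exchange transformation produces i.i.d.\ sequences $\tilde c$ and $\tilde s$ under which the original matching becomes the FCFS matching read backward. The state $X^\infty(n)$ then has a dual description as the set of items of the reversed i.i.d.\ sequences that fail to be matched to anything on their ``future'' side. Using this, I would write the proposed distribution (\ref{eqn.infmatch}) and check that it satisfies the detailed (partial) balance relation $\pi(x)P(x,y)=\pi(y)\hat P(y,x)$, where $\hat P$ is the transition kernel of the time-reversed chain identified via the exchange. The terms $\alpha_{c^{i_\ell}}/\beta_{\S(\{c^{i_1},\ldots,c^{i_\ell}\})}$ arise as the ratio of the probability that the new arrival leaves customer $c^{i_\ell}$ unmatched (numerator) to the probability that, in reverse time, the corresponding step removes it by matching with a compatible server (denominator), with an analogous identity for the $s$-factors. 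A convergence check then uses (\ref{eqn.comprespool}) to show the normalizing sum is finite, completing ergodicity.

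\textbf{Main obstacle.} The genuine work is in the cascade step: a single $(c^{n+1},s^{n+1})$ arrival may trigger up to two simultaneous matches and can ``swap'' the roles of several oldest elements in both lists, so the transition kernel is not a pure birth/death of a single coordinate. The crux of the verification is that the ratios in (\ref{eqn.infmatch}) are stable under these multi-step reorderings, which is exactly where the reversibility of Theorem \ref{thm.infmatch2}, applied to the exchanged sequences, does the algebraic heavy lifting.
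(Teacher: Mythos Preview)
This theorem is not proved in the present paper at all: it is quoted verbatim from \cite{adan2015reversibility}, and the only hint the authors give is the sentence ``Using the reversibility, it is easy to obtain stationary distributions for several Markov chains associated with this system.'' So there is no in-paper proof to compare against; your plan is precisely the route the authors allude to, and it is the route taken in \cite{adan2015reversibility}.

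Your sketch is sound. Two small corrections. First, your ``main obstacle'' overstates the complexity of the transition: after step $n$ the two lists are mutually incompatible, so adding $(c^{n+1},s^{n+1})$ can create at most one match for $s^{n+1}$ (with the oldest compatible customer, possibly $c^{n+1}$) and at most one for $c^{n+1}$ (with the oldest compatible server); there is no further cascade, since removing one element from a list cannot make the remaining lists compatible. Second, the verification can also be done without invoking Theorem \ref{thm.infmatch2}, by a direct partial-balance computation exactly parallel to the proof of Theorem \ref{thm.fcfsalis} in Appendix \ref{sec.partialbalance}; the two product factors in (\ref{eqn.infmatch}) decouple and each satisfies an identity of the form (\ref{eqn.show}). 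Either argument works; the reversibility route is shorter once Theorem \ref{thm.infmatch2} is in hand, while the partial-balance route is self-contained.
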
  

We note the close resemblance of this formula to the stationary distributions derived in (\ref{eqn.fcfsalis}), (\ref{eqn.redundancy}), (\ref{eqn.matchingqueue}).

\section{A Novel  FCFS Infinite Directed Matching Model}
\label{sec.embedded}
The similarity of the stationary distributions of the processes $X^q$, $X^r$, $X^m$ and the FCFS infinite bipartite matching process $X^\infty$ suggests that they may be more closely related.  In this section we introduce a new FCFS infinite matching model.  It is similar to the model of Section \ref{sec.infmatching} and  \cite{caldentey2009fcfs,adan2012exact,adan2015reversibility}.  It is also related to the model studied in \cite{mairesse2016stability}, and is even more closely related to $X^q$, $X^r$, $X^m$.
We use this new process to derive some more properties of $X^q$, $X^r$, $X^m$, in Section \ref{sec.surprise}.

We consider a single infinite sequence of customers and servers, which is generated as follows:  each successive item in the list is a customer of type $c_i$ with probability $\alpha_{c_i}=\frac{\lambda_{c_i}}{\rlambda + \rmu}$, and it is a server of type $s_j$ with probability $\beta_{s_j}=\frac{\mu_{s_j}}{\rlambda + \rmu}$, and successive items in the sequence are independent.   The result is a sequence $\ldots,z^1,z^2,\ldots$, where each item $z^n$  indicates either a type of customer or a type of server.  We then perform FCFS matching of the customers and servers according to the compatibility graph $\mathcal{G}$, {\em utilizing only matches of servers to earlier customers}.  
This means in particular that a server $z^n=s_j$ for which there is no earlier unmatched compatible customer will remain unmatched.  
We call this the {\em FCFS single stream infinite directed  bipartite matching model},  directed matching model for short.

We define the process $X^{\downarrow\infty}(n)$ to describe the matching process for the directed matching model.   Assume we have performed all the possible matches in the sequence $\ldots,z^1,z^2,\ldots$ up to and including $z^n$.  Then $X^{\downarrow\infty}(n)= (c^1,\ldots,c^L)$ is the ordered list of the  customers that are still unmatched.  
\begin{theorem}
\label{thm.embedded}
$X^{\downarrow\infty}(n)$ is a discrete time discrete state Markov chain, it is ergodic if and only if the stability condition (\ref{eqn.stability}) holds, and its stationary distribution, up to a normalizing constant, is given by:
\begin{equation}
\label{eqn.embedded}
P^{\downarrow\infty}(c^1,\ldots,c^L) \propto \prod_{\ell=1}^L \frac{\lambda_{c^\ell}}{\mu_{\S(\{c^1,\ldots,c^\ell\})}}.
\end{equation}
The fraction of servers that remain unmatched is $1 - \frac{\rlambda}{\rmu}$.
\end{theorem}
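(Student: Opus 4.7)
The plan is to observe that $X^{\downarrow\infty}(n)$ is, up to a harmless reparametrization, the uniformization (at rate $\rlambda+\rmu$) of the continuous-time redundancy-service chain $X^r(t)$ from Section \ref{sec.redundancy}. Once that identification is made, every assertion of the theorem reduces to Theorem \ref{thm.redundancy} plus an elementary rate-balance argument.

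First I would verify the Markov property: since successive items $z^n$ are i.i.d.\ drawn with probabilities $\alpha_{c_i}=\lambda_{c_i}/(\rlambda+\rmu)$ and $\beta_{s_j}=\mu_{s_j}/(\rlambda+\rmu)$, and the update rule (append the item if it is a customer; otherwise remove the oldest compatible unmatched customer, or discard the server if none exists) uses only the current ordered list of unmatched customers together with the type of $z^{n+1}$, the process $X^{\downarrow\infty}(n)$ is a discrete-state discrete-time Markov chain. I would then enumerate its one-step transitions out of a generic state $(c^1,\ldots,c^L)$:
\begin{compactitem}[-]
\item appending a new $c^*$ occurs with probability $\lambda_{c^*}/(\rlambda+\rmu)$;
\item removing the $\ell$-th customer occurs precisely when $z^{n+1}$ is a server of type in $\S(c^\ell)\backslash \S(\{c^1,\ldots,c^{\ell-1}\})$, with total probability $\mu_{\S(c^\ell)\backslash \S(\{c^1,\ldots,c^{\ell-1}\})}/(\rlambda+\rmu)$;
\item the residual mass $\bigl(\rmu-\mu_{\S(\{c^1,\ldots,c^L\})}\bigr)/(\rlambda+\rmu)$ is a self-loop, corresponding to an incompatible server being discarded.
\end{compactitem}
Comparing with the generator of $X^r(t)$, which (as used in the proof of Theorem \ref{thm.equiv1}) has transition rates $\lambda_{c^*}$ and $\mu_{\S(c^\ell)\backslash \S(\{c^1,\ldots,c^{\ell-1}\})}$ for exactly the same state changes and total outgoing rate $\rlambda+\mu_{\S(\{c^1,\ldots,c^L\})}\le \rlambda+\rmu$, shows that the directed-matching transition matrix coincides with the uniformization of $X^r$ at rate $\rlambda+\rmu$. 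Since uniformization preserves both the regime of ergodicity and the stationary distribution, the criterion (\ref{eqn.stability}) and the formula (\ref{eqn.embedded}) are inherited directly from Theorem \ref{thm.redundancy}.

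For the final assertion I would argue by rate balance: in steady state $X^{\downarrow\infty}$ is positive recurrent, so the long-run per-step rate at which customers depart the queue equals the per-step rate at which they arrive, namely $\rlambda/(\rlambda+\rmu)$. Each such departure is triggered by exactly one matched server, while the per-step rate of server arrivals is $\rmu/(\rlambda+\rmu)$, so the fraction of servers that match is $\rlambda/\rmu$ and the unmatched fraction is $1-\rlambda/\rmu$. The only step that requires any real care is the identification of the server types whose arrival removes precisely the $\ell$-th customer as $\S(c^\ell)\backslash \S(\{c^1,\ldots,c^{\ell-1}\})$; once that is in hand, the remainder of the proof is bookkeeping together with an appeal to Theorem \ref{thm.redundancy}.
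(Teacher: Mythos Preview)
Your argument is correct and essentially the same as the paper's: the paper identifies $X^{\downarrow\infty}(n)$ as the chain embedded at the Poisson-$(\rlambda+\rmu)$ event times of the matching queue $X^m(t)$, while you use the redundancy chain $X^r(t)$ instead---but these two continuous-time chains are identical by Theorem~\ref{thm.equiv1}, so the approaches coincide. Your presentation is in fact more careful than the paper's: you correctly frame the identification as \emph{uniformization} (the paper's phrase ``jump chain'' is slightly loose, since unmatched-server events produce self-loops rather than jumps), and you supply the rate-balance computation for the $1-\rlambda/\rmu$ unmatched fraction that the paper leaves implicit.
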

\begin{proof}
It is seen immediately that the Markov chain $X^{\downarrow\infty}(n)$ is the jump chain of the process $X^m(t)$.  Furthermore,  the process $X^m(t)$  has jumps in which its state changes at the uniform times of a Poisson process of rate $\rlambda+\rmu$.  The theorem  follows.
\end{proof}

{\bf Remark:}  The fraction of unmatched servers of each type $s_j$ can be calculated from (\ref{eqn.embedded}).  It is the sum of $P^{\downarrow\infty}(c^1,\ldots,c^L)$ over all sequences $c^1,\ldots,c^L$  that contain only customers that are incompatible with $s_j$.

We  define a more detailed process to describe the dynamics of the FCFS  infinite directed matching model. The  process $U(n)=(u^1,\ldots,u^K)$ records the ordered sequence of the unmatched customers as well as the  servers that are left unmatched between them, after all matches of customers and servers in the sequence $\ldots,z^1,z^2,\ldots$ up to and including  $z^n$ have been made.   We refer to $U(n)$ as the augmented Markov chain of the infinite directed matching process.  Here  $U(n)$ starts with  the earliest customer that remained unmatched up to $z^n$, $u^1 \in \C$.  If all customers up to $z^n$ have been matched we say that the matching is perfect, and we define $U(n)=\emptyset$ (we also denote it by $0$).  Clearly by Theorem \ref{thm.embedded}, if the stability condition (\ref{eqn.stability}) holds, then $U(n)$ is an ergodic Markov chain.

We now formulate two theorems for  the FCFS infinite directed matching model.  Their proofs are similar to the proof of Theorems \ref{thm.infmatch1}, \ref{thm.infmatch2} of Section \ref{sec.infmatching}, (they are Theorems 3 and 4 in Adan et al. \cite{adan2015reversibility}).  We include the proofs in  \ref{sec.uniquenss}, \ref{sec.reversal}.
\begin{theorem}
\label{thm.infbackmatch1}
Let $\ldots,z^{-1},z^0,z^1,\ldots$ be a sequence of customer and server types defined as above.  If (\ref{eqn.stability}) holds then almost surely there  exists a directed FCFS matching of servers to cover all the customers, and this matching is unique.
\end{theorem}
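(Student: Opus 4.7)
The plan is to mirror the Loynes-style backward-limit construction used by Adan, Busic, Mairesse and Weiss for Theorem \ref{thm.infmatch1} in \cite{adan2015reversibility}, exploiting the ergodicity of the augmented chain established in Theorem \ref{thm.embedded}.

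For each positive integer $N$, I would first define $\pi_N$ to be the unique FCFS directed matching obtained by scanning the sequence $z^{-N}, z^{-N+1}, \ldots$ starting from an empty queue of unmatched customers. This truncated object is unambiguous because, given the current queue, FCFS forces a deterministic action at each position (append the item if it is a customer, or remove the earliest compatible unmatched customer if it is a server, leaving the server unmatched when no such customer exists). Let $U_N(n)$ denote the corresponding augmented chain for $n\geq -N$. The goal is to show that almost surely $\pi_N(n)$ stabilizes for every fixed $n$ as $N\to\infty$; its limit $\pi^*$ will be the desired matching, and uniqueness and coverage will follow from the same construction.

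The central step is to produce renewal positions $n_k\to-\infty$ at which $U_N(n_k)=\emptyset$ for all sufficiently large $N$. Under stability, Theorem \ref{thm.embedded} gives that $U$ is positive recurrent with $\pi(\emptyset)>0$ in the stationary distribution. I would then couple two runs of the augmented chain driven by the same tail of $(z^n)$ but started from different initial queues, and argue by a standard two-chain regeneration argument (successive returns to $\emptyset$) that they coalesce almost surely. Shift-invariance of $(z^n)$ together with the ergodic theorem then yields a positive-density set of integers $n$ at which $U_N(n)=\emptyset$ for every sufficiently large $N$. Between consecutive renewal positions the matching is determined entirely by the local $z$'s, and because the queue both enters and exits a renewal window empty, every customer inside the window is matched. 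Letting $N\to\infty$ produces $\pi^*$ and establishes coverage. Uniqueness is immediate from this structure: any other valid FCFS directed matching $\pi$ must also satisfy $U_\pi(n_k)=\emptyset$ at renewal positions (otherwise the forced FCFS dynamics would disagree with $\pi^*$ on some complete window), so $\pi$ and $\pi^*$ coincide window by window.

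The hard part will be the coalescence step, since different initial queues can interact very differently with the same incoming sequence and there is no obvious monotone coupling of the queue states (the set of unmatched customers is not monotone in the initial condition). I expect to handle this via positive recurrence, by showing that each run of $U$ visits $\emptyset$ in finite mean time from any state and then invoking a synchronized coupling at successive joint returns to $\emptyset$; the compatibility structure may also give a workable partial order on queues via the sets $\S(\{c^1,\ldots,c^\ell\})$ appearing in (\ref{eqn.embedded}) that I would exploit to simplify the coupling. Once renewal positions exist, the remainder of the argument is routine, and I expect the reversibility result suggested by the subsequent theorem in the paper to provide an alternative pathway that bypasses some of these coupling issues.
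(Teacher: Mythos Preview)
Your overall architecture---a Loynes backward construction together with a forward/backward coupling argument---is exactly the paper's. The gap is precisely where you flag it: the coalescence step. Your proposed mechanism, ``synchronized coupling at successive joint returns to $\emptyset$,'' is not available as stated: two copies of $U$ driven by the same $(z^n)$ from different initial queues need not visit $\emptyset$ simultaneously, and positive recurrence of each copy separately says nothing about joint returns. So the proof does not close on positive recurrence alone.

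The paper supplies the missing ingredient via two deterministic pathwise lemmas. A \emph{monotonicity} lemma shows that prepending a single item to a finite block increases the number of unmatched customers by at most one; a \emph{subadditivity} lemma shows that the number of unmatched customers after matching a concatenation $A'A''$ is at most the sum over the pieces. Together these give the monotone structure you thought might not exist---not on the full queue state, but on the scalar $|U|=$ number of unmatched customers. If $M_0<M_1<\cdots$ are the successive times at which the stationary chain $U^*$ hits $\emptyset$, then $|U^{[0]}(M_\ell)|$ is nonincreasing in $\ell$, since each block $(z^{M_{\ell-1}+1},\ldots,z^{M_\ell})$ contributes zero unmatched customers on its own. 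Moreover, at each $M_\ell$, with probability at least $\min_j \beta_{s_j}$ the next item is a server compatible with the first unmatched customer of $U^{[0]}$, which forces $M_{\ell+1}=M_\ell+1$ and $|U^{[0]}(M_{\ell+1})|=|U^{[0]}(M_\ell)|-1$. This yields forward coupling in finite mean time; backward coupling and uniqueness then go through essentially as you outline. Thus your plan is correct in outline but cannot be completed without the monotonicity/subadditivity step; the partial order you speculate about via the sets $\S(\{c^1,\ldots,c^\ell\})$ is not what the paper uses and would be harder to make rigorous than the scalar count $|U|$.
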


We define an exchange transformation for the FCFS infinite directed  matching model:
\begin{definition}
For the FCFS infinite directed  matching model, if all matches were made on $\ldots,z^{-1},z^0,z^1,\ldots$, we define the exchanged sequence $\ldots,\tz^{-1},\tz^0,\tz^1,\ldots$ as follows:  If $z^m=c_i$ was matched to $z^n=s_j$, where $m<n$, then in the exchanged sequence we will have $\tz^m=s_j$, $\tz^n=c_i$.  If $z^n=s_j$ was unmatched, then $\tz^n=z^n$.
\end{definition}

\begin{figure}[htbp]
   \centering
   \includegraphics[width=2.9in]{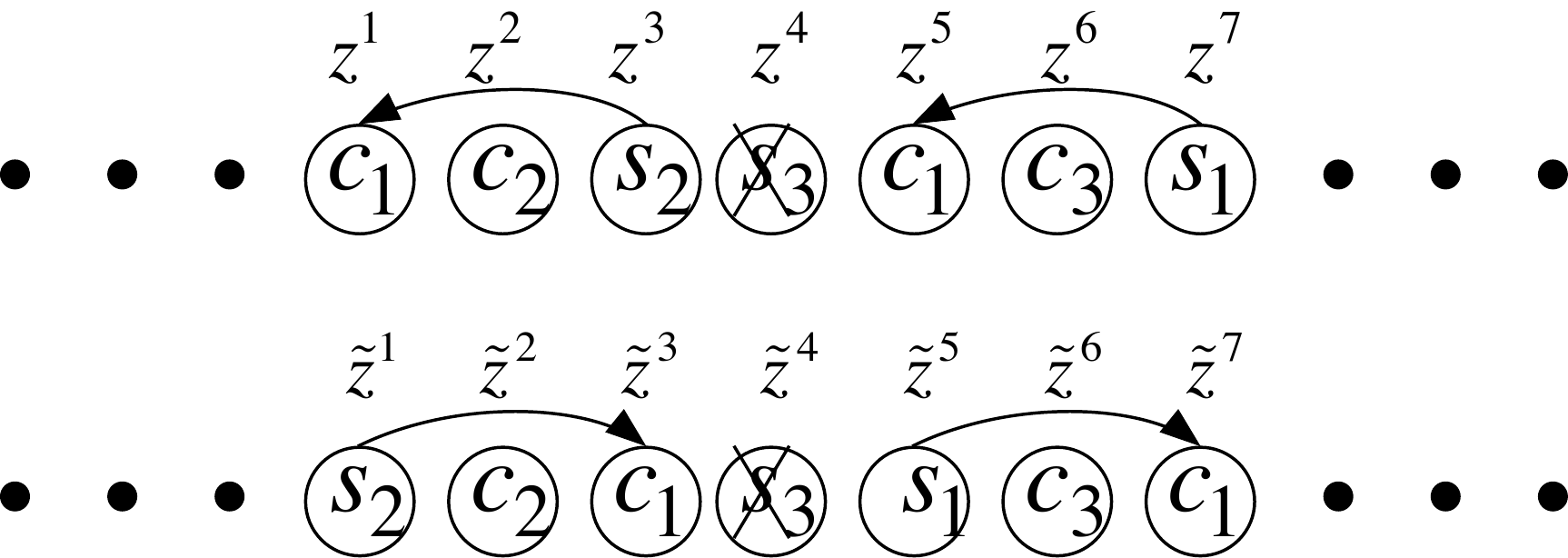} 
   \caption{directed matching and its reversal}
   \label{fig.interleaved}
\end{figure}

Figure \ref{fig.interleaved} describes directed matching for a window of time  in the doubly infinite  sequence of customers and servers on the top panel.  In it, $z^3=s_2$ is matched with earlier $z^1=c_1$, and $z^7$ is matched with $z^5$, while $z^2=c_2,\,z^6=c_3$ are not yet matched, and $z^4=s_3$ will remain unmatched for ever.  
On the bottom panel of  Figure \ref{fig.interleaved} we see the exchange transformation of the top panel, with the matchings directed in reversed time. 

\begin{theorem}
\label{thm.infbackmatch2}
The sequence  $\ldots,\tz^{-1},\tz^0,\tz^1,\ldots$ obtained from the sequence 
$\ldots,z^{-1},z^0,z^1,\ldots$ by the exchange transformation is an i.i.d. sequence.  The unique directed matches for the new sequence performed in reversed time, result in exactly the reversed matches of the original sequence, almost surely.
\end{theorem}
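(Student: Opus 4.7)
The plan is to deduce the theorem from a reversibility property of the augmented Markov chain $U(n)$ defined just before the statement. First I would verify that, under the stability condition (\ref{eqn.stability}), $U(n)$ is positive recurrent; this follows from the ergodicity of $X^{\downarrow\infty}(n)$ (Theorem \ref{thm.embedded}) together with a short argument bounding the number of unmatched servers that can be stored between two consecutive unmatched customers. Next I would guess the stationary distribution $\pi$ of $U(n)$ by extending the product formula (\ref{eqn.embedded}) with a factor proportional to $\mu_{s_j}$ for every server that is stored as an unmatched entry between the unmatched customers, and verify $\pi$ by checking the global balance equations against the four forward transition types: (a) appending an arriving customer to the right; (b) an arriving server matching and removing an interior unmatched customer; (c) an arriving server removing the leftmost customer $u^1$ and triggering the cleanup of all unmatched servers that sit between $u^1$ and the next unmatched customer; and (d) an arriving server being appended to the right because no compatible customer is present in the state.

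The core of the argument is to compute the time-reversal of $U(n)$ under $\pi$ and identify its dynamics with the directed FCFS matching rule applied to the exchanged sequence $\ldots,\tz^{-1},\tz^0,\tz^1,\ldots$ read from right to left. I would do this by inverting each of the four forward transitions case by case and matching it to the corresponding reversed-time matching step. A matched pair $(z^m,z^n)$ swaps roles under exchange, so that in reversed time $\tz^m=s$ is a server that gets matched to the later customer $\tz^n=c$; an ever-unmatched server is fixed by the exchange and appears as an isolated unmatched server in both time directions. Since the forward driver $z^n$ is i.i.d. from the law on $\C\cup\S$ with weights $\alpha_{c_i},\beta_{s_j}$ and the forward update is a deterministic measurable function of state and input, reversibility forces the reversed driver $\tz^n$ to be i.i.d. with the same marginal law, which is claim (1). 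Claim (2) then follows because the reversed dynamics, having been identified with the directed matching rule on $\tz$ in reversed time, must by Theorem \ref{thm.infbackmatch1} (applied to $\tz$, whose law is unchanged so that stability is preserved) produce the unique directed matching of $\tz$ in reversed time, and by construction this matching coincides with the time-reversal of the original matching.

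The main obstacle is the bookkeeping in case (c), the cleanup transition: here a single forward step removes several items from the state at once, and the corresponding reversed step must reinsert $u^1$ together with the discarded servers at the correct positions and with the correct types. Verifying that this information is recoverable from the reversed state and the single exchanged input alone is the delicate point; it amounts to checking that the discarded servers form the unique maximal run of servers incompatible with $u^1$ that sits between $u^1$ and the next unmatched customer, which is exactly what the directed matching rule on $\tz$ in reversed time produces at that step. Once this case is settled, the remaining cases are routine translations of the forward rules and the two claims drop out together.
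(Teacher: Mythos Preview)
Your approach has a genuine gap, and it lies precisely in the case you flag as the main obstacle.  The one-step time reversal of the Markov chain $U(n)$ cannot be identified with the directed matching rule on $\tz$ in reversed time, because in case~(c) the forward step removes several items from $U$ in a single step while the reversed matching rule on $\tz$ processes exactly one item $\tz^n$ per step.  Concretely: if $U(n-1)=(c_i,s_{j_1},\ldots,s_{j_{k-1}},c_{i'},\mbox{rest})$ and $z^n=s_j\in\S(c_i)$, then $U(n)=(c_{i'},\mbox{rest})$, and the reversed transition from $U(n)$ to $U(n-1)$ must reinstate $c_i$ \emph{together with} the server types $s_{j_1},\ldots,s_{j_{k-1}}$.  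These server types are not determined by the pair $(U(n),\tz^n)$: we have $\tz^n=c_i$, so the only datum available is the customer type, and the reversed kernel assigns positive probability to every choice of $k\ge 1$ and every tuple of server types incompatible with $c_i$.  Thus the reversed chain of $U$ is not a deterministic update driven by a single i.i.d.\ symbol, and your inference that ``reversibility forces the reversed driver $\tz^n$ to be i.i.d.'' cannot be drawn from the detailed-balance computation you propose.  Your suggested fix---that the discarded servers are ``the unique maximal run of servers incompatible with $u^1$''---describes a property those servers have, but it does not recover their \emph{types}; many runs of distinct server types satisfy that description.  In the reversed matching on $\tz$ those servers occupy positions strictly to the left of $n$ and are processed at later reversed-time steps, not ``at that step.''

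The paper avoids this difficulty by never attempting a one-step reversal of $U$.  Instead it works with \emph{perfect blocks}: segments between successive visits of $U$ to the empty state.  Two short lemmas show that (i) the exchange transformation maps a perfect block to a perfect block of the reversed directed matching, and (ii) the conditional law of a perfect block depends only on the multiset of types it contains, which the exchange transformation preserves.  This establishes invariance of the Palm measure $\fP^0$ (the law of $(U,z,v)$ conditioned on $U(0)=\emptyset$) under exchange-plus-time-reversal; passing from the Palm measure to the time-stationary measure then gives $\psi\fP=\fP$, from which both claims follow (the second via Theorem~\ref{thm.infbackmatch1}, as you note).  The block decomposition is exactly what packages the multi-item cleanup into a single symmetric object and makes the reversibility argument go through.
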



\section{Embeddings and a Version of Burke's Theorem}
\label{sec.surprise}
We have already noticed, and used it in the proof of Theorem \ref{thm.embedded}, that the process $X^{\downarrow\infty}(n)$ is the jump process of the continuous-time matching queue process $X^m(t)$.  
Likewise, by Theorem \ref{thm.equiv1} it is the jump process of $X^r(t)$.

We now use this embedding to prove a version of Burke's Theorem for the FCFS parallel service system under the redundancy policy, and for the FCFS parallel servers matching queue.

\begin{theorem}
\label{thm.inputoutput}
Let $D_{c_i}(t),\, i=1,\ldots,I$ be the departure process of customers of type $c_i$ from the  stationary parallel FCFS  Redundancy Service queue, or from the stationary parallel FCFS Matching queue.  

(i) $D_{c_i}(t)$ are independent Poisson processes of rates $\lambda_{c_i}$.

(ii)  The ordered sequence of customers in the system at time $t$ for either system, given by $X^m(t)$ or $X^r(t)$, is independent of past departures,  $D_{c_i}(s),\, i=1,\ldots,I$ for all  $s<t$.
\end{theorem}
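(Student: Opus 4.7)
The plan is to embed the matching queue $X^m(t)$ into the FCFS infinite directed matching model of Section \ref{sec.embedded} and exploit the reversibility result of Theorem \ref{thm.infbackmatch2}; the redundancy case will then follow from the sample-path equivalence in Theorem \ref{thm.equiv1}. First I would superpose the independent Poisson customer and server arrival streams into a single Poisson point process on $\mathbb{R}$ of rate $\rlambda+\rmu$, each point being independently labelled a customer of type $c_i$ with probability $\lambda_{c_i}/(\rlambda+\rmu)$ or a server of type $s_j$ with probability $\mu_{s_j}/(\rlambda+\rmu)$. This is a continuous-time realisation of the i.i.d.\ labelled sequence driving the infinite directed matching model, and the dynamics of $X^m(t)$ are precisely those induced by the directed FCFS matches on this stream. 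Applying the exchange transformation of Section \ref{sec.embedded} and invoking Theorem \ref{thm.infbackmatch2}, the exchanged labelled stream is again an i.i.d.\ labelled Poisson stream with the same marginals, and directed FCFS matching performed on it in reversed time recovers the original matches reversed. Concretely, under the exchange the label at the forward position $t_n$ of each matched original server $s^n$ is replaced by a customer label of the type of its matched customer $c^m$, while unmatched servers keep their labels.

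For part (i), a forward-time departure of a type-$c_i$ customer at time $s$ is, under the exchange, precisely a customer mark of type $c_i$ at forward position $s$ in the exchanged stream. Hence the departure process $D_{c_i}(t)$ coincides with the counting process of type-$c_i$ customer marks in the exchanged stream; by Theorem \ref{thm.infbackmatch2} the latter are independent Poisson processes of rates $\lambda_{c_i}$, establishing (i).

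For part (ii) I would give a ``future-looking'' description of $X^m(t)$: each $c^m \in X^m(t)$ matches in forward time some server arriving at $t_n>t$, and under the exchange the position $t_n$ carries a customer mark of the type of $c^m$. Performing reverse-directed FCFS matching on the exchanged stream restricted to the window $[t,\infty)$, the unmatched survivors are precisely those exchanged customer marks at positions $\ge t$ whose global reverse-matching server lies at position $<t$, and their types are exactly the types appearing in $X^m(t)$. Thus $X^m(t)$ is a deterministic function of the exchanged stream on $[t,\infty)$, while by part (i) the past departures $\{D_{c_i}(s):s<t,\,i=1,\ldots,I\}$ are a deterministic function of the exchanged stream on $(-\infty,t)$. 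The independent-increments property of the i.i.d.\ labelled Poisson exchanged stream then gives the required independence. The main obstacle is the bookkeeping in part (ii): one must verify that reverse-directed FCFS matching restricted to $[t,\infty)$ is well-defined and that its unmatched survivors stand in bijective type-correspondence with $X^m(t)$, which rests on the uniqueness of directed FCFS matching (Theorem \ref{thm.infbackmatch1}) together with the fact that reverse-directed matches never pair an exchanged server at position $\ge t$ with an exchanged customer at position $<t$.
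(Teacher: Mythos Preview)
Your approach is essentially the paper's: embed $X^m(t)$ (equivalently $X^r(t)$, via Theorem~\ref{thm.equiv1}) in the infinite directed matching model, apply the exchange transformation and Theorem~\ref{thm.infbackmatch2} so that departures become the customer marks of an i.i.d.\ labelled Poisson stream, and read off (i) directly while obtaining (ii) from the fact that the state at time $t$ sits on the ``future'' side of the exchanged stream. Your argument for (ii) is somewhat more explicit than the paper's---which phrases the same independence via the stochastic identity of $X^m$ and the reversed process $\overleftarrow{X}^m$ together with the trivial independence of a matching-queue state from its future arrivals---but the substance is identical.
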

\begin{proof}
We will use the reversibility result of $X^{\downarrow\infty}(n)$ in Theorem \ref{thm.infbackmatch2}.

We consider a  path of $X^m(t),\,-\infty<t<\infty$ (the same goes for $X^r(t)$).  The sample path is determined  by the doubly infinite sequence of arriving customers and servers, $\ldots,z^{-1},z^0,z^1,\ldots$, and the path of the independent   Poisson process of rate $\rlambda+\rmu$, which determines the arrival time of $z^n$ at $t_n$.  The sequence then determines a sample path of the FCFS infinite directed bipartite matching process $X^{\downarrow\infty}(n)$, with the relation that $X^m(t) =  X^{\downarrow\infty}(n)$ in the interval $[t_n,t_{n+1})$.  Consider now the FCFS infinite directed matching for the sequence 
$\ldots,z^{-1},z^0,z^1,\ldots$.   In the matching process, if $z^m=c_i$ is matched to $z^n=s_j$ where $m<n$, then a customer of type $c_i$  arrived at time $t_m$, and a server $s_j$  arrived at $t_n$ and was matched to that customer, and so the customer of type $c_i$ that arrived at time $t_m$ departed at time $t_n$.  

Now we perform the exchange transformation, so we now have the exchanged sequence $\ldots,\tz^{-1},\tz^0,\tz^1,\ldots$.  We then proceed with FCFS directed matching for the exchanged sequence, in reverse order.  By Theorem \ref{thm.infbackmatch2}  in this FCFS directed matching in reverse order, the same pairs will be matched (almost surely), so now $\tz^n=c_i$ is matched with $\tz^m=s_j$.   

Consider now the process $X^m(t)$, and its jump process $X^{\downarrow\infty}(n)$.  Take   the  exchange transformation of the sequence of FCFS directed matchings,  reverse the discrete time, and perform FCFS directed matching,  to obtain the discrete time exchanged and reversed process $\overleftarrow{X}^{\downarrow\infty}(n)$, and using the reversed sequence of time intervals between jumps in $X^m(t)$, construct from  $\overleftarrow{X}^{\downarrow\infty}(n)$ the continuous-time process  $\overleftarrow{X}^{m}(t)$.

By Theorem  \ref{thm.infbackmatch2}, the stationary $X^{\downarrow\infty}(n)$ and the stationary $\overleftarrow{X}^{\downarrow\infty}(n)$ are stochastically identical.  The Poisson process of arrival with rates $\rlambda+\rmu$ is time reversible and so  $X^m(t)$ and $\overleftarrow{X}^{m}(t)$ are stochastically identical.    in particular, the sequence of arrivals of $\overleftarrow{X}^{m}(t)$ consists of independent Poisson process of arrivals of customers of type $c_i$ at rates $\lambda_{c_i}$, and the state of the process, $X^m(t)$ is independent of the arrivals at all time $s>t$.  But these arrivals are exactly the departures of $X^m(t)$ in reversed time.  This completes the proof.
\end{proof}

\begin{corollary}
Networks of parallel service systems under the redundancy service policy, as well as networks of parallel matching queues have product form stationary distributions.
\end{corollary}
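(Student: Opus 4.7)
The plan is to mirror the classical Burke/Kelly argument by which output-theorem plus state-independence upgrades single-station product form to network product form, with Theorem \ref{thm.inputoutput} playing the role of Burke's theorem for a single M/M/1 queue. I will first consider the feedforward case, which is the cleanest, and then indicate how quasi-reversibility gives the general networks.

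First I would make the network precise. A network consists of a finite collection of stations, each station being either a FCFS redundancy service system or a FCFS parallel matching queue with its own customer-type set, server-type set, compatibility graph, and exponential rates. There are independent Poisson external arrivals of each customer type at each station, and each departing customer is routed to another station (or exits) according to Markovian routing probabilities that depend only on its type; with different customer types treated as different routed commodities, routing independently thins each Poisson stream into independent Poisson substreams.

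Next I would do the feedforward case by induction along a topological order of the stations. For the base case (a station with no upstream neighbors), Theorems \ref{thm.redundancy} and \ref{thm.matchqueue} give that its own stationary distribution has the stated product form in the sequence of customers in system, and Theorem \ref{thm.inputoutput}(i) says its per-type departure streams are independent Poisson of the correct rates, while (ii) says the current internal state is independent of the past departure streams. At a downstream station, the total input of each type is the superposition of external Poisson arrivals with Poisson substreams routed from upstream stations; by the independence-of-past-departures clause of Theorem \ref{thm.inputoutput}(ii), the combined input process is Poisson and independent of the current states of all upstream stations in equilibrium. Hence, conditionally on the upstream states, the downstream station is, in equilibrium, an isolated redundancy (or matching) system driven by Poisson inputs, and its marginal stationary law is of the stated product form and independent of the upstream marginals. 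Multiplying over the topological order yields the product-form joint distribution.

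The main obstacle is extending this to networks with feedback, where the departure stream of a station may loop back through other stations and return as part of its own input, so one cannot simply argue by a topological order. The standard remedy, originally due to Kelly, is to observe that Theorem \ref{thm.inputoutput} is precisely the quasi-reversibility property for each station: for every type $c_i$, the per-type departure process is Poisson with the same rate as the per-type input process, and the state at time $t$ is independent of past per-type departures. Quasi-reversible stations satisfying the standard traffic equations compose into networks with product-form stationary distributions, since the time-reversed process has Poisson arrivals with reversed routing and each station is again an isolated quasi-reversible system in equilibrium. I would therefore state the corollary in its general Jackson-Kelly form (routing may include feedback, provided the per-station traffic equations have a solution satisfying the stability condition \eqref{eqn.stability} at every station) and point to Theorem \ref{thm.inputoutput} as the quasi-reversibility input that drives Kelly's composition theorem; the rest is the standard time-reversal/product-measure verification and requires no new calculation.
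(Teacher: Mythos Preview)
Your proposal is correct and takes essentially the same approach as the paper: the paper's proof simply observes that Theorem~\ref{thm.inputoutput} is exactly the quasi-reversibility property and then cites Kelly/Walrand for the product-form network result, which is precisely the content of your final paragraph. Your feedforward warm-up is additional (and sound) but not needed, since the quasi-reversibility route already covers it.
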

\begin{proof}
  This version of Burke's Theorem, as given by Theorem \ref{thm.inputoutput}, implies that the process   $X^{\downarrow\infty}(n)$ is quasi-reversible.   It is proven in \cite{kelly2011reversibility,walrand1988introduction} that networks of quasi reversible Markovian systems have a product form stationary distribution. 
\end{proof}

Another consequence of the embedding is a relaxation of the Poisson-exponential assumptions.

\begin{theorem}
\label{thm.relax}
The stationary distribution of the FCFS matching queue $X^m(t)$, at times $t$ immediately following transitions, remains the same as given in (\ref{eqn.matchingqueue}) if the arrivals are a general stationary point process,  as long as types of arrivals are i.i.d. so that each arrival is a customer of type $c_i$ with probability $\frac{\lambda_{c_i}}{\rlambda + \rmu}$, and it is a server of type $s_j$ with probability $\frac{\mu_{s_j}}{\rlambda + \rmu}$.
\end{theorem}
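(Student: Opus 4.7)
The strategy is to exploit the fact that $X^m(t)$ is determined state-wise by the sequence of arrival \emph{types} alone; the inter-arrival times only control \emph{when} transitions occur, not \emph{what} happens. So the entire proof amounts to identifying the sequence of post-transition states with the discrete-time Markov chain $X^{\downarrow\infty}(n)$ of Theorem \ref{thm.embedded}, whose dynamics depend only on the i.i.d. type sequence.

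Concretely, I would let $t_1<t_2<\cdots$ be the transition epochs of the general stationary point process and define $Z(n)=X^m(t_n^+)$. The update from $Z(n-1)$ to $Z(n)$ reads off only the type $z^n$ of the $n$-th arrival: if $z^n=c_i$, append $c_i$ to the ordered queue; if $z^n=s_j$, delete the oldest compatible customer from the queue (or leave the queue unchanged if no compatible customer is present). This is verbatim the transition rule of $X^{\downarrow\infty}(n)$ from Section \ref{sec.embedded}. By hypothesis, $z^1,z^2,\ldots$ is i.i.d.\ with $\Prob(z^n=c_i)=\lambda_{c_i}/(\rlambda+\rmu)=\alpha_{c_i}$ and $\Prob(z^n=s_j)=\mu_{s_j}/(\rlambda+\rmu)=\beta_{s_j}$, which is precisely the distribution used in Theorem \ref{thm.embedded}. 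Consequently $\{Z(n)\}$ has the same transition kernel and the same driving law as $X^{\downarrow\infty}(n)$, so under the stability condition (\ref{eqn.stability}) it is ergodic with stationary distribution (\ref{eqn.embedded}), which is identical to (\ref{eqn.matchingqueue}).

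To connect this back to the continuous-time claim, I would observe that the quantity asserted to have distribution (\ref{eqn.matchingqueue}) is exactly the Palm distribution of $X^m(\cdot)$ with respect to the transition point process, and under the stationarity of the arrival process this coincides with the invariant law of the embedded chain $\{Z(n)\}$. The equivalence of matching-queue and redundancy-queue dynamics from Theorem \ref{thm.equiv1} uses only the type sequence, so the same statement transfers without change to $X^r(t)$ if one wishes.

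There is essentially no obstacle here: the whole content of the theorem is the observation that the jump chain is a function of types alone, together with the preservation of ergodicity under i.i.d.\ sampling. The one point that should be stated carefully is that the distribution (\ref{eqn.matchingqueue}) is the Palm distribution at transition epochs rather than the time-stationary distribution of $X^m(t)$; for a general stationary arrival point process these need not agree, because the inversion formula relating them involves the (not necessarily exponential) inter-transition distribution. With Poisson arrivals the two coincide by PASTA, recovering the original Theorem \ref{thm.matchqueue}.
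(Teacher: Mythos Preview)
Your proposal is correct and follows essentially the same approach as the paper: both identify the post-transition state sequence with the jump chain $X^{\downarrow\infty}(n)$, observe that its dynamics depend only on the i.i.d.\ type sequence, and invoke Theorem~\ref{thm.embedded} to conclude. Your version is in fact more careful than the paper's short proof, since you explicitly flag that (\ref{eqn.matchingqueue}) is the Palm distribution at transition epochs rather than the time-stationary law, a distinction the paper leaves implicit.
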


\begin{proof}
Consider the FCFS matching queue model, when arrivals are a stationary point process, 
and the arrival types are i.i.d. as above.  Let $t_n,\,n = \ldots,-1,0,1,\ldots$ denote the arrival times.  Then at the times $t_n$ the ordered sequence of customers in the system, given by $X^m(t_2)$  is exactly the state of $X^{\downarrow\infty}(n)$, where in this FCFS infinite directed matching model arrivals are i.i.d., and so the stationary distribution of $X^{\downarrow\infty}(n)$ is given by (\ref{eqn.embedded}), which is the same as (\ref{eqn.matchingqueue}).  So the stationary distribution of $X^m(t)$ at the times $t_n$ is given by (\ref{eqn.matchingqueue}). 
\end{proof}

\subsection{Another Interpretation of the Matching Queue}
In our description and interpretation of the parallel matching queue we thought of customers waiting for servers, and arriving servers match to the oldest waiting compatible customer, or are lost.  This system is stable when $\rmu > \rlambda$.  It may describe a situation in which patients are waiting for a transplant of an organ, and patients have enough patience to wait for the right organ to arrive, and the supply of organs is sufficient, but organs cannot be conserved.

In reality the situation may be different, the organs may be conserved for a while, but there are more patients than organs.  So now $\rlambda > \rmu$, and patients may be lost.  We now consider the following process and policy:  servers arrive and queue up waiting for customers, customers arrive, and each arriving customer then matches to the longest waiting compatible server and leaves immediately, or if no compatible server is found, the customer leaves immediately without a match.  All we did in this model is to switch the roles of customers and servers, and all the results of Sections \ref{sec.matchqueue} and \ref{sec.surprise}  hold, with $c_i$ and $s_j$ switching roles.  Denote by $Y^m(t)=(s^1,\ldots,s^L)$ the process that records the ordered sequence of available servers at time $t$, with $s^1$ the longest waiting.
Then the stationary distribution of  $Y^m(t)$ is given by:
\begin{equation}
\label{eqn.transplant}
P(Y^m(t) = s^1,\ldots,s^L) \propto \prod_{\ell=1}^L \frac{\mu_{s^\ell}}{\lambda_{\C(\{s^1,\ldots,s^\ell\})}}.
\end{equation}

\subsection{Embedding the FCFS-ALIS queues in an Infinite Matching Model}
The process $X^q(t)$ can also be embedded in an infinite matching model, by considering the same sequences $\ldots,z^{-1},z^0,z^1,\ldots$, but using a different matching mechanism:  We now  match each successive server $z^n=s_j$ to the earliest unmatched compatible customer $z^m=c_i$ where $m < k$ and $k$ is the earliest position in the sequence with $k>n,\,z^k=s_j$.  If no such match exists, the server $z^n$ remains unmatched.   

We define the process $X^{q\infty}(n)$ to describe the system after all possible matches that involve server $z^k$ and customer $z^\ell$ for all $k,\ell \le n$ have been made.    
Then $X^{q\infty}(n)=(c^1,c^2,\ldots,c^L,s^1,\ldots,s^K)$.  Here $c^1,c^2,\ldots,c^L$ are the types of the customers in positions $\le n$ that are still unmatched, ordered as they appeared in the sequence, and $s^1,\ldots,s^K$ are the types of servers in positions $\le n$ that have not been matched but may still be matched to a customer later in the sequence, ordered as they appeared in the sequence.  Note that any of $c^1,c^2,\ldots,c^L$ are incompatible with any of $s^1,\ldots,s^K$, and that the server types $s^1,\ldots,s^K$ are all different, so that $K\le J$.
   
One can  see that this process is the discrete time jump process of $X^{q}(t)$, and analogues of Theorems \ref{thm.embedded} and \ref{thm.infbackmatch1} hold.

\appendix
\section*{Appendix: Completion of Proofs}
\section{FCFS-ALIS stationary distribution}
\label{sec.partialbalance}
\begin{proof}[Proof of Theorem \ref{thm.fcfsalis}]
The proof is by verifying that (\ref{eqn.fcfsalis}) satisfies partial balance.  It is similar to the proof of Theorem \ref{thm.adanweiss} given in \cite{visschers2012product,adan2014skill}, and to the proof of Theorem \ref{thm.redundancy} given in \cite{gardner2016queueing}. 

We consider a state  $x = (c^1,\ldots,c^L,s^1,\ldots,s^K)$.  
We list transitions in and out of the state $x$ and their rates:
\begin{compactenum}[(i)]
\item   
Transition out of $x$ due to arrival of type $c_i$,  that joins the queue,  rate $\lambda_{c_i}$,
where $c_i \not\in \C(\{s^1,\ldots,s^K\})$
\item
Transition out of $x$ due to arrival of type $c_i$,   that matches to one of the idle servers, at rate 
$\lambda_{\C(\{s^1,\ldots,s^K\})}$.  
\item
Transition out of $x$ due to completion of service, where server type $s_j$ becomes idle, at rate:
$\mu_{s_j}$, for $s_j \not\in S(\{c^1,\ldots,c^L\})$.
\item
Transition out of $x$ due to completion of service and start of service of a waiting customer, at rate:  $\mu_{\S(\{c^1,\ldots,c^L\})}$
\item 
Transition into state $x$ due to arrival of $c^L$, at rate $\lambda_{c^L}$.
\item
Transition into state $x$ due to an arrival that matched with idle server $s^*$ that was in position $k+1$, at rate:
$\lambda_{\C(s^*)\backslash \C(\{s^1,\ldots,s^k\})}$, where $s^* \not\in \S(\{c^1,\ldots,c^L\})$
\item
Transition into state $x$ due to a service completion, and server becoming idle, at rate $\mu_{s^K}$.
\item
Transition into state $x$ due to a service completion, where a server is starting service of a customer $c^*$ that was in position $\ell+1$, at rate:  $\mu_{\S(c^*)\backslash \S(\{c^1,\ldots,c^\ell\})}$.
\end{compactenum}

We now show by substitution of the conjectured values from (\ref{eqn.fcfsalis}), that partial balance equations hold.

{\sc $\bullet$ Balance of (iv) with (v):}
\[
\begin{array}{l} \displaystyle
 P^q(c^1,\ldots,c^L,s^1,\ldots,s^K) \times \mu_{\S(\{c^1,\ldots,c^L\})} =  \\\displaystyle
\prod_{\ell=1}^L \frac{\lambda_{c^\ell}}{\mu_{\S(\{c^1,\ldots,c^\ell\})}} 
\prod_{k=1}^K \frac{\mu_{s^k}}{\lambda_{\C(\{s^1,\ldots,s^k\})}} \times
\mu_{\S(\{c^1,\ldots,c^L\})}; \\   
\\ \displaystyle 
 P^q(c^1,\ldots,c^{L-1},s^1,\ldots,s^K) \times \lambda_{c^L} =  \\ \displaystyle
\prod_{\ell=1}^{L-1} \frac{\lambda_{c^\ell}}{\mu_{\S(\{c^1,\ldots,c^\ell\})}} 
\prod_{k=1}^K \frac{\mu_{s^k}}{\lambda_{\C(\{s^1,\ldots,s^k\})}}
\times \lambda_{c^L}.
\end{array}
\]

{\sc $\bullet$ Balance of (ii) with (vii):}
\[
\begin{array}{l} \displaystyle
 P^q(c^1,\ldots,c^L,s^1,\ldots,s^K) \times \lambda_{\C(\{s^1,\ldots,s^K\})} =  \\\displaystyle
\prod_{\ell=1}^L \frac{\lambda_{c^\ell}}{\mu_{\S(\{c^1,\ldots,c^\ell\})}} 
\prod_{k=1}^K \frac{\mu_{s^k}}{\lambda_{\C(\{s^1,\ldots,s^k\})}} \times
\lambda_{\C(\{s^1,\ldots,s^K\})}; \\  
\\ \displaystyle 
 P^q(c^1,\ldots,c^L,s_1,\ldots,s^{K-1}) \times \mu_{s^K} =  \\ \displaystyle
\prod_{\ell=1}^L \frac{\lambda_{c^\ell}}{\mu_{\S(\{c^1,\ldots,c^\ell\})}} 
\prod_{k=1}^{K-1} \frac{\mu_{s^k}}{\lambda_{\C(\{s^1,\ldots,s^k\})}}
\times \mu_{s^K}.
\end{array}
\]

{\sc $\bullet$ Balance of (i) with (viii):}

For $c_i \not\in \C(\{s^1,\ldots,s^K\})$
\[
\begin{array}{l} \displaystyle
 P^q(c^1,\ldots,c^L,s^1,\ldots,s^K) \times \lambda_{c_i} =  \\\displaystyle
\prod_{\ell=1}^L \frac{\lambda_{c^\ell}}{\mu_{\S(\{c^1,\ldots,c^\ell\})}} 
\prod_{k=1}^K \frac{\mu_{s^k}}{\lambda_{\C(\{s^1,\ldots,s^k\})}} \times\,
\lambda_{c_i}; \\  
\\ \displaystyle
\sum_{\ell=0}^L  P^q(c^1,\ldots c^\ell, c_i, c^{\ell+1},\ldots,c^L,s^1,\ldots,s^K) \\ 
\qquad \times\, \mu_{\S(c_i)\backslash \S(\{c^1,\ldots,c^\ell\})} =
\\ \displaystyle
= \sum_{\ell=0}^L  
\prod_{j=1}^\ell \frac{\lambda_{c^j}}{\mu_{\S(\{c^1,\ldots,c^j\})}}
\times \frac{\lambda_{c_i}}{\mu_{\S(\{c_i,c^1,\ldots,c^\ell\})}} 
\\ \displaystyle
 \times \prod_{j=\ell+1}^L \frac{\lambda_{c^j}}{\mu_{\S(\{c_i,c^1,\ldots,c^j\})}}
\prod_{k=1}^K \frac{\mu_{s^k}}{\lambda_{\C(\{s^1,\ldots,s^k\})}} \\ 
\qquad \times \mu_{\S(c_i)\backslash \S(\{c^1,\ldots,c^\ell\})}.
\end{array}
\]

To show that the two expressions do indeed balance, we need to show that:
\begin{equation}
\label{eqn.show}
\begin{array}{l} \displaystyle
\prod_{\ell=1}^L \frac{1}{\mu_{\S(\{c^1,\ldots,c^\ell\})}} =
\sum_{\ell=0}^L  
\prod_{j=1}^\ell \frac{1}{\mu_{\S(\{c^1,\ldots,c^j\})}}   
\times  \frac{1}{\mu_{\S(\{c_i,c^1,\ldots,c^\ell\})}} \\\displaystyle
\times \prod_{j=\ell+1}^L \frac{1}{\mu_{\S(\{c_i,c^1,\ldots,c^j\})}}
\times \mu_{\S(c_i)\backslash \S(\{c^1,\ldots,c^\ell\})}
\end{array}
\end{equation}
which follows by  induction on $L$.  For $L=1$:
\[
\begin{array}{l} \displaystyle
\frac{1}{\mu_{\S(c_i)}}\frac{1}{\mu_{\S(c_i,c^1)}} \mu_{\S(c_i)} +
\frac{1}{\mu_{\S(c^1)}}\frac{1}{\mu_{\S(\{c_i,c^1\})}} \mu_{\S(c_i)\backslash \S(c^1)} 
\\
\\ \displaystyle
= \frac{1}{\mu_{\S(\{c_i,c^1\})}}\frac{\mu_{\S(c^1)} + \mu_{\S(c_i)\backslash \S(c^1)}}
{\mu_{\S(c_1)}}  = \frac{1}{\mu_{\S(c^1)}},
\end{array}
\]
and assuming that (\ref{eqn.show}) holds for $L-1$, we show that for  $L$:
\[
\begin{array}{l} \displaystyle
 \sum_{\ell=0}^L  
\prod_{j=1}^\ell \frac{1}{\mu_{\S(\{c^1,\ldots,c^j\})}}
\times \frac{1}{\mu_{\S(\{c_i,c^1,\ldots,c^\ell\})}} 
\\ \displaystyle
\times \prod_{j=\ell+1}^L \frac{1}{\mu_{\S(\{c_i,c^1,\ldots,c^j\})}}
\times \mu_{\S(c_i)\backslash \S(\{c^1,\ldots,c^\ell\})} 
\\ \displaystyle
= \sum_{\ell=0}^{L-1}  
\prod_{j=1}^\ell \frac{1}{\mu_{\S(\{c^1,\ldots,c^j\})}}
\times \frac{1}{\mu_{\S(\{c_i,c^1,\ldots,c^\ell\})}} 
\\ \displaystyle
\times \prod_{j=\ell+1}^{L-1}  \frac{1}{\mu_{\S(\{c_i,c^1,\ldots,c^j\})}} 
\frac{1}{\mu_{\S(\{c_i,c^1,\ldots,c^L\})}}
\times \mu_{\S(c_i)\backslash \S(\{c^1,\ldots,c^\ell\})} 
\\ \displaystyle
+ \prod_{j=1}^L \frac{1}{\mu_{\S(\{c^1,\ldots,c^j\})}}
\times \frac{1}{\mu_{\S(\{c_i,c^1,\ldots,c^L\})}} \times 
\mu_{\S(c_i)\backslash \S(\{c^1,\ldots,c^L\})} 
\\ \displaystyle
= 
\prod_{j=1}^{L-1} \frac{1}{\mu_{\S(\{c^1,\ldots,c^j\})}}\times\frac{1}{\mu_{\S(\{c_i,c^1,\ldots,c^L\})}}
\\ \displaystyle
+ \prod_{j=1}^L \frac{1}{\mu_{\S(\{c^1,\ldots,c^j\})}}
\times \frac{1}{\mu_{\S(\{c_i,c^1,\ldots,c^L\})}} \times 
\mu_{\S(c_i)\backslash \S(\{c^1,\ldots,c^L\})} 
\end{array}
\]
\[
\begin{array}{l} 
\displaystyle
= 
\prod_{j=1}^{L-1} \frac{1}{\mu_{\S(\{c^1,\ldots,c^j\})}}\times\frac{1}{\mu_{\S(\{c_i,c^1,\ldots,c^L\})}}
\left(1 +  \frac{\mu_{\S(c_i)\backslash \S(\{c^1,\ldots,c^\ell\})}}{\mu_{\S(\{c^1,\ldots,c^L\})}} \right)
\\ \displaystyle
= \prod_{j=1}^{L} \frac{1}{\mu_{\S(\{c^1,\ldots,c^j\})}}
\end{array}
\]

{\sc $\bullet$ Balance  of (iii) with (vi):}

For $s_j \not\in \S(\{c^1,\ldots,c^L\})$
\[
\begin{array}{l} \displaystyle
 P^q(c^1,\ldots,c^L,s^1,\ldots,s^K) \times \mu_{s_j}=  
 \\\displaystyle
\prod_{\ell=1}^L \frac{\lambda_{c^\ell}}{\mu_{\S(\{c^1,\ldots,c^\ell\})}} 
\prod_{k=1}^K \frac{\mu_{s^k}}{\lambda_{\C(\{s^1,\ldots,s^k\})}} \times\,
 \mu_{s_j}; \\  
\\ \displaystyle 
\sum_{k=0}^K  P^q(c^1,\ldots c^L,s^1,\ldots,s^k,s_j,s^{k+1},\ldots,s^K) \\ 
\qquad \times\, \lambda_{\C(s_j)\backslash \C(\{s^1,\ldots,s^k\})} 
\\ \displaystyle
= \sum_{k=0}^K  \prod_{\ell=1}^L \frac{\lambda_{c^\ell}}{\mu_{\S(\{c^1,\ldots,c^\ell\})}}
\prod_{i=1}^k \frac{\mu_{s^i}}{\lambda_{\C(\{s^1,\ldots,s^i\})}}
\frac{\mu_{s_j}}{\lambda_{\C(\{s_j,s^1,\ldots,s^k\})}}
\\ \displaystyle
\prod_{i=k+1}^K \frac{\mu_{s^i}}{\lambda_{\C(\{s_j,s^1,\ldots,s^i\})}}  \times\, \lambda_{\C(s_j)\backslash \C(\{s^1,\ldots,s^k\})}.
\end{array}
\]
To show that the two expressions do indeed balance, we need to show that:
\begin{equation}
\label{eqn.show2}
\begin{array}{l} \displaystyle
\prod_{k=1}^K \frac{1}{\lambda_{\C(\{s^1,\ldots,s^k\})}} =
\sum_{k=0}^K  
\prod_{i=1}^k \frac{1}{\lambda_{\C(\{s^1,\ldots,s^i\})}}   
\times  \frac{1}{\lambda_{\C(\{s_j,s^1,\ldots,s^k\})}} \\\displaystyle
\times \prod_{i=k+1}^K \frac{1}{\lambda_{\C(\{s_j,s^1,\ldots,s^i\})}}
\times \lambda_{\C(s_j)\backslash \C(\{s^1,\ldots,s^k\})}
\end{array}
\end{equation}
The proof of (\ref{eqn.show2}) is similar to the proof of (\ref{eqn.show})
\end{proof}

\section{Unique Path of the FCFS Infinite Directed  Matching Model}
\label{sec.uniquenss}
In  \ref{sec.uniquenss} and \ref{sec.reversal}, we prove properties of the FCFS directed matching of the i.i.d sequence of customer and server types $\ldots,z^1,z^2,\ldots$, where servers are only matched to previous customers, and of the Markov chain $U(n)$ of the leftover unmatched customers and servers.  We use the notation $\beta_j=\mu_{s_j}/(\rlambda+\rmu)$

\begin{proof}[Proof of Theorem \ref{thm.infbackmatch1}]
We prove the Theorem in several steps, requiring two lemmas and two propositions.
The two lemmas are pathwise results which do not depend on any probabilistic assumptions, and they prove  subadditivity and monotonicity.   Following that, Proposition \ref{thm.forward}  shows forward coupling, and Proposition \ref{thm.backcoupling} shows backward coupling.  The proof is then completed in a short paragraph.   This proof is very similar to the proof of Theorem 3 in \cite{adan2015reversibility}
\end{proof}

\begin{lemma}[Monotonicity]
\label{thm.monotonicity}
Consider a subsequence $z^1,\ldots,z^M$ of servers and customers, with all the possible FCFS matches of servers to previous customers.  Assume there are $K$  customers and $L$ servers left unmatched.  Consider now an additional element $z^0$ preceding $z_1$, and the complete FCFS matching of servers to previous customers of  $z^0,z^1,\ldots,z^M$. Then:

(i) If $z^0 = c^0$ is an additional customer,  
the  sequence $z^0,z^1,\ldots,z^M$ will have no more than $K+1$ customers and $L$  servers unmatched. 

(ii) If $z^0 = s^0$ is an additional server,  
the  sequence $z^0,z^1,\ldots,z^M$ will have exactly $K$ customers and $L+1$  servers unmatched. 
\end{lemma}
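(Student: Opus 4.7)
The plan is to dispose of case (ii) immediately and then prove case (i) by an augmenting-path (cascade) analysis. In case (ii), the prepended $z^0 = s^0$ is a server with no preceding customer, so it can never be matched under the directed FCFS rule. Since it is a server (not a customer), it does not appear in the available-customer list at any position, so subsequent FCFS decisions for $z^1,\ldots,z^M$ are identical to those in the original matching. Consequently the set of $K$ unmatched customers is unchanged and the unmatched servers are exactly the original $L$ servers together with $s^0$, giving $L+1$.

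For case (i), I would run both matching procedures in parallel, left to right, tracking the list of currently-available (unmatched) customers. At the start, the new-system list is obtained from the original by prepending $c^0$. Let $\tau_1$ be the position of the first server in $z^1,\ldots,z^M$ compatible with $c^0$. Up to step $\tau_1-1$ the two procedures make identical choices, because every server before $\tau_1$ is incompatible with $c^0$ and so FCFS skips it and selects the same candidate as in the original. At step $\tau_1$ the new system is forced to match $z^{\tau_1}$ with $c^0$, since $c^0$ is the earliest compatible candidate in the new list. If $z^{\tau_1}$ was originally unmatched, the two available lists coincide thereafter and the new matching has exactly one more match. Otherwise $z^{\tau_1}$ was originally matched to some customer $c^{i_1}$, which is now ``released'' into the new list, and a cascade begins: one locates the smallest $\tau_2 > \tau_1$ at which the freed $c^{i_1}$ becomes the FCFS-chosen candidate for $z^{\tau_2}$, releases $c^{i_2}$ (the original partner of $z^{\tau_2}$), and so on.

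Formally, I would prove by induction on position the invariant that at every intermediate step the new-system available list equals the original list with at most one extra customer, inserted at its natural sequence position. Each cascade step either terminates (when $\tau_k$ was originally unmatched, producing an augmenting path that increases the number of matches by one, so the new matching has $K$ unmatched customers and $L-1$ unmatched servers) or replaces the extra customer $c^{i_{k-1}}$ by the original partner $c^{i_k}$ of $z^{\tau_k}$. If the cascade never terminates within $z^1,\ldots,z^M$, the match counts are equal and exactly one extra customer is left unmatched, giving $K+1$ unmatched customers and $L$ unmatched servers. Either way the bounds $K' \le K+1$ and $L' \le L$ claimed in (i) hold. The main obstacle is verifying the inductive invariant rigorously: one must check that at each $\tau_k$ the new system really does match the released $c^{i_{k-1}}$ and not some other candidate, which requires careful bookkeeping of the positional order of the floating extra customer relative to the rest of the list, and in particular handling the asymmetry between a freed earlier customer and later customers that have entered the list after position $\tau_{k-1}$.
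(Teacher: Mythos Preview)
Your proposal is correct and follows essentially the same augmenting-path (cascade) argument as the paper: case (ii) is disposed of identically, and for case (i) the paper also tracks how matching $c^0$ to the first compatible server $s^{n_1}$ displaces its original partner $c^{m_1}$, which in turn seeks a new match at some $n_2>n_1$, and so on, terminating either with an originally unmatched server (yielding $K$ and $L-1$) or with an extra unmatched customer (yielding $K+1$ and $L$). Your framing via the invariant ``new available list equals the old list plus at most one extra customer'' is just a slightly more explicit bookkeeping of the same chain of displacements that the paper describes link by link; the key technical point you flag---that the displaced customer $c^{i_{k-1}}$ cannot be picked up by any server before $\tau_k$---is exactly what the paper verifies when it argues that $c^{m_1}$ cannot match any server in positions $m_1<j<n_1$.
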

\begin{proof}
Statement (ii) is trivial, $s^0$ will be unmatched and all the other links in $s^0,z^1,\ldots,z^M$ will be unchanged from $z^1,\ldots,z^M$.

To prove (i), denote $A=(z^1,\ldots,z^M)$.
In the matching of $(c^0,A)$, if $c^0$ has no match, then all the other links in the matching are the same as in the matching of $A$, so the total number of unmatched customers is $K+1$ and unmatched servers is $L$.  
If $c^0$ is matched to a server $z^n=s^n$ and $s^n$ is unmatched in the matching of $A$  then $(c^0,s^n)$ is a new link, and all the other links in the matching of  $(c^0,A)$ are the same as in the matching of $A$, so  the total number of unmatched customers is $K$ and unmatched servers is $L-1$.

If $c^0$ is matched to $z^{n_1}$ and $z^{n_1}=s^{n-1}$ was matched to $z^{m_1}=c^{m_1}$ in the $A$ matching,  then $(c^0,s^{n_1})$ is a new link, and the link $(s^{n_1},c^{m_1})$ in the $A$ matching is disrupted.  We now look for a match for  $z^{m_1}=c^{m_1}$ in the matching of $(c^0,A)$.  Clearly, $c^{m_1}$ is not matched to any of $z^j=s^j,\,m_1<j<n_1$, since in  $A$ any such server was either matched to an earlier customer, and this link is still
there in the matching of $c^0,A$, or such a server is incompatible with $c^{m_1}$; otherwise $c_{m_1}$ could not have been matched to $s^{n_1}$ in $A$.  So $c^{m_1}$ will either remain unmatched, or it will be matched to some $z^{n_2}=s^{n_2}$, where $n_2>n_1$.  In the former case, all the links of the $A$ matching except $(s^{n_1},c^{m_1})$ remain unchanged in the matching of $(c^0,A)$, and so the numbers of unmatched items in $(c^0,A)$ is $K+1$ and $L$.  In the latter case, there are again two possibilities:  If $s^{n_2}$ is unmatched in the $A$ matching, it will now be matched to $c^{m_1}$ and the $(c^0,A)$ matching will have disrupted one link and added 2 links retaining all other links of the $A$ matching, so the numbers of unmatched items are $K$ and $L-1$.  If $s^{n_2}$ is matched to $z^{m_2}=c^{m_2}$ in the $A$ matching, then the link $s^{n_2},c^{m_2}$ is disrupted, and we now look for a match for  $c^{m_2}$ in the $(c^0,A)$ matching.  Similar to $c^{m_1}$, either $c^{m_2}$ remains unmatched, resulting in $K+1$ and $L$ unmatched items in the  
$(c^0,A)$ matching, or, by the same argument as before, $c^{m_2}$ will be matched to $s^{n_3}$, where $n_3>n_2$.  Repeating these arguments for any additional disrupted links, we conclude that  we either end up with one more link, so the number of unmatched items are $K$ and $L-1$, or we have the same number of links and the number of unmatched items are $K+1$ and $L$.
\end{proof}

\begin{lemma}[Subadditivity]
\label{thm.subadditivity}
Let $A'=(z^1,\ldots,z^m)$, $A''=(z^{m+1},\ldots,z^M)$ and let $A=(z^1,\ldots,z^M)$.  Consider the complete FCFS matching of servers to earlier customers in $A'$, in $A''$, and in $A$ and  let $K',K'',K$ be the number of unmatched customers and $L',L'',L$ be the number of unmatched servers in these three matchings.  Then $K \le K'+K''$ and $L\le L'+L''$.
\end{lemma}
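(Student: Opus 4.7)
The plan is to prove $K \le K' + K''$ by induction on $|A'|$; the companion bound $L \le L' + L''$ follows automatically, since $K-L$, $K'-L'$, $K''-L''$ each equal the excess of customers over servers in the respective sequence, hence $K - K' - K'' = L - L' - L''$. The base case $|A'|=0$ is trivial. For the inductive step, strip off the first element $z^1$ of $A'$, write the remainder as $B := (z^2,\ldots,z^m)$, and consider two parallel prepends of $z^1$: one atop $BA''$, yielding $A$, and one atop $B$, yielding $A'$. Set $\Delta := K(A) - K(BA'')$ and $\Delta' := K(A') - K(B)$; the induction hypothesis $K(BA'') \le K(B) + K(A'')$ reduces everything to showing $\Delta \le \Delta'$.

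By Lemma \ref{thm.monotonicity}, $\Delta = \Delta' = 0$ if $z^1$ is a server, and $\Delta, \Delta' \in \{0,1\}$ if $z^1$ is a customer. The only case that requires work is $z^1$ a customer with $\Delta'=0$: the chain of substitutions from the proof of Lemma \ref{thm.monotonicity}(i), triggered by prepending $z^1$ to $B$, terminates at some previously-unmatched server $s^* \in B$ that becomes newly matched in the new matching. I need to show that the analogous chain triggered by prepending $z^1$ to $BA''$ also ends with a new link, hence $\Delta = 0$. I would do this by re-running the chain-of-substitutions argument in the $BA''$ buildup and showing that it coincides with the $B$-chain step-for-step up to its termination at $s^*$. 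Two facts underlie the coincidence: (i) the FCFS matching of $BA''$ restricted to $B$ is identical to the FCFS matching of $B$ alone, since each server in $B$ is matched only to earlier customers, all of which lie in $B$; and (ii) because $B$ precedes $A''$ in the concatenated sequence, the ``first compatible server following the currently displaced customer'' sought at each chain step is the same $B$-server in both buildups, whenever such a server in $B$ exists. Since $\Delta'=0$ forces the $B$-chain to reach $s^*$ entirely within $B$, facts (i) and (ii) make the larger chain track it exactly, terminate at $s^*$ (still unmatched in the $BA''$-matching by (i)), and add a new link, giving $\Delta=0$.

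The main obstacle is the a priori possibility that the chain in the larger buildup escapes into $A''$ and behaves differently, possibly yielding $\Delta > \Delta'$; the ordering observation (ii), together with the fact that the $B$-chain already terminates inside $B$ under the assumption $\Delta'=0$, is precisely what rules this out. With $\Delta \le \Delta'$ established, induction closes the argument.
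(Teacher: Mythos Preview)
Your argument is correct. The reduction of the server bound to the customer bound via $K-L=(K'-L')+(K''-L'')$ is a clean observation the paper does not make; the paper proves both inequalities simultaneously. Your inductive scheme is sound, and the heart of it---that $\Delta'=0$ forces $\Delta=0$---goes through because the directed FCFS matching of $(z^1,BA'')$ restricted to servers in $B$ coincides with the matching of $(z^1,B)$ (servers only look backward), so the substitution chain triggered by $z^1$ is literally the same chain in both, terminates at the same unmatched server $s^*\in B$, and leaves the $A''$ portion of the matching untouched. Your fact~(ii) is phrased a bit loosely (the chain does not quite seek ``the first compatible server following the displaced customer''), but the underlying claim is exactly this prefix-restriction property, and it is correct.

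The paper's route is shorter and avoids re-entering the chain mechanism of Lemma~\ref{thm.monotonicity}. It first observes that the unmatched elements of $A=(A',A'')$ coincide with those of $(\hat{A}',A'')$, where $\hat{A}'$ is the ordered list of the $K'+L'$ elements left unmatched by the $A'$-matching (this holds because every server in $\hat{A}'$ has no earlier compatible customer in $\hat{A}'$, so the $A''$-servers see exactly the same set of available customers in either concatenation). It then prepends the elements of $\hat{A}'$ to $A''$ one at a time and invokes Lemma~\ref{thm.monotonicity} as a black box at each step: each of the $K'$ customers raises the unmatched-customer count by at most one, each of the $L'$ servers raises the unmatched-server count by exactly one, giving $K\le K''+K'$ and $L\le L''+L'$ directly. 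What your approach buys is a finer structural statement (the two chains agree within $B$); what the paper's approach buys is brevity and a use of Lemma~\ref{thm.monotonicity} only through its conclusion rather than its proof.
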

\begin{proof}
Let $\hat{A}'=(\hat{z}^1,\ldots,\hat{z}^{K'+L'})$ be the ordered unmatched customers and servers from the complete FCFS matching of $A'$.  Then the FCFS matching of $(\hat{A}',A'')$ will have exactly the same ordered unmatched customers and servers as the FCFS matching of $A$.   We now construct the matching of $(\hat{A}',A'')$ in steps, starting with the matching of $(\hat{z}^{K'+L'},A'')$, next the matching of $(\hat{z}^{K'+L'-1},\hat{z}^{K'+L'},A'')$ and so on.  At each step, by Lemma \ref{thm.monotonicity}, if the added $z^j$ is a server the number of unmatched servers increases by 1, and the number of unmatched customers remains unchanged.  If the added $z^j$ is a customer the number of unmatched servers remains unchanged or decreases by 1, and  the number of unmatched customers  increases by 1 or remains unchanged.  It follows that the total number unmatched customers is $\le K'+K''$ and of unmatched servers is $\le L'+L''$.
\end{proof}

We assume  that the stability condition  (\ref{eqn.stability}) holds. 
By Theorem \ref{thm.embedded}, the augmented Markov chain of the infinite directed matching $U(n)$ is ergodic. Using the Kolmogorov extension theorem \cite{oksendal2003stochastic}, we may define (in a non-constructive way) a stationary version $U^*=(U^*(n))_{n=-\infty}^\infty$ of the Markov chain. Define also $U^{[k]}=(U^{[k]}(n))_{n= -k}^\infty$ the realization of the Markov chain that starts at $U^{[k]}(-k)=\emptyset$.

Our first task is to show forward coupling, namely that $U^*$ and $U^{[0]}$ coincide after a finite time $\tau$ with $E(\tau)<\infty$.  Following that we use standard arguments to show backward coupling and convergence to a unique matching.

\begin{proposition}[Forward coupling]
\label{thm.forward}
The two processes $(U^*(n))_{n=-\infty}^\infty$ and $(U^{[k]}(n))_{n= -k}^\infty$ will couple after a finite time $\tau$, with $E(\tau)<\infty$.
\end{proposition}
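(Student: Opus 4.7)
The plan is to drive $U^*$ and $U^{[0]}$ by the same i.i.d.\ sequence $z^1,z^2,\ldots$, identify the first epoch $\tau$ at which $U^*$ is empty (which will force $U^{[0]}$ to be empty too), and verify $E(\tau)<\infty$. Taking $k=0$ without loss of generality by time-shifting, I set up the coupling as follows: write $U^*(0)=(u^1,\ldots,u^K)$ for the stationary initial state, finite a.s.\ by Theorem \ref{thm.embedded}, and $U^{[0]}(0)=\emptyset$. For $n\ge 1$ both chains use the common sequence, so that $U^*(n)$ is the list of unmatched customers in the FCFS directed matching of the extended sequence $(u^1,\ldots,u^K,z^1,\ldots,z^n)$, whereas $U^{[0]}(n)$ is the analogous list for $(z^1,\ldots,z^n)$ alone.

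Next, I apply Lemma \ref{thm.monotonicity}(i) iteratively, stripping $u^K,u^{K-1},\ldots,u^1$ off the front of the extended sequence one at a time. Each stripping changes the number of unmatched customers by $0$ or $-1$, so $|U^{[0]}(n)|\le |U^*(n)|$, and in particular $U^*(n)=\emptyset$ forces $U^{[0]}(n)=\emptyset$. Defining
\[
\tau:=\inf\{n\ge 0:\ U^*(n)=\emptyset\},
\]
both chains occupy $\emptyset$ at time $\tau$; since their dynamics are determined by the common driving sequence, $U^*(n)=U^{[0]}(n)$ for all $n\ge\tau$, so $\tau$ is a valid coupling time.

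It remains to show $E(\tau)<\infty$. Under (\ref{eqn.stability}), $U$ is positive recurrent by Theorem \ref{thm.embedded}, and the product form (\ref{eqn.embedded}) combined with the strict stability inequalities yields geometric tails for the stationary distribution $\pi$, hence $E_\pi|U|<\infty$. Combining this with a linear drift bound $E_x(T_\emptyset)\le C|x|$ gives
\[
E(\tau)=E_\pi(T_\emptyset)\le C\cdot E_\pi|U|<\infty.
\]
The linear bound itself is to be obtained by a Foster--Lyapunov argument using an appropriately weighted norm on the customer types, so that the strict inequalities in (\ref{eqn.stability}) translate into negative drift outside a finite exceptional set, in the spirit of Theorem~3 of Adan et al.\ \cite{adan2015reversibility}.

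The principal obstacle is this last drift step. The naive Lyapunov function $V(x)=|x|$ does not have globally negative drift under (\ref{eqn.stability}): its one-step mean increment at $x$ is proportional to $\rlambda-\mu_{\S(C(x))}$, where $C(x)$ is the set of customer types present in $x$, and this can be nonnegative when $C(x)$ is a proper subset of $\C$. One therefore has to design a suitably weighted Lyapunov function, or equivalently invoke Lemma \ref{thm.subadditivity} to decompose the arrival stream into blocks that flush any bounded state in a geometrically bounded number of steps. Adapting this construction from the undirected matching model of \cite{adan2015reversibility} to the present directed setting, where servers may be lost but customers may not, is where the main technical care is required.
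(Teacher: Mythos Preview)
Your coupling argument is correct and in fact simpler than the paper's. The domination $|U^{[0]}(n)|\le|U^*(n)|$ follows from Lemma~\ref{thm.monotonicity} exactly as you say---though you should cite both parts (i) and (ii), since the augmented state $U^*(0)$ may contain unmatched servers as well as customers; in either case prepending an item never decreases the unmatched-customer count---and this immediately forces $U^{[0]}(\tau)=\emptyset$ at $\tau=M_0:=\inf\{n\ge0:U^*(n)=\emptyset\}$. The paper does \emph{not} exploit this one-sided domination. Instead it uses subadditivity (Lemma~\ref{thm.subadditivity}) to show only that $|U^{[0]}(M_\ell)|$ is non-increasing along the successive return times $M_0<M_1<\cdots$ of $U^*$ to $\emptyset$, and then argues that with probability at least $\delta=\min_j\beta_{s_j}$ it strictly decreases at each such time, so that coupling occurs after a geometric number of excursions. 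Your shortcut works precisely because the directed model is asymmetric (servers may be lost but customers must eventually be matched); the paper's longer route is carried over with little change from the symmetric bipartite model of~\cite{adan2015reversibility}, where no such one-sided bound is available.

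On $E(\tau)<\infty$: your $\tau$ coincides with the paper's $M_0$, and the paper simply asserts $E(M_0)<\infty$ ``by the ergodicity'' and uses it in the final bound $E(\tau)\le E(M_0)\cdot\frac{1}{\delta}E(M_1-M_0)$. You are right that positive recurrence alone does not automatically give $E_\pi(T_\emptyset)<\infty$ (it is equivalent to finiteness of the second moment of the return time to $\emptyset$), so strictly speaking some additional argument is needed in \emph{both} proofs. But labelling this the ``principal obstacle'' and reaching for a bespoke weighted Lyapunov function overstates the difficulty: the product form~(\ref{eqn.embedded}) and the finitely many strict inequalities~(\ref{eqn.stability}) readily give geometric tails, which closes the gap without a delicate drift construction. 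In summary, your proof is sound, shorter than the paper's on the coupling step, and appropriately more careful on the integrability step.
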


\begin{proof}  
Denote by $|u|$ the number of unmatched customers for any state $u$ of  process  $U$, we refer to it as the length of the state.   
Consider the sequence of times $0 \le M_0 < M_1 < \cdots < M_\ell,\cdots$ at which $U^*(M_\ell) = \emptyset$.  This sequence is infinite with probability 1, and $E(M_\ell) = E(M_0)+ \ell E(M_1-M_0) < \infty, \ell\ge 0$ by the ergodicity.  
Consider the state $u_0=U^{[0]}(M_0)$.  Then $|u_0| \le M_0$.   
By the monotonicity result of Lemma \ref{thm.monotonicity} and Lemma \ref{thm.subadditivity}, the states of $U^{[0]}$ satisfy
$|u_0| \ge |U^{[0]}(M_1)| \ge \ldots \ge |U^{[0]}(M_\ell)|$, i.e. the  length of the state of $U^{[0]}$ at the times $M_0,M_1,\ldots$ is non-increasing.  This is because each block of customers and servers in times between $M_{\ell-1}$ and $M_\ell$ on its own has 0 unmatched.  Furthermore, if the first unmatched customer in $U^{[0]}(M_\ell)$ is $c_i$, and 
the following item in the infinite sequence of customers and servers, $z^{M_\ell+1}$ is $s_j\in S(c_i)$, then $M_{\ell+1} = M_\ell+1$, and $|U^{[0]}(M_\ell+1)| = |U^{[0]}(M_\ell)|-1$.  This will happen with probability $\ge \delta =\min(\beta_1,\ldots,\beta_J)$.  
Hence, there will be coupling after at most  $\sum_{j=1}^{|u_0|} L_j $ perfect matching blocks of $U^*$, where $L_j$ are i.i.d. geometric random variables with $\delta$ probability of success.  So coupling occurs almost surely, and the coupling time $\tau$ satisfies $E(\tau) \le  E(M_0) \left( \frac{1}{\delta} E(M_1-M_0) \right)$.

The proof for $U^{[k]}$ is the same.
\end{proof}

Note that once $U^{[k]}$ and $U^*$ couple, they stay together forever.
We now need to show backward coupling.   

\begin{proposition}[Backward coupling]
\label{thm.backcoupling}
Let $U^*$ be the stationary version of the Markov chain $U(\cdot)$, and let  $U^{[-k]}$ be the process starting empty at time $-k$.    Then $\lim_{k\to\infty} U^{[-k]}(n) = U^*(n)$ for all $-\infty < n < \infty$ almost surely.
\end{proposition}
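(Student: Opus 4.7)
The plan is to establish the stronger statement that almost surely $U^{[-k]}(n) = U^*(n)$ for all $k$ large enough (depending on $\omega$), via a regeneration-coupling argument in the spirit of the proof of Theorem 4 in Adan et al.\ \cite{adan2015reversibility}. Couple all the processes on a common probability space by driving them with the shared doubly-infinite sequence $\ldots, z^{-1}, z^0, z^1, \ldots$. Fix an integer $n$; by the ergodicity of $U^*$ guaranteed by Theorem \ref{thm.embedded}, there almost surely exists a finite time $M=M(\omega)\leq n$ with $U^*(M)=\emptyset$.

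The key claim is that for every $k$ large enough that $-k\leq M$, one also has $U^{[-k]}(M)=\emptyset$. To see this, consider the matching of $U^*$ on the interval $(-\infty,M]$, which has zero unmatched customers. Decompose it into the two restricted matchings on $(-\infty,-k]$ (ending in $U^*(-k)$, with $K'=|U^*(-k)|$ unmatched customers) and on $(-k,M]$ (which by construction coincides with $U^{[-k]}(M)$, having $K''=|U^{[-k]}(M)|$ unmatched customers and some number $L''$ of unmatched servers). The combined matching is obtained from these two restricted matchings by prepending the $K'$ unmatched customers of the first onto the second, one at a time. By Lemma \ref{thm.monotonicity}, each such prepending sends $(K,L)$ to either $(K+1,L)$ or $(K,L-1)$. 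Hence after all $K'$ prependings the unmatched-customer count equals $K''+a$ for some $0\leq a\leq K'$; equating this to the combined value $0$ forces $a=K''=0$, i.e.\ $U^{[-k]}(M)=\emptyset$.

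Once $U^{[-k]}(M)=\emptyset=U^*(M)$, both chains subsequently process the identical items $z^{M+1},z^{M+2},\ldots$ and therefore coincide at every time $\geq M$. In particular $U^{[-k]}(n)=U^*(n)$ for every $k\geq -M$, which yields the almost sure convergence at the fixed $n$; a countable union over integer $n$ gives the simultaneous statement for all $n$.

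The main obstacle is the refinement of Lemma \ref{thm.monotonicity} implicit in the prepending step: one must verify that each additional customer changes the pair $(K,L)$ by exactly $(+1,0)$ or $(0,-1)$ and by nothing else, so that the accounting actually forces $K''=0$ rather than only the trivial bound $K''\leq K'$ that subadditivity alone would provide. This sharpening is essentially contained in the cascade argument used to prove Lemma \ref{thm.monotonicity}, and once it is noted, the rest of the proof is bookkeeping.
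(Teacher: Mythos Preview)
Your argument is correct and takes a genuinely different route from the paper's. The paper proceeds indirectly: it first invokes the forward coupling Proposition~\ref{thm.forward} to get finite coupling times $T_k$, defines $\hat T_K=\max_{k\le K}T_k$ and the events $E_K=\{\forall\ell\ge 0,\,U^{[-\ell]}(\hat T_K)=U^*(\hat T_K)\}$, and then shows $P(\overline{E_K})\to 0$ via a Borel--Cantelli--type bound $P(\overline{E_K})\le\sum_{\ell>K}P(\tau>\ell)$ using $E(\tau)<\infty$. From this it deduces $\hat T=\sup_k T_k<\infty$ a.s.\ and finishes by a stationarity shift. Your proof bypasses forward coupling altogether: you locate a regeneration time $M\le n$ of $U^*$ and use the exact dichotomy hidden in the proof of Lemma~\ref{thm.monotonicity} (prepending a customer sends $(K,L)$ to $(K{+}1,L)$ or $(K,L{-}1)$) to force $|U^{[-k]}(M)|=0$ whenever $-k\le M$. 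This is shorter and more transparent; the price is that you must extract the sharp two-case statement from the cascade argument rather than merely cite the inequality form of Lemma~\ref{thm.monotonicity}, and you implicitly use that processing from state $U^*(-k)$ is the same as processing the finite word $U^*(-k)$ prepended to $z^{-k+1},\ldots,z^M$ from $\emptyset$.

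Two small points of precision. First, the state $U^*(-k)$ contains interleaved unmatched customers \emph{and} servers, so strictly speaking you prepend all of $U^*(-k)$, not just the $K'$ customers; by Lemma~\ref{thm.monotonicity}(ii) each prepended server leaves the unmatched-customer count unchanged, so your accounting is unaffected, but it is worth saying. Second, that the unmatched servers in $U^*(-k)$ cannot match any earlier unmatched customer in $U^*(-k)$ (so prepending the whole word reproduces the state) is because any such compatibility would already have produced a match in the stationary chain; this is the step that justifies identifying ``start from state $U^*(-k)$'' with ``prepend the word $U^*(-k)$ and start from $\emptyset$''.
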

\begin{proof}
The statement of almost surely refers to the measure of the infinite sequences  $\ldots,z^{-1},z^0,z^1,\ldots$.

Define $T_k = \inf \{n\ge -k: U^{[-k]}(n) = U^*(n) \}$.  By the forward coupling Proposition \ref{thm.forward}, we get that $T_k$ is almost surely finite. 
Let $\hat{T}_K = \max_{0\le k \le K} T_k$.  It is  $\ge 0$, and is also almost surely finite 
for any $K$.  $\hat{T}_K$ is the time at which all the processes starting empty at time $-k$, where $0\le k\le K$, couple with $U^*$, and remain merged forever.
Define the event $E_K = \{ \omega : \forall \ell \ge 0,\, U^{[-\ell]}(\hat{T}_K) = U^*(\hat{T}_K) \}$, in words, those $\omega$ for which the process starting empty at any time before $0$, will merge with $U^*$ by time $\hat{T}_K$.  We claim that $P(E_K)>0$.  We evaluate $P(\overline{E}_K)$.  
For any fixed $\ell \geq 0$, call $E_{\ell,K}$ the event that $U^{[-\ell]}$ couples with $U^*$ by time $\hat{T}_K$. 
We have 
$E_K = \bigcap_{\ell \geq 0} E_{\ell,K} = \bigcap_{\ell > K} E_{\ell,K}$ (by definition of $\hat{T}_K$, $E_{\ell,K}$ is always true for $\ell\leq K$, so we only need to consider $\ell>K$), so $\overline{E_K} = \bigcup_{\ell > K} \overline{E_{\ell,K}}$.   

The event $\overline{E_{\ell,K}}$ will happen if starting at the last time prior to $-K$ at which the process $U^{[-\ell]}$ was empty,  the next time that it is empty is after time $0$.  
The reason for that is that otherwise the process $U^{[-\ell]}$ reaches state $\emptyset$ at some time  $k \in [-K,0]$ and from that time onwards it is coupled with  $U^{[-k]}$, and will couple with $U^*$ by time $\hat{T}_K$.   

Define for $\ell>K$, $D_\ell =\{\omega : O^{[-\ell]}(m) \ne \emptyset \mbox{ for all} -\ell < m \le 0\}$.  Clearly by the above, $\bigcup_{\ell > K} \overline{E_{\ell,K}} \subseteq \bigcup_{\ell > K} D_\ell$.  
Let $\tau$ denote the recurrence time of the empty state.  Then: 
\[
P(\overline{E_K}) = P(\bigcup_{\ell > K} \overline{E_{\ell,K}}) \le P(\bigcup_{\ell > K} D_\ell) \le 
\sum_{\ell > K} P(\tau > \ell).
\]
By the ergodicity $\sum_{l=0}^\infty P(\tau> l) =E(\tau)<\infty$.  Hence we have that $P(\overline{E_K}) \to 0$ as $K\to\infty$, and therefore  $P(E_K)>0$ for large enough $K$, and 
$P(E_K) \to 1$ as $K\to \infty$.  Note also that $E_K \subseteq E_{K+1}$.

Define now $\hat{T}= \sup_{k\ge 0}  T_k$.  We claim that $\hat{T}$ is finite a.s.  Consider any $\omega$. Then by $P(E_K) \to 1$ as $K\to \infty$ and by the monotonicity of $E_K$, almost surely for this $\omega$ there exists a value $\ell$ such that $\omega \in E_\ell$.  But if $\omega \in E_l$, then $\hat{T}(\omega) \le \hat{T}_\ell < \infty$.   

So,  all processes starting empty before time $0$ will couple with $U^*$ by time $\hat{T}$.  
By the stationarity of the sequences $(z^n)_{n=-\infty}^\infty$ and of $U^*$,   
we then also have that all processes $U^{[-k]}(n)$ starting empty before $-k$ will couple with $U^*$ by time 0, if $k\ge \hat{T}$.  
Hence using the Loynes' scheme of starting empty at $-k$ and letting $k\to\infty$ the constructed process will merge with $U^*$ at time 0.  But the same argument holds not just for 0, but for any negative time $-n$.   Hence $U^{[-k]}$ and $U^*$ couple at $-n$ (and stay coupled) for any $k > n +  \hat{T}$.  This completes the proof.
\end{proof}

\begin{proof}[End of proof of Theorem \ref{thm.infbackmatch1}]
We saw that $\lim_{k\to\infty} U^{[-k]}(n)=U^*(n)$ for all $n$ almost surely.  Each process $U^{[-k]}(n)$ determines matches uniquely for all $n> - k$, so if we fix $n$, matches from $n$ onwards are uniquely determined by $\lim_{k\to\infty} U^{[-k]}(n)$.  Hence $(U^*(n))_{n=\-\infty}^\infty$ determines for every   customer $z^n=c^n$ his match, uniquely, almost surely.  This proves the  theorem. 
\end{proof}

\section{Time Reversal of the FCFS Infinite Directed  Matching Model}
\label{sec.reversal}

To prove Theorem \ref{thm.infbackmatch2}, we consider blocks of the form $z^1,\ldots,z^n$ such that  all the customers in the block are matched to servers further in the sequence, we refer to those as perfect blocks.  We first show in Lemma \ref{thm.reverse} that the exchange transformation implies time reversal in each block.  Next in Lemma \ref{thm.blocks} we show that perfect blocks and their reversal have the same probability. The proof of the theorem then follows by considering Palm measure and time stationary measure of the exchanged sequence.

\begin{lemma}
\label{thm.reverse}
Let $z^1,\ldots,z^n$ be a perfect block of customers and servers, and let $\tz^1,\ldots,\tz^n$ be the block obtained from $z^1,\ldots,z^n$ by the exchange transformation.  Then 
$\tz^n,\ldots,\tz^1$ is also a perfect block.  In other words, if we have a block where all the customers are matched FCFS to servers ahead of them in the sequence, and we exchange the positions of matched pairs of customers and servers and retain the links, then the resulting matching is FCFS of servers to ahead of them in the sequence in reversed time.
\end{lemma}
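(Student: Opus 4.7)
The plan is to verify that the forward FCFS matching $M$ of $z^1,\ldots,z^n$, viewed on the reversed exchanged sequence, satisfies the FCFS rule in reverse. By the perfect-block hypothesis every position lies in some match, so $M$ is a perfect matching on $\{1,\ldots,n\}$. The exchange sends each pair $(q,p)\in M$ with $q<p$ to $\tz^q=z^p$ (a server) and $\tz^p=z^q$ (a customer); in the reversed sequence, $\tz^p$ appears at reverse time $n-p+1$ and $\tz^q$ at reverse time $n-q+1>n-p+1$, so the same edge $(q,p)$ connects a customer earlier in reverse time to a server later in reverse time. It therefore suffices to check that when the server $\tz^q$ is processed under the reverse FCFS rule it really does pick $\tz^p$.

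Compatibility is automatic since the pair corresponds to a matched pair in the original block, and $\tz^p$ is still unmatched at reverse time $n-q+1$ because the only partner of $\tz^p$ under $M$ is $\tz^q$ itself. The substantive step is to rule out any ``earlier'' candidate. I would suppose, for contradiction, that some customer $\tz^{p^{**}}$ with $p^{**}>p$ is compatible with $\tz^q$ and still unmatched at reverse time $n-q+1$. Letting $(q^{**},p^{**})\in M$ be its match, being unmatched at that instant forces $q^{**}<q$, so that the matching server $\tz^{q^{**}}$ is processed strictly later in reverse. Translating back through the exchange, this says that $z^{q^{**}}$ is a customer in the original block, compatible with $z^p$, at a forward position $q^{**}<q$.

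Now I apply the forward FCFS rule at time $p$ in the original block. Either $z^{q^{**}}$ is already matched at time $p$, in which case its partner satisfies $p^{**}<p$, contradicting $p^{**}>p$; or $z^{q^{**}}$ is still unmatched at time $p$, in which case forward FCFS should have preferred $z^{q^{**}}$ over $z^q$ (both compatible with $z^p$, with $q^{**}<q$), contradicting $(q,p)\in M$. Either branch yields a contradiction, so no such $\tz^{p^{**}}$ exists. Consequently the reverse FCFS rule produces exactly the matches in $M$, and since $M$ covers every position the reversed exchanged sequence is again a perfect block. The main obstacle is purely bookkeeping: one has to be careful that ``earlier in reverse time'' corresponds to larger forward position, after which the forward FCFS rule applied at the forward time $p$ delivers the contradiction directly.
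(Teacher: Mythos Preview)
Your argument for matched servers is correct and is essentially the paper's proof viewed from the dual side: the paper fixes a matched customer and checks that every intermediate server is either incompatible or matched to an earlier customer, while you fix a matched server and check that every earlier compatible customer is already taken. Both viewpoints work.

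There is, however, a genuine gap. Your opening claim that ``by the perfect-block hypothesis every position lies in some match'' is false. In the directed matching model a \emph{perfect block} is one in which every \emph{customer} is matched to a later server in the block; servers may remain unmatched, and the exchange transformation leaves such a server fixed ($\tz^m=z^m$). So $M$ is not in general a perfect matching on $\{1,\ldots,n\}$, and you have not verified that each unmatched server $\tz^m$ in the exchanged sequence would also be left unmatched by reverse FCFS. The paper handles exactly this situation in the first case of its proof (``If $\tz^{l'}$ is unmatched\ldots'').

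The missing case is easy to supply with your own method. Suppose $\tz^m$ is an unmatched server and some customer $\tz^{p'}$ with $p'>m$ is compatible with $\tz^m$ and still unmatched at reverse time $n-m+1$; let $(q',p')\in M$ be its match, so that $q'<m$. Translating back, $z^{q'}$ is a customer compatible with the server $z^m=\tz^m$, sitting at forward position $q'<m$, and still unmatched when $z^m$ arrives (its partner $z^{p'}$ has $p'>m$). Forward FCFS would then have matched $z^m$ to $z^{q'}$ or to an even earlier compatible customer, contradicting that $z^m$ is unmatched. With this case added, your proof is complete.
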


\begin{proof}
Consider the sequence $\tz^n,\ldots,\tz^1$, and assume that $\tz^k = c_i$ is coupled in the exchanged sequence to $\tz^l = s_j$, with $l$ ahead of $k$ in the reversed sequence, i.e.
$l<k$.  Then we look at $\tz^{l'}=s_{j'}$ with $l <l' <k$. There are two possibilities:  If $\tz^{l'}$ is unmatched, then $z^{l'}=\tz^{l'}$ because it was not exchanged.  Hence in the original sequence $z^l=c_i$ precedes $z^{l'}=s_{j'}$ precedes $z^k=s_j$.  But then $s_{j'}$ must be  incompatible with $c_i$, or else $z^l=c_i$ would have  matched with $z^{l'}=s_{j'}$ in the original sequence.  
\begin{figure}[htbp]
   \centering
\includegraphics[width=2.5in]{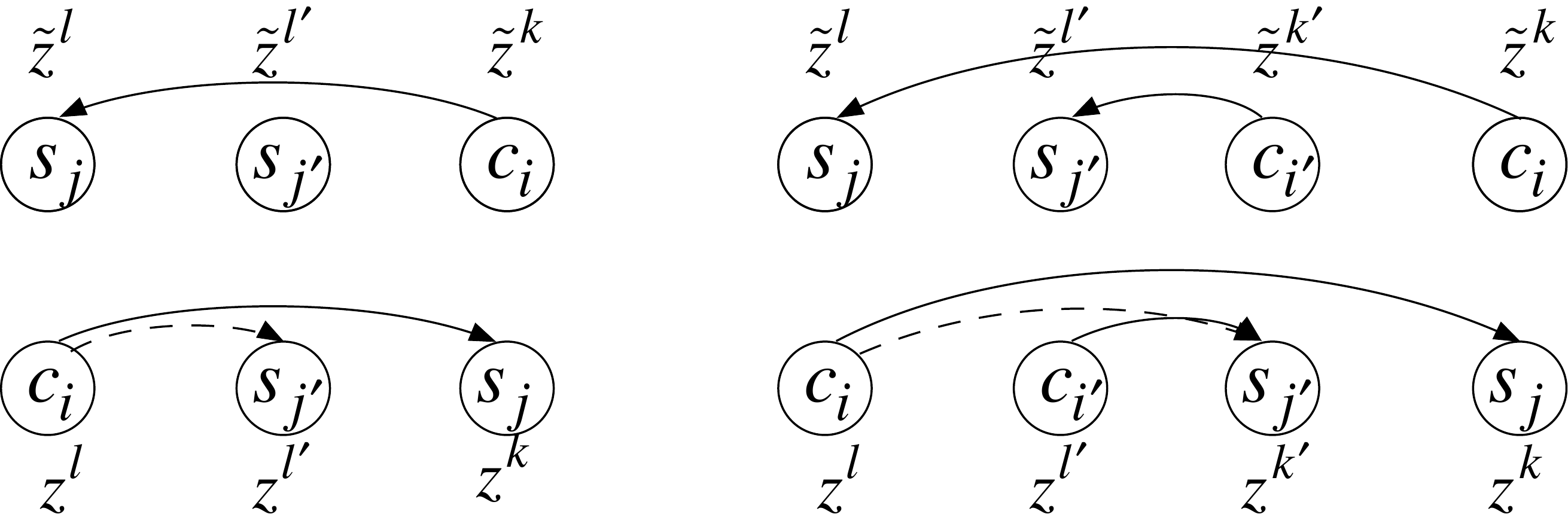}
   \caption{Illustration of the proof of time reversal}
   \label{fig.reversalproof}
\end{figure}
The other possibility is that $\tz^{l'}$ has been matched and exchanged with $\tz^{k'}=c_{i'}$.    Assume now that $s_{j'}$ is compatible with $c_i$.  Then we must show that $k' > k$.  Assume to the contrary $l<l' < k' <k$.  Then in the original sequence 
$\tz^l=c_i$ precedes $\tz^{l'}=c_{i'}$ precedes $\tz^{k'}=s_{j'}$ precedes $\tz^k=c_i$.  But then $z^l=c_i$ would have matched with $z^{k'}=s_{j'}$ in the original sequence.
This completes the proof.
The proof is illustrated in  Figure \ref{fig.reversalproof}:
\end{proof}

\begin{lemma}
\label{thm.blocks}
Consider the FCFS directed matching of $\ldots,z^{-1},z^0,z^1,\ldots$, and let  $z^{m+1},\ldots,z^{m+M}$   be the block of  customers and servers in positions $[m+1,m+M]$. Then the conditional probability of observing these values, conditional on the event that the FCFS directed matching of these values is a perfect match is: 
\begin{eqnarray*}
&& P\Big(\big(z^{m+1},\ldots,z^{m+M}\big)\Big| \big(z^{m+1},\ldots,z^{m+M}\big) \mbox{ has perfect match}\Big)  \\
&& \quad = \kappa_M    \prod_{i=1}^{I}  {\alpha_{c_i}}^{\# c_i} \prod_{j=1}^{J} {\beta_{s_j}}^{\# s_j}
\end{eqnarray*}
where $\kappa_M$ is a constant that may depend on $M$, and $\# c_i$, $\# s_j$ count the number of type $c_i$ customers and type $s_j$ servers in the block.
\end{lemma}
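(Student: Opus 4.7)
The plan is to derive the claim by a direct application of the i.i.d.\ structure of the sequence together with Bayes' rule; there is essentially no content beyond a conditional probability computation. First I would note that, by construction of the FCFS infinite directed matching model, the sequence $\ldots,z^{-1},z^0,z^1,\ldots$ is i.i.d.\ with marginal $P(z^n=c_i)=\alpha_{c_i}$ and $P(z^n=s_j)=\beta_{s_j}$. Independence across the $M$ positions $m{+}1,\ldots,m{+}M$ then gives the unconditional joint probability of any specific block realization as the product of marginals,
$$P(z^{m+1},\ldots,z^{m+M}) \;=\; \prod_{i=1}^{I}\alpha_{c_i}^{\#c_i}\,\prod_{j=1}^{J}\beta_{s_j}^{\#s_j}.$$

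Next I would observe that the event $\mathcal{E}_M$ that the FCFS directed matching of $z^{m+1},\ldots,z^{m+M}$ (viewed as a finite sequence in isolation) is perfect is measurable with respect to the block alone: matching each server, in order, to the earliest unmatched compatible preceding customer within the block is a deterministic rule, so whether the resulting matching leaves any customer unmatched depends only on the block values. By stationarity of the underlying sequence, $p_M:=P(\mathcal{E}_M)$ depends only on the length $M$. Bayes' rule then yields, for any realization satisfying $\mathcal{E}_M$,
$$P\bigl((z^{m+1},\ldots,z^{m+M})\bigm|\mathcal{E}_M\bigr)\;=\;\frac{1}{p_M}\prod_{i=1}^{I}\alpha_{c_i}^{\#c_i}\,\prod_{j=1}^{J}\beta_{s_j}^{\#s_j},$$
and setting $\kappa_M:=1/p_M$ completes the derivation.

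I would also briefly address the caveat that $\kappa_M$ be finite, i.e.\ that $p_M>0$. This is trivial for $M=0$, and for $M\geq 2$ one can exhibit a positive-probability perfect block directly (e.g., a sequence of alternating compatible customer--server pairs). In fact the ergodicity of $U(\cdot)$ from Theorem \ref{thm.embedded}, together with the forward-coupling argument in Proposition \ref{thm.forward}, guarantees that the chain returns to $\emptyset$ in finite expected time, so perfect blocks occur with positive probability. When $\mathcal{E}_M$ is the empty event (e.g., realizations forcing more customers than servers), the claim is vacuously true.

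The main obstacle, to the extent there is one, is purely conceptual: one must verify that the ``perfect'' property is indeed block-measurable and does not silently depend on matches with servers outside the window. Given the definition of directed matching this is immediate, so the argument reduces to the definition of conditional probability applied to an i.i.d.\ block --- nothing deeper than Bayes' rule is required.
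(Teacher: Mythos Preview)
Your proposal is correct and follows essentially the same route as the paper: both arguments apply Bayes' formula to the i.i.d.\ block, use that ``perfect match'' is a deterministic (indicator) function of the block values, and identify $\kappa_M$ as the reciprocal of $P(\mbox{perfect match of length }M)$. Your additional remarks on block-measurability and on $p_M>0$ are sound elaborations but not part of the paper's proof.
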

\begin{proof}
The conditional probability is calculated using Bayes formula:
\begin{eqnarray*}
&& \quad  P(\mbox{seeing $z^{m+1},\ldots,z^{m+M}$ }| \mbox{ having a perfect match}) \\
&& \quad = \frac{P(\mbox{having a perfect match }| 
\mbox{seeing $z^{m+1},\ldots,z^{m+M}$ })}
{P( \mbox{having a perfect match of length $M$})} \\
&& \qquad \times P(\mbox{seeing $z^{m+1},\ldots,z^{m+M}$ }) \\
&& \quad = \kappa_M \times \mathbf{1}_{\{(z^{m+1},\ldots,z^{m+M}) \mbox{ is a perfect match}\}} 
 \times  \prod_{i=1}^{I}
{\alpha_{c_i}}^{\# c_i} \prod_{j=1}^{J} {\beta_{s_j}}^{\# s_j}
\end{eqnarray*}
where $\kappa_M=1\big/ P( \mbox{having a perfect match of length $M$})$.
\end{proof}

\begin{corollary}
\label{thm.symmetric} 
Consider the FCFS directed matching of $\ldots,z^{-1},z^0,z^1,\ldots$,  let  $z^{m+1},\ldots,z^{m+M}$  be the block of  customers and servers in positions $[m+1,m+M]$, which has perfect matching, and let $\tz^{m+1},\ldots,\tz^{m+M}$ be its exchange transformation.  Replace $z^{m+1},\ldots,z^{m+M}$ by $\tz^{m+M},\ldots,\tz^{m+1}$. Then $\tz^{m+1},\ldots,\tz^{m+M}$ will be a perfectly matched block in the new directed matching of the complete sequence, and:
\[
P(\tz^{m+1},\ldots,\tz^{m+M}) = P(z^{m+1},\ldots,z^{m+M}).
\]
\end{corollary}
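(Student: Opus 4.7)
The plan is to combine the two preceding lemmas: Lemma \ref{thm.reverse} will supply the perfect-matching claim for the replacement block, while Lemma \ref{thm.blocks} will supply the equality of probabilities via invariance of the type multiset.

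For the perfect-matching claim, I would invoke Lemma \ref{thm.reverse} directly. Applied to the perfect block $z^{m+1},\ldots,z^{m+M}$, it says that the reversed exchange $\tz^{m+M},\ldots,\tz^{m+1}$ is itself a perfect block, meaning that internal FCFS directed matching pairs every customer in it to a later compatible server within the block. This reversed exchange is precisely what occupies positions $m+1,\ldots,m+M$ after the substitution. Because the original block was self-contained (every match involving one of its positions was internal to it), the matches made by FCFS outside $[m+1,m+M]$ do not depend on the block's contents and are unaffected by the substitution; and because the new contents again form a perfect block, FCFS inside $[m+1,m+M]$ again matches every customer internally. Hence the new block is perfectly matched in the new directed matching of the complete sequence.

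For the equality of probabilities, the key observation is that the exchange transformation preserves the multiset of symbols across a self-contained perfect block: each matched pair $(c_i,s_j)$ contributes one $c_i$ and one $s_j$, and after the swap the pair contributes the same two symbols at the swapped positions; any unmatched servers are fixed by the exchange and likewise preserved. The subsequent reversal is a permutation of positions and trivially preserves multisets. Therefore the counts $\#c_i$ and $\#s_j$ in the replacement block agree with those in the original. By Lemma \ref{thm.blocks}, the conditional probability of seeing a given perfect block of length $M$ is $\kappa_M\prod_i \alpha_{c_i}^{\#c_i}\prod_j \beta_{s_j}^{\#s_j}$, depending only on $M$ and on these counts; the unconditional i.i.d.\ probability of observing a specific string equals the same product without $\kappa_M$. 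In either case the two probabilities agree, giving $P(\tz^{m+1},\ldots,\tz^{m+M}) = P(z^{m+1},\ldots,z^{m+M})$.

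The main obstacle is notational rather than substantive: one must verify carefully that the exchange transformation, which turns a matched customer into a server of its matched partner's type (and vice versa), nevertheless preserves the overall multiset of symbols in the block, and that Lemmas \ref{thm.reverse} and \ref{thm.blocks} are both being applied under the same self-contained reading of ``perfect block''. Once these two points are pinned down, the corollary follows immediately from the two lemmas.
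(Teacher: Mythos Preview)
Your proposal is correct and follows exactly the paper's own route: the perfect-matching claim is Lemma \ref{thm.reverse}, and the probability equality is Lemma \ref{thm.blocks} together with the observation that the exchange preserves the multiset of types in the block. The paper's proof is a two-line citation of these two lemmas; your version simply expands the details.
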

\begin{proof}
That $\tz^{m+1},\ldots,\tz^{m+M}$ is a FCFS directed perfectly matched  block follows from Lemma \ref{thm.reverse} and that $P(\tz^{m+M},\ldots,\tz^{m+1}) = P(z^{m+1},\ldots,z^{m+M})$ follows from Lemma \ref{thm.blocks}.
\end{proof}

We now assume that the system is ergodic, i.e. the Markov chains $U(n)$ is ergodic (which holds if and only if the stability condition (\ref{eqn.stability}) holds).
We have shown in Theorem \ref{thm.infbackmatch1} that for i.i.d  sequences of servers and customers $\ldots,z^{-1},z^0,z^1,\ldots$, under ergodicity,  there exists a.s. a unique FCFS directed matching, which corresponds to the stationary version of the Markov chain $U$ (generated by the Loynes' construction).    We now define the following augmented Process, with paths $\fp$ where the state consists of $\fp(n)=(U(n),z^n,v_n)$, where $v_n$ records the location of the element that is matched with $z^n$, i.e. if $z^n=c_i$ is matched to $z^m=s_j$ where $m>n$ then $v_n=m$, if $z^n=s_j$ is matched to $z^m=c_i$ where $m<n$ then $v_n=m$, and if $z^n=s_j$ is unmatched then $v_n=n$.
The path $\fp$ is uniquely determined by the i.i.d. sequence $\ldots,z^{-1},z^0,z^1,\ldots$.  We denote by $\fP$ be the probability distribution of the paths $\fp$.   We now define paths $\psi\fp$ by the exchange transformation followed by time reversal:  To $\fp(n)=(U(n),z^n,v_n)$  we define $\psi\fp(-n)=(\tilde{U}(-n),\tz^{-n},v_{-n})$ where $\tz^{-n} = z^{v_n}$, $\tilde{v}_{-n}=-v_n$, and $\tilde{U}(m)$ for every $m$ is defined as the customers that are unmatched, and the servers that are unmatched starting from the last unmatched customer, in the sequence of $\tz^r,\tilde{v}_r$, $r\le m$ (i.e. it behaves just like $U(n)$, but is obtained from the given matching that is already determined by $\tz,\tilde{v}$).  For every path $\fp$ there is a transformed path $\psi\fp$.  

We denote by $\psi\fP$ the distribution of the transformed paths $\psi\fp$.   
Our goal is to show that $\psi\fP=\fP$.

Let $\mathfrak{P}^0$ be the Palm version of the measure $\mathfrak{P}$, with respect to the state $\emptyset$ of $U$,  that is, $\mathfrak{P}^0$ is the law of $(U(n),z^n,v_n)$ conditioned on the event $\{U(0)=\emptyset\}$. 
A realization of a process of law $\mathfrak{P}^0$ can be obtained by considering a bi-infinite sequence of  perfectly matched blocks of i.i.d customers and servers.  Denote by $O(m),\,m=0,\pm 1, \pm 2,\ldots$ a sequence of i.i.d. minimal perfectly matched blocks.  Then the resulting paths of these will have the distribution $\fP^0$.  
Now perform the exchange transformation on this sequence, followed by time reversal, and let $\psi\fP^0$ be the probability distribution of the result. To $\psi\fP^0$, correspond $\psi\fP$ which is the stationary version of $\psi\fP^0$. 
Now according to Lemma \ref{thm.reverse}, we have $\psi\fP^0 =\fP^0$. Since $\psi\fP$ is the stationary version of $\psi\fP^0$ and since $\fP$ is the stationary version of $\fP^0$, we deduce that $\psi\fP=\fP$. 

The key  argument to show this, is the link between time-stationarity and event-stationarity.  For general background on Palm calculus, see for instance Chapter 1 in \cite{BaccelliB:03}.   So we obtain the following result. 

\begin{proposition}
Consider a FCFS directed matching model under the stability condition (\ref{eqn.stability}). 
Let  $\ldots,z^{-1},z^0,z^1,\ldots$ be the independent i.i.d. sequence of customers and servers, with the  unique FCFS matching between them.  Then the exchanged sequence 
$\ldots,\tz^{-1},\tz^0,\tz^1,\ldots$ is also  i.i.d. of the same law.
Furthermore, the FCFS directed matching for the exchanged sequence in reversed time, consists of the same links as the matching of the original sequence.
\end{proposition}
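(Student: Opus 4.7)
The plan is to decompose the bi-infinite sequence at the renewal times where the augmented Markov chain $U(n)$ returns to the empty state $\emptyset$, which exist almost surely by ergodicity (Theorem \ref{thm.embedded}). Between two consecutive visits to $\emptyset$, the segment of $z$-values forms a minimal perfectly matched block, and under the Palm measure $\fP^0$ (obtained by conditioning on $U(0)=\emptyset$), these minimal perfect blocks are i.i.d.\ because the underlying sequence $\ldots,z^{-1},z^0,z^1,\ldots$ is i.i.d.\ and the block boundaries are stopping times for the Markov chain $U$.

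Next I would show that the exchange-then-reverse transformation $\psi$ preserves $\fP^0$. Lemma \ref{thm.reverse} tells us that applying the exchange transformation to a perfect block and then reading it in reversed time again yields a FCFS directed perfectly matched block with the same links. Lemma \ref{thm.blocks} (equivalently Corollary \ref{thm.symmetric}) tells us that the probability of observing a given perfect block of length $M$ depends only on the multiset of customer and server types in it, so the original block and its exchange-reverse have identical probability. Since $\psi$ acts blockwise on the bi-infinite sequence of i.i.d.\ perfect blocks and sends it to another bi-infinite sequence of i.i.d.\ perfect blocks with the same marginal law, we conclude $\psi\fP^0=\fP^0$.

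Finally I would invoke the standard correspondence between Palm and time-stationary measures. Because $\fP$ is the unique time-stationary extension of $\fP^0$ and $\psi\fP$ is the unique time-stationary extension of $\psi\fP^0$, equality of the Palm measures lifts to $\psi\fP=\fP$. Reading off the coordinates of a path $\fp(n)=(U(n),z^n,v_n)$, this yields that the exchanged sequence $(\tz^n)$ has the same i.i.d.\ law as $(z^n)$, and that the matching links under $\psi$ coincide with the time-reversed links of the original matching, which is the content of the two assertions.

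The main obstacle is the Palm-to-stationary lift: one must check that the block decomposition really produces i.i.d.\ blocks under $\fP^0$ (so that the blockwise action of $\psi$ is well defined as a measurable invertible transformation), and that $\psi$ commutes with the inverse Palm construction. Once this standard piece of Palm calculus is in place, the combinatorial core of the argument is carried entirely by Lemma \ref{thm.reverse} and Lemma \ref{thm.blocks}, which were proved above, so no further hard work on the matching structure should be needed.
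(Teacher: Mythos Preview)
Your proposal is correct and follows essentially the same route as the paper: decompose under the Palm measure $\fP^0$ into i.i.d.\ minimal perfect blocks, use Lemma~\ref{thm.reverse} together with Lemma~\ref{thm.blocks} (Corollary~\ref{thm.symmetric}) to see that $\psi$ preserves the law of each block and hence $\psi\fP^0=\fP^0$, and then lift via the Palm--stationary correspondence to $\psi\fP=\fP$. If anything, you are slightly more explicit than the paper, which cites only Lemma~\ref{thm.reverse} at the final step even though the equal-probability statement of Lemma~\ref{thm.blocks} is clearly also needed.
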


\begin{proof}[Proof of Theorem \ref{thm.infbackmatch2}]
That $\ldots,\tz^{-1},\tz^0,\tz^1,\ldots$ is an i.i.d. sequence follows from the identity of $\psi\fP$ and  $\fP$.  That the Loynes' construction in reversed time will use the same links follows, since the links of $\ldots,\tz^{-1},\tz^0,\tz^1,\ldots$ give a set of links which are the  FCFS directed matchings in reversed time between  $\ldots,\tz^{-1},\tz^0,\tz^1,\ldots$,  and by Theorem \ref{thm.infbackmatch1} this matching is unique.
\end{proof}

\section{Proof of Theorem \ref{thm.N-comparison}, on Comparison of `N'-system Policies}
\label{sec.N-comparison}
We will prove the stronger result:
\begin{proposition}
\label{thm.N-comparisontransient}
Starting from an empty system, for every $t>0$ the following holds:
\begin{equation}
\label{eqn.comparison}
N_1^r(t) + N_2^r(t) \ge_{ST}  N_1^q(t) + N_2^q(t) -1. 
\end{equation}
\end{proposition}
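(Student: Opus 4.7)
The plan is to prove the stronger pathwise bound
\[
N^q_1(t)+N^q_2(t)\le N^r_1(t)+N^r_2(t)+1\quad\text{for all }t\ge 0
\]
in the coupled system described at the start of Section \ref{sec.N-model}, which immediately yields the stochastic ordering (\ref{eqn.comparison}). Label customers by arrival index and let $\mathcal{I}^q(t)$, $\mathcal{I}^r(t)$ be the sets of labels currently present in the FCFS-ALIS and Redundant Service copies. Since the two copies share the four driving Poisson processes and start empty, the quantity $\Delta(t):=|\mathcal{I}^q(t)|-|\mathcal{I}^r(t)|$ is piecewise constant between Poisson epochs and unchanged at every arrival, so it suffices to control the jumps at server-clock events. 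At a server-$j$ clock $\Delta$ changes by $\mathbf{1}_{E^r_j}-\mathbf{1}_{E^q_j}$, where $E^q_j$ is the event that server $j$ is busy in the FCFS-ALIS copy and $E^r_j$ is the event that the Redundant copy has a departure through server $j$ (any customer for $j=2$, any type-$1$ customer for $j=1$).

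Induction on event epochs will show $\Delta(t)\le 1$ provided that, at any epoch at which $\Delta=1$ just before, the dangerous configuration $E^q_j=0$, $E^r_j=1$ cannot occur. For $j=2$ this is immediate: $\Delta=1$ with a non-empty Redundant copy gives $|\mathcal{I}^q|\ge 2$, and in the `N'-model any FCFS-ALIS state with two or more customers has server 2 busy, since server 1 can host at most one type-$1$ customer and ALIS would never leave a waiting customer while server 2 is idle.

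The main obstacle is the server-$1$ tick, and the strategy is to propagate a structural coupling invariant stronger than $\Delta\le 1$: at every time $t$ the set $\mathcal{I}^q(t)\setminus \mathcal{I}^r(t)$ contains at most one label, and when non-empty this label belongs to a customer currently in service by some server of the FCFS-ALIS copy. Granting this invariant, the server-$1$ case closes as follows. If $\Delta=1$ and server 1 is idle in FCFS-ALIS, the unique extra must sit at server 2 of FCFS-ALIS, because extras are in service and server 1 is idle. Under FCFS-ALIS an idle server 1 forces the queue to contain no type-$1$ customer, so the only potential type-$1$ in $\mathcal{I}^q$ is the extra itself. But $\Delta=1$ together with a single label in $\mathcal{I}^q\setminus\mathcal{I}^r$ forces $\mathcal{I}^r\subseteq\mathcal{I}^q$, and the extra is by definition not in $\mathcal{I}^r$, so every type-$1$ customer of $\mathcal{I}^r$ would have to lie in $\mathcal{I}^q\setminus\{\text{extra}\}$, a set free of type-$1$ customers. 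Hence $\mathcal{I}^r$ contains no type-$1$, $E^r_1$ fails, and $\Delta$ does not increase.

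The invariant itself is verified by induction on events. Arrivals preserve it trivially since the newcomer joins both copies and does not displace any in-service customer. Server-$j$ ticks are handled by comparing, via the FCFS-ALIS priority rule, the customer departing FCFS-ALIS with the oldest Redundant customer of the appropriate class. A short age argument using FCFS-ALIS shows that whenever the oldest Redundant customer of the appropriate class is also in $\mathcal{I}^q$ it must occupy a service slot and not the queue, since a queue position older than every in-service customer would have violated an earlier FCFS-ALIS assignment. Consequently a tick either removes the same label from both copies, or simultaneously destroys the old extra and installs a new one into a service slot, so a second simultaneous extra is never created. The principal technical effort in the proof is the exhaustive case split verifying the invariant for each combination of ``FCFS-ALIS departure in or not in $\mathcal{I}^r$'' and ``Redundant departure in or not in $\mathcal{I}^q$'' at each of the two server clocks.
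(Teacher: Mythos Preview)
Your coupling strategy is valid and leads to a correct proof, but it is genuinely different from the paper's route. The paper does \emph{not} track the set $\mathcal{I}^q\setminus\mathcal{I}^r$; instead it fixes each type-$2$ customer $z^n$ and proves by induction on events the positional inequality $L^r_T(n)\ge L^q_T(n)-1$ (its queue position in the Redundancy copy is at most one smaller than in the FCFS-ALIS copy). From that invariant the paper then deduces $N^q(T)-N^r(T)\le 1$ as a consequence (and in fact uses the implication in both directions inside the induction). Your invariant, by contrast, gives $N^q-N^r\le |\mathcal{I}^q\setminus\mathcal{I}^r|\le 1$ immediately, at the price of also carrying the clause ``the unique extra, if any, is currently in service in the FCFS-ALIS copy.'' Both approaches require an exhaustive case split over the two server clocks; the paper's split is organised around the position of type-$2$ customers, yours around who departs from each copy.

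One point in your sketch needs repair. You assert that the oldest redundant customer $b$ of the appropriate class, if present in $\mathcal{I}^q$, must already sit in a service slot \emph{before} the tick, because ``a queue position older than every in-service customer would have violated an earlier FCFS-ALIS assignment.'' That is not true as stated: take $\mathcal{I}^q=\{e,b\}$ with the extra $e$ at server~$2$ and $b$ (younger than $e$) waiting; here $b$ is the oldest in $\mathcal{I}^r$ yet waits in the FCFS-ALIS queue, and there is no FCFS violation since $e$ is older. What does hold, and what you actually need, is that \emph{after} the tick the new extra is in service: if the departing FCFS-ALIS customer equals the old extra $e$, then server~$j$ becomes free and picks up the oldest waiting compatible customer, and by the induction hypothesis every waiting FCFS-ALIS customer lies in $\mathcal{I}^r$, so $b$ is the oldest such and is picked up; if instead the departing FCFS-ALIS customer $a\ne e$, then $a\in\mathcal{I}^r$ and a short argument (tracing when server~$j$ last picked a customer) forces either $b=a$ or $b\notin\mathcal{I}^q$, so no second extra is created. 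With this correction your invariant goes through and yields the pathwise bound.
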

This will by ergodicity prove that Theorem \ref{thm.N-comparison} holds.
\begin{proof}[Proof of Proposition \ref{thm.N-comparisontransient}]
We consider the coupled systems.  We refer to the system under the Redundancy policy as system-r and to the system under FCFS-ALIS as system-q.  We assume that system-q and system-r share a sequence of events $z^1,z^2,\ldots$, where $z^n$ take the values $c_1,c_2,s_1,s_2$ to denote arrival of either type or potential service completion of either type, and $z^n$ are i.i.d. with probabilities:  
\[
P(Z^n=c_1)=\frac{\lambda_1}{\rlambda+\rmu},\quad P(Z^n=c_2)=\frac{\lambda_2}{\rlambda+\rmu},\quad P(Z^n=s_1)=\frac{\mu_1}{\rlambda+\rmu},\quad P(Z^n=s_2)=\frac{\mu_2}{\rlambda+\rmu}.
\]
To simplify notation we let $N^h(T)=N^h_1(T)+N^h_2(T)$ where $h$ is either $r$ or $q$, and $T=1,2,\ldots$ is the $T$th event, and we are always regarding the state at event $T$ to be the state after the $T$th event happened.

We define the state of the two systems, at event $T$ by:
\[
Y^h(T) = (y^h_{-1}(T),y^h_{0}(T),y^h_1(T),\ldots,y^h_L(T))
\]
where 
\[
\begin{array}{ll}
y^h_{-1}(T) = n & \mbox{if $z^n$ is served at server 1 after event $T$}, \\
y^h_0(T) = n & \mbox{if $z^n$ is  served at server 2 after event $T$}, \\
y^h_j(T) = n & \mbox{if $z^n$ is  waiting in position $j$ in queue after event $T$, $j=1,\ldots,L$}, \\
y^h_{-1}(T) =0 & \mbox{if server 1 is idle after event $T$}, \\
y^h_0(T) =0 & \mbox{if server 2 is idle after event $T$}, \\
\end{array}
\]
where $h$ is either $r$ or $q$.  
This state simply tells us which customers are in service, and which customers are in the ordered queue.  Note that $y^h_{-1}(T)=n$ then $z^n=c_1$, because server 1 only serves customers of type 1.  Also, in system-r it is possible to have $y^r_{-1}(T)=y^r_0(T)=n$ if  $z^n=c_1$ and this customer is served simultaneously by both servers.

We now define at event $T$ the location in the systems for customers of type 2.  If $z^n=c_2,$ for $n\le T$, then
\[
L_T^h(n) = \left\{ \begin{array}{ll}
j & \mbox{if } y^h_j(T) = n \\  -1 & \mbox{if customer $n$ has  departed before or at event $T$} 
\end{array} \right.
\]
To prove the proposition  we will prove the following:

{\bf Statement:}  For all $T$ and $n\le T$ such that $z^n=c_2$,  
\begin{equation}
\label{eqn.statement}
L_T^r(n) \ge L_T^q(n) -1.
\end{equation}
that is, after every event, the difference in location of a customer of type 2 in system-q minus its position in system-r is always less then or equal to 1. 

We prove this statement by induction on $T$.  It is certainly true for $T=0$, when the systems are empty.  We now assume that the statement holds at all the events $T'<T$.  We need to check that it still holds at event $T$.  We will check it for each of the four types of events.

{\em Case 1:}  event $T$ is an arrival of a customer of type 1.

An arrival of a customer of type 1 does not change the location of any of the type 2 customers so the statement holds at $T$.

{\em Case 2:}  event $T$ is a potential service completion by server 2.

If prior to the event server 2 in system-q was busy, there is a departure from system-q, so positions of all type 2 customers in system-q decrease by 1 and the statement remains true for $T$.

If prior to the event server 2 in system-q was idle, than before and after that event  system-q contained no customers of type 2,  and therefore the statement holds trivially.

{\em Case 3:}  event $T$ is an arrival of a customer of type 2.

We will show that the induction hypothesis implies that before event $T$, the total number waiting for service in system-q is at most one more than in system-r.  This implies that when in event $T$ a customer of type 2 arrives his location at the end of the queue in system-q and in system-r satisfies the statement.

We will first show that the induction hypothesis implies that before event $T$, for $T'<T$, $N^q(T')-N^r(T')\le 1$.
To show that, we will show that if for some $T'-1<T-1$ we already have $N^q(T'-1)-N^r(T'-1) = 1$, then, by the induction hypothesis (\ref{eqn.statement}) for $T'$, the difference cannot grow at the time of event $T'$.

The difference can only grow if at the next potential service completion a customer leaves from system-r but does not leave from system-q.  If the potential service completion is of server 2, then if no customer leaves from system-q it means that $N^q(T'-1)\le 1$ and if a customer leaves from system-r it means that $N^r(T'-1) \ge 1$, which contradicts $N^q(T'-1)-N^r(T'-1) = 1$.

  We now need to consider potential service completions at server 1.  We need to check two situations:

\begin{compactitem}[-]
\item
 Prior to  event $T'$, server 1 and server 2 in system-r are serving a customer of type 1, and server 1 in system-q is idle.  In this case both servers in system-r are serving the same customer.   In system-q at that time there must be at least 2 customers, so there must be $k\ge1$ customers waiting, and all of them are of  type 2, since server 1 is idle.  Since the difference $N^q(T'-1)-N^r(T'-1) = 1$, we must have that in system-r there are only $k-1$ customers waiting.  But then one of the type 2 customers waiting in system-q must have already departed in system-r, which violates the induction hypothesis.
\item
Prior to  event $T'$, in system-r server 1 is serving a customer of type 1 and server 2 is serving a customer of type 2,  and server 1 in system-q is idle.  
 In system-q at that time there must be $k\ge2$ customers waiting, and all of them are of  type 2.  Since the difference $N^q(T'-1)-N^r(T'-1) = 1$, we must have that in system-r there are only $k-2$ customers waiting, so there is a total of no more than $k-1$ customers of type 2 in system-r.  But then (as in the previous situation) one of the type 2 customers waiting in system-q must have already departed in system-r, which violates the induction hypothesis.
\end{compactitem}
 
 We have then shown that for $T'<T$, total numbers in the systems satisfy  $N^q(T')-N^r(T')\le 1$.   We now show that for $T'<T$ the total number waiting in system-q minus total waiting in system-r cannot be more than 1.   We will show that this holds at event  $T'$ given the induction hypothesis for $T'<T$, and also given that $N^q(T')-N^r(T') \le 1$, which we just proved.   Assume to the contrary that at $T'$, there are $k+2$ customers waiting in system-q, and at most $k$ customers waiting in system-r.  Then by $N^q(T')-N^r(T') \le 1$,  there  must be 2 customers in service in system-r, and in system-q server 1 is idle, and all $k+2$ customers waiting in system-q must be of type 2.  But then one of the type 2 customers waiting in system-q must have already departed in system-r, which violates the induction hypothesis.

{\bf Note:} In our proof of Case 3, we have shown that:
\begin{equation}
\label{eqn.induction}
L_T^r(n) \ge L_T^q(n) -1  \quad \Longrightarrow  N^q(T) - N^r(T) \le 1
\end{equation}

{\em Case 4:}  Event $T$ is a potential service completion by server 1.

There are three situations to consider:

{\em Case 4-A:}  Prior to event $T$ server 1 in system-r is idle.  In that case there is no departure from system-r, and the statement holds at $T$.  

{\em Case 4-B:}  Prior to event $T$ server 1 in system-r  is serving a customer of type 1, and server 2 in system-r is serving a customer of type 2.  We only need to consider the following situation, for an arbitrary customer $z^n=c_2$.     Prior to event $T$ customer $z^n$ is waiting for service both in system-q and in system-r,  and $L^q_{T-1}(n) - L^r_{T-1}(n) =1$.  
Otherwise the locations of $z^n$ in the two system after event $T$ cannot differ by more than 1.  Furthermore, we only need to consider the situation where there is at least one customer $z^m=c_1$ with $m<n$ waiting in system-r, and  all customers waiting in system-q before $z^n$ are of type 2.  Otherwise, if there is no type 1 customer waiting in system-r, the location of $z^n$  in system-r will not change at $T$, and if there is a customer of type 1 waiting in system-q in a location before $z^n$ then the location of customer $z^n$ in system-q will decrease by 1 at $T$.

So we now assume that $L^q_{T-1}(n) - L^r_{T-1}(n) =1$ and there is at least one customer $z^m=c_1$ with $m<n$ waiting in system-r, and  all customers waiting in system-q before $z^n$ are of type 2.
In that case, there are $L^q_{T-1}(n) -1$ type 2 customers in queue before $z^n$ in system-q, and at most 
$L^q_{T-1}(n) -3$ customers of type 2 in queue before $z^n$ in system-r.  We already observed that in that case at least one of the type 2 customers waiting in system-q prior to $T$ must have already departed from system-r, which contradicts the induction hypothesis.

{\em Case 4-C:}  Prior to event $T$ server 1 and server 2 in system-r  are both serving simultaneously the same customer of type 1.  We now need to consider two sub-cases.

{\em Case 4-C(i):} Prior to event $T$, all the customers waiting in system-q are of type 2.
Let these customers be $z^{n_1}=c_2,z^{n_2}=c_2,\ldots,z^{n_k}=c_2$.  Their locations in sytem-q are $L_{T-1}^q(n_j)=j,\,j=1,\ldots,k$.   Then $z^{n_1}$ must also be present in system-r, or else the induction hypothesis is violated.  But then also all of $z^{n_j}=c_2,\,j=1,\ldots,k$ must be present in system-r, by FCFS.  Furthermore, $z^{n_1}$ is not in serivce in systme-r, so $L_{T-1}^r(n_1)\ge 1 = L_{T-1}^q(n_1)$,  and because system-q has no waiting customers of type 1, while system-r may have customers  of type 1, it follows that for $j=2,\ldots,k$
\begin{equation}
\label{eqn.diff}
L_{T-1}^r(n_j) - L_{T-1}^q(n_j) \ge L_{T-1}^r(n_1) - L_{T-1}^q(n_1) + \mbox{number of type 1 customers in system-r, between $z^{n_1}$ and $z^{n_j}$}
\end{equation}
If at event $T$ only one customer of type 1 enters service simultaneously in both servers, the statement (\ref{eqn.statement}) still holds at $T$.  If two customers enter service,  then they must be the first waiting customer which is of type 2, and the first waiting customer of type 1.   Then in system-r the locations of all customers waiting before the first customer of type 1 decrease by 1, and the locations of all customers waiting after the first customer of type 1 decrease by 2.
It  is then easy to see by (\ref{eqn.diff}) that after event $T$ the statement (\ref{eqn.statement})  still holds.

{\em Case 4-C(ii):}  Prior to event $T$, there is at least one waiting customer of type 1  in system-q.  

Consider customer $z^n=c_2$ that is waiting in system-q, prior to event $T$.  Note first that because  server 2 is serving a customer of type 1 in system-r, by the induction hypothesis, 
customer $z^n$ must also be waiting in system-r, and   $L^q_{T-1}(n) - L^r_{T-1}(n) \le 1$.
We need to show that after event $T$ we still have $L^q_{T}(n) - L^r_{T}(n) \le 1$.
The difficulty now is that at event $T$  two customers may enter service in system-r.   
We consider several possibilities.
\begin{compactitem}[-]
\item
Consider $L^q_{T-1}(n) - L^r_{T-1}(n) \le -1$.  Then at event $T$, even if in system-r two customers waiting in locations before $z^n$ enter service, and in system-q no customer in location before $z^n$ enters service, we still have $L^q_{T}(n) - L^r_{T}(n) \le 1$.
\item
Consider $L^q_{T-1}(n) - L^r_{T-1}(n) =0$.  Assume that at event $T$, in system-r, two customers waiting in locations before $z^n$ enter service, and in system-q no customer waiting in location before $z^n$ enters service, and  let $k=L^q_{T-1}(n)=L^r_{T-1}(n)$.  Then prior to event $T$,  in system-q all $k$ customers waiting up to location $k$ are of type 2, while in system-r, at least one is of type 1.  So one of the customers waiting in system-q must have already departed from system-r, which contradicts the induction hypothesis.  So this assumption leads to a contrdiction, and this confirms that if $L^q_{T-1}(n) - L^r_{T-1}(n) =0$, we still have $L^q_{T}(n) - L^r_{T}(n) \le 1$.
\item
Consider $L^q_{T-1}(n) - L^r_{T-1}(n)  =1$, let $k=L^q_{T-1}(n)$ so $L^r_{T-1}(n)=k-1$.  
We need to consider three situations:  

: Assume that prior to event $T$, in system-q there is no customer of type 1 waiting in a location before $z^n$. Then in system-q there are $k-1$ customers of type 2 in locations before $z^n$, and in system-r there are no more than $k-2$ customers of type 2 in locations before $z^n$, and in system-r there is no customer of type 2 in service,  which contradicts the induction hypothesis.

: Assume that prior to event $T$, in system-q there is at least one customer of type 1 waiting in a location before $z^n$, and in system-r there is no customer of type 1 waiting in a location before $z^n$. Then at event $T$ exactly one customer from location before $z^n$ enters service in both systems, so we still have $L^q_{T}(n) - L^r_{T}(n) \le 1$.

: Assume that prior to event $T$, there is at least one customer of type 1 waiting in a location before $z^n$ in both systems.  Then the customers in locations after $z^n$ must be the same customers in both systems, by FCFS.  In system-q there are 2 customers in service, since there is at least one customer of type 1 in the queue.  In system-r there is only one customer that is served simultaneously be both servers.  In system-q there are $k-1$ customers waiting in locations before customer $z^n$, and in system-r there are only $k-2$ customers waiting in locations before customer $z^n$.  But in that case, $N^q(T-1) = N^r(T-1) +2$.  This is in contradiction to our proof in Case C that by (\ref{eqn.induction})  $N^q(T-1) \le N^r(T-1) +1$.  
\end{compactitem}

This completes the proof of statement (\ref{eqn.statement}).

We now have from (\ref{eqn.induction}), since we have shown that statement (\ref{eqn.statement}) holds for all $T$, that also (\ref{eqn.comparison}) holds for all $T$. This completes the proof. 
\end{proof}

{\noindent \bf References}

\bibliographystyle{plain}
\bibliography{FCFSparBibliography} 

\end{document}